\documentclass[a4paper,11pt,leqno]{article}
\usepackage{geometry}
\geometry{a4paper,top=3cm,bottom=3cm,left=2.5cm,right=2.5cm, heightrounded,bindingoffset=5mm}
\usepackage{tikz,tikz-cd,amsmath,amsfonts,amssymb,graphicx}
\usetikzlibrary{matrix,arrows,decorations.pathmorphing}
\usepackage[latin1]{inputenc}
\usepackage{amsthm}
\usepackage[autostyle]{csquotes} 
\usepackage[english]{babel}
\usepackage{enumerate}
\usepackage[mathscr]{euscript}
\usepackage{mathtools}
\usepackage{hyperref}
\usepackage{url}
\usepackage{scalerel}
\usepackage{dsfont}
\usepackage{stackengine,wasysym}

\usepackage[noabbrev,nameinlink,capitalize]{cleveref} 

\numberwithin{equation}{section}
\theoremstyle{definition}

\newtheorem{theorem}[equation]{Theorem}
\newtheorem{corollary}[equation]{Corollary}
\newtheorem{definition}[equation]{Definition}
\newtheorem{lemma}[equation]{Lemma}
\newtheorem{example}[equation]{Example}
\newtheorem{proposition}[equation]{Proposition}
\newtheorem{remark}[equation]{Remark}
\newtheorem{construction}[equation]{Construction}
\newtheorem{notation}[equation]{Notation}

\makeatletter
\newcommand{\colim@}[2]{%
	\vtop{\m@th\ialign{##\cr
			\hfil$#1\Operator@font colim$\hfil\cr
			\noalign{\nointerlineskip\kern1.5\ex@}#2\cr
			\noalign{\nointerlineskip\kern-\ex@}\cr}}%
}
\newcommand{\colim}{%
	\mathop{\mathpalette\colim@{\rightarrowfill@\scriptscriptstyle}}\nmlimits@
}
\renewcommand{\varprojlim}{%
	\mathop{\mathpalette\varlim@{\leftarrowfill@\scriptscriptstyle}}\nmlimits@
}
\renewcommand{\varinjlim}{%
	\mathop{\mathpalette\varlim@{\rightarrowfill@\scriptscriptstyle}}\nmlimits@
}
\makeatother
\newcommand{\cate}[1]{\mathscr{#1}}

\newcommand{\Sh}[2]{\cate{S}\text{hv}({#1}; {#2})}
\newcommand{\PSh}[1]{\cate{PS}\text{hv}({#1})}

\newcommand{\Shsp}[1]{\cate{S}\text{hv}({#1})}

\newcommand{\sHom}[3]{\text{\underline{Hom}}_{\scalebox{1}{$\scriptscriptstyle #1$}}(#2, #3)}
\newcommand{\Hom}[3]{\text{Hom}_{\scalebox{1}{$\scriptscriptstyle #1$}}(#2, #3)}

\newcommand{\pf}[1]{#1_{\ast}}
\newcommand{\pb}[1]{#1^{\ast}}
\newcommand{\pfp}[1]{#1_{!}}

\newcommand{\pfs}[1]{#1_{\sharp}}

\DeclareMathOperator{\Fun}{Fun}
\DeclareMathOperator{\Pro}{Pro}
\DeclareMathOperator{\opposite}{op}
\DeclareMathOperator{\lex}{lex}

\newcommand{\op}{^{\opposite}}
\newcommand{\hyp}{^{\text{hyp}}}
\newcommand{\C}{\cate{C}}
\newcommand{\D}{\cate{D}}

\newcommand{\X}{\cate{X}}
\newcommand{\Y}{\cate{Y}}
\newcommand{\Top}{\cate{T}\mathrm{op}}
\newcommand{\Cat}{\cate{C}\mathrm{at}}

\newcommand{\Map}{\mathrm{Map}}
\newcommand{\Z}{\mathcal{Z}}

\newcommand{\An}{\mathrm{An}}

\newcommand{\sh}[1]{\mathrm{sh}(#1)}

\newcommand{\intFun}{\underline{\mathrm{Fun}}}

\providecommand{\infcat}{\Cat_{\scalebox{1}{$\scriptscriptstyle \infty$}}}

\newsavebox{\pullback}
\sbox\pullback{%
	\begin{tikzpicture}%
		\draw (0,0) -- (1ex,0ex);%
		\draw (1ex,0ex) -- (1ex,1ex);%
\end{tikzpicture}}
\newsavebox{\pushout}
\sbox\pushout{%
	\begin{tikzpicture}%
		\draw (0ex,0ex) -- (0ex,1ex);%
		\draw (0ex,1ex) -- (1ex,1ex);%
\end{tikzpicture}}
\tikzset{%
	symbol/.style={%
		draw=none,
		every to/.append style={%
			edge node={node [sloped, allow upside down, auto=false]{$#1$}}}
	}
}

{}

\title{Approximate fibrations in higher topos theory}
\author{Christian Kremer and Marco Volpe}

\begin{document}

\maketitle

\begin{abstract}
    The goal of this paper is to put the theory of approximate fibrations into the framework of higher topos theory. 
    We define the notion of an approximate fibration for a general geometric morphism of $\infty$-topoi, give several characterizations in terms of shape theory and compare it to the original definition for maps of topological spaces of Coram and Duvall. Furthermore, we revisit the notion of cell-like maps between topoi, and generalize Lurie's shape-theoretic characterization by giving a purely topos-theoretical proof.
\end{abstract}

\section{Introduction}

Approximate fibrations, introduced by Coram and Duvall in \cite{CoramDuvall}, are a class of maps between topological spaces enlarging that of proper Hurewicz fibrations. Informally, these are continuous functions satisfying a more relaxed homotopy lifting property, in the sense that lifts only exist up to an appropriate notion of closeness. We recall the precise definition in \cref{sec:approximate_fibrations}. They admit a surprisingly robust bundle theory: for instance, approximate fibrations between manifolds satisfy an $h$-principle \cite{HughesTaylorWilliams}, and those with manifold total space can be studied via surgery theory \cite{ChapmanApproximation}.

Approximate fibrations appear naturally in geometric topology. A key example comes from neighborhood theory. A theorem of Hughes-Taylor-Weinberger-Williams \cite{HTW} identifies germs of neighborhoods of locally flatly embedded submanifolds with approximate fibrations having manifold total space and spherical homotopy fiber. Approximate fibrations have also been used to recast Farrell's fibering theorem \cite{FarrellFibering}, which gives a $K$-theoretic obstruction to deforming a map $p\colon M^d \to S^1$ ($d\geq 6$) into a fiber bundle projection with manifold fiber. In fact, such a map is homotopic to a fiber bundle projection precisely when it is homotopic to an approximate fibration. More generally, while direct extensions of Farrell's theorem to arbitrary bases are difficult, one can still find complete $K$-theoretic obstructions to deforming a map with target an aspherical manifold to an approximate fibration, for example when the fundamental group is hyperbolic in the sense of Gromov \cite{FarrellLueckSteimle}.

A particularly notable subclass of approximate fibrations is given by cell-like maps. These are proper maps whose fibers have contractible shape \cite{lacher1969cell}. Cell-like maps are central to manifold topology. Chapman \cite{chapmantorsion} and West \cite{west1977mapping} used them to prove that every compact topological manifold has a preferred simple homotopy type, well defined up to homeomorphism. They are indispensable tools in the proofs of the generalized Sch\"onflies theorem \cite{Brown1960Schoenflies}, Cannon's double suspension theorem \cite{cannon1979shrinking}, and in the study of manifold factors. The guiding principle, initiated by Siebenmann \cite{SiebenmannCellular}, is that under dimension restrictions, cell-like maps between manifolds can be approximated by homeomorphisms. Finally, cell-like maps and decomposition theory played a decisive role in Freedman's work on the 4-dimensional Poincar\'e conjecture \cite{Freedman,DiscEmbedding}.

These examples suggest that many deep questions in geometric topology can be profitably reformulated in terms of cell-like maps and approximate fibrations. In \cite[Proposition 7.3.6.6]{lurie2009higher}, Lurie gave a topos-theoretic characterization of cell-like maps, recasting their definition in the modern language of $\infty$-topoi. In \cref{sec:recall_topoi} we recall the main notions and facts from higher topos theory that we rely on in this paper, and to formulate the main results in this introduction. The topos theoretic perspective on cell-like maps opens the possibility of re-deriving classical results, such as those of Chapman and West, in a purely categorical framework. This is carried out in an ongoing project of the second named author in collaboration with Maxime Ramzi and Sebastian Wolf, generalizing results previously announced in \cite{NikolausYoutube2023}.

The first contribution of this paper is to generalize Lurie's theorem to a broader setting, including a purely topos-theoretic proof (and generalization) of Lacher's characterization of cell-like maps in terms of hereditary shape equivalences \cite[Theorem 1.2]{lacher1969cell}.

\begin{theorem}\label{introthm:cell-like}
    Let $p:\X\rightarrow\Y$ be a proper geometric morphism. Assume that locally constant objects in $\X$ and $\Y$ are hypercomplete, and that the hypercompletion of $\Y$ has enough points. Then the following are equivalent.
    \begin{enumerate}[(1)]
        \item $p$ is a cell-like geometric morphism, i.e. $\pb{p}$ is fully faithful;
        \item $p$ is a hereditary shape equivalence, i.e. for each $U\in\Y$, the geometric morphism $p_{/U}:\X_{/U}\rightarrow\Y_{/U}$ is a shape equivalence;
        \item for each point $y:\An\rightarrow\Y$, the fiber of $p$ at $y$ has trivial shape.
    \end{enumerate}
\end{theorem}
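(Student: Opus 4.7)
The two implications out of $(1)$ are formal. For $(1)\Rightarrow(3)$, fix a point $y:\An\to\Y$ and form the pullback of $p$ along $y$, giving $p_y:F_y\to\An$ together with $y':F_y\to\X$. Properness of $p$ provides a proper base change equivalence $\pb{y}\pf{p}\simeq\pf{(p_y)}\pb{(y')}$. Applying $\pb{y}$ to the unit equivalence $F\xrightarrow{\simeq}\pf{p}\pb{p}F$ coming from $(1)$, we conclude that the unit $K\to\pf{(p_y)}\pb{(p_y)}K$ is an equivalence for every $K\in\An$ (every such $K$ arises as $\pb{y}F$ for $F$ the constant sheaf on $\Y$ at $K$). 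Hence $\pb{(p_y)}$ is fully faithful, i.e., $F_y$ has trivial shape. The implication $(1)\Rightarrow(2)$ is even more immediate: fully faithfulness of $\pb{p}$ is preserved under slicing at every $U\in\Y$, and a geometric morphism with fully faithful inverse image is automatically a shape equivalence.

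To close the circle I would establish $(3)\Rightarrow(1)$ by verifying that the unit $\eta_F:F\to\pf{p}\pb{p}F$ is an equivalence for every $F\in\Y$. Since $\Y\hyp$ has enough points, it suffices to check that $\eta_F$ becomes a stalkwise equivalence after hypercompletion. At any $y:\An\to\Y$, proper base change identifies $\pb{y}\pf{p}\pb{p}F$ with $\pf{(p_y)}\pb{(p_y)}(\pb{y}F)$; the sheaf $\pb{(p_y)}(\pb{y}F)$ is the constant sheaf on $F_y$ with value $\pb{y}F$, and by definition the global sections of a constant sheaf are given by mapping out of the shape. Hypothesis $(3)$ then identifies $\pf{(p_y)}\pb{(p_y)}(\pb{y}F)$ with $\pb{y}F$, so $\pb{y}\eta_F$ is an equivalence. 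The assumption that locally constant objects in $\Y$ are hypercomplete is then invoked to promote this equivalence from $\Y\hyp$ back to $\Y$, yielding fully faithfulness of $\pb{p}$.

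The principal obstacle is the final descent from $\Y\hyp$ to $\Y$. Without the hypercompleteness of locally constant sheaves one would only recover a hypercomplete analogue of cell-likeness, which is essentially the content of Lurie's original formulation in \cite[Proposition 7.3.6.6]{lurie2009higher}; the role of the extra hypothesis is to render shape theory transparent to hypercompletion, so that a stalkwise criterion in $\Y\hyp$ suffices to detect the equivalence $\eta_F$ already in $\Y$. A secondary subtlety is the identification of $\pb{(p_y)}(\pb{y}F)$ with a genuinely constant sheaf on $F_y$: this is a base change statement for constant-diagram functors which I would verify first for locally constant $F$, where it is controlled by monodromy, and then extend to arbitrary $F$ by a colimit decomposition.
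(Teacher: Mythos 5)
Your implications $(1)\Rightarrow(2)$ and $(1)\Rightarrow(3)$ are fine and essentially match the paper ($(1)\Rightarrow(3)$ is proper base change exactly as in the paper's \cref{rmk:relsheqimpliesptwisesheq}; your slicing argument for $(1)\Rightarrow(2)$ is a slightly more direct route than the paper's reduction to constant objects via \cref{hershapeeqiffrelunitinvonconstants}). However, there are two genuine problems. First, the logical structure does not close: you prove $(1)\Rightarrow(2)$, $(1)\Rightarrow(3)$ and $(3)\Rightarrow(1)$, so condition $(2)$ is never shown to imply anything. You still owe an argument for $(2)\Rightarrow(1)$ (or $(2)\Rightarrow(3)$), and this is in fact the hardest direction in the paper.

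Second, your $(3)\Rightarrow(1)$ step has a gap at exactly the point you flag as the ``principal obstacle,'' and the hypothesis you invoke does not repair it. You propose to verify the unit $\eta_F\colon F\to\pf{p}\pb{p}F$ for \emph{every} $F\in\Y$ by a stalkwise check. But the stalkwise criterion (enough points of $\Y\hyp$) only detects equivalences between \emph{hypercomplete} objects of $\Y$, and for arbitrary $F$ neither $F$ nor $\pf{p}\pb{p}F$ is known to be hypercomplete; the standing hypothesis only covers \emph{locally constant} objects. So the stalkwise argument can at best show that $\eta_F$ is $\infty$-connective, i.e.\ an equivalence after hypercompletion, which recovers only a hypercomplete variant of cell-likeness. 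The paper's proof of \cref{thm:proper.relsheqiffhersheq} supplies the missing bridge, and it is precisely where properness enters non-formally: since $p$ is proper, the $\Y$-functor $\pf{P}\pb{P}$ preserves internally filtered colimits, and $\Omega_{\Y}$ is compactly generated by the internally compact objects, which are the locally constant objects with compact values (\cref{prop:charinternallycpt=locconstcptvalue}, \cref{prop:omegacptlygen}); hence it suffices to check $\eta_F$ on (étale-locally) constant $F$, where your stalkwise argument is legitimate because constant objects and their images under $\pf{p}\pb{p}$ are hypercomplete by hypothesis. Without this reduction the proof does not go through. (A minor remark: your ``secondary subtlety'' is a non-issue — $\pb{(p_y)}(\pb{y}F)$ is the constant sheaf on the fiber by mere functoriality of pullback along $\An\xrightarrow{y}\Y$, with no monodromy or colimit decomposition needed.)
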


After passing to the associated sheaf $\infty$-topoi, the third condition in \cref{introthm:cell-like} corresponds to Lacher's original definition of a cell-like mapping of topological spaces. 
The assumption on locally constant objects is satisfied, for example, by topoi of the form $\Sh{X}{\An}$, where $X$ is either a locally compact ANR or a paracompact space of finite covering dimension. For any topological space $X$, the hypercompletion of $\Sh{X}{\An}$ has enough points (see \cref{rmk:Xhypenoughpoints}).

The second and more novel contribution is to provide a topos-theoretic characterization of approximate fibrations, entirely parallel to Lurie's characterization of cell-like maps. We make use of the framework of \textit{internal higher categories} developed by Martini-Wolf \cite{martini2025presentabilitytopoiinternalhigher,Martini2023a} to introduce the concept of an approximate fibration in the setting of geometric morphisms of $\infty$-topoi. A proper geometric morphism $f \colon \X \to \Y$ is called an \emph{approximate fibration} if, for every $U \in \Y$, the functor $\pf{{f_{/U}}} \pb{{f_{/U}}} \colon \Y_{/U} \longrightarrow \Y_{/U}$ is corepresentable, subject to additional compatibilities detailed in \cref{subsec:approximate_fibrations}. A reader familiar with shape theory may view the definition of approximate fibrations as a rigorous way to formulate the requirement that the shape of $\X$ be constant, in a manner parametrized by $\Y$. We provide a more intuitive reformulation of approximate fibrations in terms of shape theory, under the assumption that $\X$ and $\Y$ are locally contractible $\infty$-topoi.

\begin{theorem}
    Let $f:\X\rightarrow\Y$ be a proper geometric morphism. Assume that $\X$ and $\Y$ are locally contractible and that the hypercompletion of $\Y$ has enough points. Then the following are equivalent.
    \begin{enumerate}[(1)]
        \item $f$ is an approximate fibration;
        \item $f$ is a shape fibration, i.e. for each $U\in\Y$, the map $\sh{\X_{/\pb{f}U}}\rightarrow\sh{\Y_{U}}\times_{\sh{\Y}}\sh{\X}$ is an equivalence;
        \item $f$ is a shape quasi-fibration, i.e. for each point $y:\An\rightarrow\Y$, the sequence $\sh{\X_y}\rightarrow\sh{\X}\rightarrow\sh{\Y}$ is a fiber sequence.
    \end{enumerate}
\end{theorem}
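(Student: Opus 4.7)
The plan is to prove the cycle $(1) \Rightarrow (2) \Rightarrow (3) \Rightarrow (1)$. Throughout, write $\pi_\X \colon \X \to \An$ and $\pi_\Y \colon \Y \to \An$ for the essentially unique geometric morphisms. Local contractibility produces left adjoints $(\pi_\X)_!$ and $(\pi_\Y)_!$ to $\pb{\pi_\X}$ and $\pb{\pi_\Y}$ respectively, with the concrete identification $\sh{\X_{/V}} \simeq (\pi_\X)_!(V)$ for every $V \in \X$, and analogously for $\Y$.

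For $(1) \Rightarrow (2)$, I would unwind the internal corepresentability of $\pf{f}\pb{f}$ to obtain an object $A \in \Y$ such that $\pf{f}\pb{f}V \simeq [A, V]$ for every $V$, in a manner compatible with passage to slices. Applying this to constant sheaves $V = \pb{\pi_\Y}S$ and using the identity $\pb{\pi_\X} = \pb{f}\pb{\pi_\Y}$, the Yoneda lemma yields $(\pi_\X)_!(\pb{f}U) \simeq (\pi_\Y)_!(U \times A)$ for every $U \in \Y$. The shape-fibration square then reduces to the projection formula
\[
(\pi_\Y)_!(U \times A) \simeq (\pi_\Y)_!(U) \times_{(\pi_\Y)_!(1_\Y)} (\pi_\Y)_!(A),
\]
which holds once $A$ is locally constant in $\Y$ --- a property built into the compatibility conditions of the approximate fibration definition together with local contractibility.

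For $(2) \Rightarrow (3)$, I would fix a point $y \colon \An \to \Y$ and present it as a cofiltered limit of étale neighborhoods $\{\Y_{/U_i}\}_i$; proper base change then exhibits $\X_y$ as the corresponding cofiltered limit of $\X_{/\pb{f}U_i}$. Because $f$ is proper and all topoi in sight are locally contractible, the shape functor commutes with these cofiltered limits, so the limit of the pullback squares supplied by $(2)$ becomes the fiber sequence of $(3)$, using $\sh{\An} \simeq *$. For $(3) \Rightarrow (1)$, I would build the required corepresenting object explicitly: under local contractibility and hypercompleteness, the category of locally constant objects in $\Y\hyp$ is equivalent to $\An_{/\sh{\Y}}$, and the fiber sequence in $(3)$ identifies the map of spaces $\sh{\X} \to \sh{\Y}$ with such an object $A$. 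Proper base change computes stalks $\pb{y}(\pf{f}\pb{f}V) \simeq \Map_{\An}(\sh{\X_y}, \pb{y}V) \simeq \pb{y}[A, V]$, and the enough-points hypothesis on $\Y\hyp$ upgrades this stalkwise equivalence to a global one, giving $\pf{f}\pb{f} \simeq [A, -]$ after descent back to $\Y$, with the slice compatibilities coming from naturality.

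The main obstacle is the construction in $(3) \Rightarrow (1)$: assembling the pointwise data $y \mapsto \sh{\X_y}$ into a genuine locally constant object of $\Y$ that corepresents $\pf{f}\pb{f}$ on every slice. Two features of the hypotheses are used in an essential way here: the fiber-sequence structure of $(3)$ (rather than merely pointwise shape data) is what produces a map of spaces $\sh{\X} \to \sh{\Y}$ and hence a globally defined $A$, and the enough-points hypothesis on $\Y\hyp$ is what lets this be interpreted as an actual object of $\Y\hyp$ corepresenting $\pf{f}\pb{f}$. A secondary subtlety is the projection formula for $(\pi_\Y)_!$ used in $(1) \Rightarrow (2)$, which can fail for an arbitrary $A$ and genuinely requires the local constancy ensured by the definition of approximate fibration.
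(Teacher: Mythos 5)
Your overall route coincides with the paper's at the level of strategy: the candidate corepresenting object is the same (the locally constant object $\pb{\eta}\pfp{\sh{f}}1_{\sh{\X}}$, i.e.\ the pullback along $\eta$ of $\sh{\X}\rightarrow\sh{\Y}$ viewed as an object of $\An_{/\sh{\Y}}$), the implication $(2)\Rightarrow(3)$ is proved exactly as in the paper via proper base change and the neighborhood filtration of a point, and your $(1)\Rightarrow(2)$ is a correct repackaging of the paper's argument through the projection formula for $\pfs{\eta}$. Two attributions are off, though. Local constancy of the corepresenting object is \emph{not} built into the definition of an approximate fibration: it is a consequence of \emph{properness}, via preservation of internally filtered colimits and the identification of internally compact objects of $\Omega_{\Y}$ with locally constant objects with compact values (\cref{rmk:propapproxfibtopoilocconstrelshape}); without this input your projection-formula step has no justification. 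And ``the shape functor commutes with these cofiltered limits'' is not literally true; what is true is the proper-base-change formula $\sh{\X_y}\simeq\varprojlim_{U}\sh{\X_{/\pb{f}U}}$ in $\Pro(\An)$, which is the content of \cref{cor:shapeoffibpropermorph} and requires the neighborhood-category analysis of the appendix.

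The genuine gap is in $(3)\Rightarrow(1)$. You propose to verify $\pf{f}\pb{f}\simeq\sHom{\Y}{A}{-}$ stalkwise and invoke enough points of $\Y\hyp$. But a stalkwise check only detects equivalences between \emph{hypercomplete} objects, and for general $V\in\Y$ neither $\pf{f}\pb{f}V$ nor $\sHom{\Y}{A}{V}$ is hypercomplete; moreover $\pb{y}$ does not commute with internal homs for general $V$. The argument is only valid after restricting to locally constant $V$ (hypercomplete by \cref{Xloccontr=>locconsthyp}), and one must then \emph{extend} from locally constant $V$ to all of $\Y$, and indeed to all slices $\Y_{/U}$ to get internal corepresentability. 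This extension is the technical heart of the proof and requires: (i) that $\pf{F}\pb{F}$ preserves internally filtered colimits (properness); (ii) that $A$ is internally compact, which forces one to first show each $\sh{\X_y}$ is a \emph{compact} anima (\cref{lem:quasi-fibbtweenconstshap=>constshapefib}) so that $A$ has compact values; and (iii) the base-change identity $\pb{\pi_U}\pb{\eta}\pfp{\sh{f}}1_{\sh{\X}}\simeq\pb{\eta}\pfp{\sh{f_{/U}}}1_{\sh{\X_{/\pb{f}U}}}$ needed to descend the global-sections equivalence to every slice, which in the paper is extracted from condition (2), not from (3) alone. None of this is supplied by ``naturality.'' The paper avoids the difficulty by proving $(2)\Rightarrow(1)$ directly (with no enough-points hypothesis) and separately $(3)\Rightarrow(2)$ by a stalkwise argument applied only to the transformation restricted to locally constant objects; you should restructure your third implication along those lines.
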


The condition of being locally contractible is satisfied, for example, by topoi of the form $\Sh{X}{\An}$, where $X$ is a CW-complex or a locally compact ANR. The concepts of internally compact objects, internally filtered colimits and the characterization of proper maps in terms of the latter given in \cite{Martini2023a} play a fundamental role in most of our arguments. Combining our characterization with earlier theorems of \cite{HughesTaylorWilliams}, we find that our newly introduced concept of an approximate fibration encompasses the classical one.

\begin{corollary}
    Let $f:X\rightarrow Y$ be a proper map of locally compact ANRs. Then $f$ is an approximate fibration if and only if the associated geometric morphism $f:\Sh{X}{\An}\rightarrow\Sh{Y}{\An}$ is an approximate fibration. 
\end{corollary}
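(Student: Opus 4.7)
The strategy is to use the second theorem of the introduction (the shape-theoretic characterization of topos-theoretic approximate fibrations) as a bridge, and then invoke the classical work of Hughes--Taylor--Williams cited in the paper. The two notions in the corollary are defined in very different languages -- one is the classical approximate homotopy lifting property of Coram--Duvall, the other is the internal-categorical corepresentability condition from \cref{subsec:approximate_fibrations} -- so passing through a common shape-theoretic reformulation is the natural move.

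First, I would verify that the hypotheses of the shape-theoretic theorem are satisfied for $f\colon\Sh{X}{\An}\rightarrow\Sh{Y}{\An}$. Since $X$ and $Y$ are locally compact ANRs, both topoi are locally contractible (this is a standard fact about ANRs, and is in any case an assumption already used in the paper). The hypercompletion of $\Sh{Y}{\An}$ has enough points by \cref{rmk:Xhypenoughpoints}. Since $f$ is proper as a map of topological spaces and $Y$ is Hausdorff and locally compact, the induced geometric morphism is proper. Consequently, all three conditions of the shape-theoretic theorem are equivalent when applied to $f$. In particular, the topos-theoretic approximate fibration condition is equivalent to the shape quasi-fibration condition: for every point $y\in Y$, the sequence $\sh{\Sh{X_{y}}{\An}}\rightarrow\sh{\Sh{X}{\An}}\rightarrow\sh{\Sh{Y}{\An}}$ is a fiber sequence in $\mathrm{Pro}(\An)$.

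Next, I would translate the shape quasi-fibration condition back to the language of topological spaces. For a locally compact ANR $Z$, the shape of the topos $\Sh{Z}{\An}$ recovers the classical (strong) shape of $Z$, and the fiber topos $\Sh{X}{\An}\times_{\Sh{Y}{\An}}\mathrm{pt}$ at a point $y$ is $\Sh{X_y}{\An}$ (the fiber $X_y$ is compact since $f$ is proper). Under this dictionary, the shape quasi-fibration condition for the geometric morphism becomes exactly the classical notion that $f$ is a shape fibration in the sense of Mardesic--Rushing, and in particular a shape quasi-fibration. The main obstacle in this step is making sure that the abstract fiber of geometric morphisms and the abstract shape agree with their point-set counterparts; this is delicate in full generality but standard in the locally compact ANR setting.

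Finally, I would appeal to the classical theorem of Hughes--Taylor--Williams \cite{HughesTaylorWilliams} (already cited in the introduction), which asserts that for a proper map between locally compact ANRs, being a classical Coram--Duvall approximate fibration is equivalent to being a shape fibration (equivalently, a shape quasi-fibration, by the results of Coram--Duvall and Mardesic--Rushing in this setting). Chaining the two equivalences -- topos-theoretic approximate fibration $\Leftrightarrow$ shape quasi-fibration $\Leftrightarrow$ classical approximate fibration -- yields the corollary. The anticipated main difficulty, as noted, is not logical but expository: a clean verification that shapes and fibers computed in the topos-theoretic sense coincide with those traditionally used in geometric topology, so that the Hughes--Taylor--Williams input can be applied verbatim.
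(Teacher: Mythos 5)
Your overall strategy is the one the paper uses -- reduce the topos-theoretic condition to a shape-theoretic one via the main characterization theorems, then invoke Hughes--Taylor--Williams -- but you route the argument through the \emph{pointwise} condition (shape quasi-fibration) where the paper routes it through the \emph{open-cover} condition (shape fibration), and this choice matters for which classical input you need. The paper's proof of \cref{thm:characterization_of_approximate_fibrations} applies \cref{thm:approxfibandshapefib} to reduce to ``$f$ is a shape fibration,'' then uses \cref{cor:shapefibontopspace} together with $\sh{Z}\simeq\Pi_{\infty}(Z)$ for locally compact ANRs to identify that condition verbatim with condition (2) of \cref{thm:hughestaylorwilliams}, which is exactly what \cite{HughesTaylorWilliams} (Theorems 12.13 and 12.15) proves equivalent to the Coram--Duvall definition. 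Your version instead needs a classical theorem saying that a proper map of locally compact ANRs is an approximate fibration if and only if each fiber $X_y$ fits into a shape fiber sequence; you attribute this to Coram--Duvall and Marde\v{s}i\'c--Rushing, but that is not the form of the Hughes--Taylor--Williams result the paper relies on, and it is the weakest link in your chain (it also forces you to identify the topos-theoretic fiber $\Sh{X}{\An}\times_{\Sh{Y}{\An}}\An$ with $\Sh{X_y}{\An}$, an extra point-set verification the open-cover route avoids entirely). The fix is immediate: since \cref{cor:approximate_fibrations_characterization} gives you both equivalences on the topos side (its hypotheses hold here -- local contractibility of sheaf topoi on ANRs, properness, and enough points for the hypercompletion by \cref{rmk:Xhypenoughpoints}), simply exit through condition (3) (shape fibration) rather than condition (4), and the classical input becomes exactly \cref{thm:hughestaylorwilliams}. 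Note also that the paper's route only needs \cref{thm:approxfibandshapefib} and so does not use the enough-points hypothesis at all.
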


Understanding approximate fibrations in this terms is a step toward importing techniques from higher topos theory into manifold topology. We hope this will ultimately clarify how topos theory interacts with classical obstruction theories (such as \cite{FarrellFibering, FarrellLueckSteimle}), and will provide new perspectives on the topology of manifolds.

\subsection*{Acknowledgements}

We thank Shmuel Weinberger for introducing us to the beautiful theory of approximate fibrations, and Emma Brink for helpful comments. MV thanks Sebastian Wolf for answering questions about internal higher category theory.
CK is supported by the Max Planck Institute for Mathematics in Bonn. MV is supported by the University of Regensburg.

\section{Recollections on topoi and internal category theory}\label{sec:recall_topoi}

We recall the language of $\infty$-topoi and internal higher category theory used throughout this article. We mostly follow from the presentations given in \cite{lurie2009higher, martini2025presentabilitytopoiinternalhigher, Martini2023a}. We also use this section to introduce the notations we use in the rest of the article.

\subsection{Topos theory}

For any $\infty$-category $I$, we write $\PSh{I}$ for the $\infty$-category of presheaves of animaes on $I$. An $\infty$-category $\X$ is called an \textit{$\infty$-topos} if there exists a small $\infty$-category $I$ such that $\X$ is a left exact accessible Bousfield localization of $\PSh{I}$. Other useful characterizations are provided by the so called \textit{Giraud's axioms} (see \cite[Section 6.1]{lurie2009higher}).  A \textit{geometric morphism} of topoi $f \colon \X \rightarrow \Y$ is the datum of a right adjoint functor $f_* \colon \X \rightarrow \Y$ whose left adjoint is left exact (i.e. preserves finite limits). We write $\Top$ for the category of topoi and geometric morphism.

For any topological space $X$, the $\infty$-category $\Sh{X}{\An}$ of sheaves of animae on $X$ is an $\infty$-topos. This example serves as the foundation for most of the applications of interest in this paper. Note that a continuous map $f \colon X \rightarrow Y$ induces a functor $f_*\colon \Sh{X}{\An} \rightarrow \Sh{Y}{\An}$. This is usually referred to as the \textit{pushforward along $f$}. The functor $\pf{f}$ admits a left exact left adjoint $\pb{f}$, called the \textit{pullback along $f$}, and hence it is part of a geometric morphism.

Every topos has a final object, which we usually denote as $1 \in \Y$. The functor corepresented by $1\in\Y$ admits a left exact left adjoint, given as the unique colimit-preserving extension of the functor sending the point $1 \in \An$ to $1 \in \Y$. There is a unique colimit-preserving left exact functor $\An \rightarrow \Y$, so $\An$ is the terminal object in $\Top$. 

For an object $U$ in a topos $\Y$, the forgetful map $(\pi_U)_\sharp \colon \Y_{/U} \rightarrow \Y$ admits a right adjoint which itself admits a right adjoint. The geometric morphism presented by the double-right adjoint of $(\pi_U)_\sharp$ will be denoted by $\pi_U \colon \Y_{/U} \rightarrow \Y$. An \textit{\'etale geometric morphism} is a geometric morphism equivalent in $\Top$ to a morphism of the type $\pi_U \colon \Y_{/U} \rightarrow \Y$ for some topos $\Y$ and $U\in \Y$.

A map $f \colon U \rightarrow V$ in a topos $\Y$ is an \textit{effective epimorphism} if the augmented simplicial diagram $(U\times_V \dots \times_V U)_{p \in \Delta\op} \rightarrow V$ is a colimit diagram. An \'etale morphism is an \textit{\'etale cover} if it is equivalent to $\pi_U \colon \Y_{/U} \rightarrow \Y$ for some effective epimorphism $U \rightarrow 1$.

An object $F \in \Y$ is \textit{constant} if it lies in the essential image of $p^* \colon \An \rightarrow \Y$, where $p$ is the unique geometric morphism to $\An$. An object $F \in \Y$ is \textit{locally constant} if there exists a collection of objects $V_i \in \Y$ such that $\coprod_i V_i \rightarrow 1$, $i \in I$ is an effective epimorphism, and $\pi_{V_i}^*(F) \in \Y_{/V_i}$ is constant for each $i\in I$. A locally constant object is said to have \textit{compact values} if there exists a \'etale cover of $1$ as above such that each restriction $\pi_{V_i}^*(F) \in \Y_{/V_i}$ is the pullback of a compact object in $\An$ along the unique geometric morphism $ \Y_{/V_i}\rightarrow\An$. We write $\Y^{lc}$ for the full subcategory of $\Y$ spanned by the locally constant objects.

\subsection{Points and hypercompletions}

An object $U \in \Y$ is \textit{$n$-truncated} if $\Map(V,U) \in \An$ is an $n$-truncated space for each $V \in Y$. A morphism $f \colon U \rightarrow V$ is $\infty$-connective if it for each $n \geq 0$ and each $n$-truncated object $W \in \Y_{/V}$ the map $\Map_{\Y_{/V}}(V,W) \rightarrow \Map_{\Y_{/V}}(U,W)$ is an equivalence. If $f \colon \X \rightarrow \Y$ is a geometric morphism, then $f^* \colon \Y \rightarrow \X$ sends $\infty$-connective morphisms to $\infty$-connective morphisms \cite[Prop. 6.5.1.16.]{lurie2009higher}.

An object $U$ in an arbitrary $\infty$-topos is \textit{hypercomplete} if it is local with respect to $\infty$-connective morphisms - for each $\infty$-connective morphism $f \colon V \rightarrow W$ in $\X$, the map
\[ f^* \colon \Map(W,U) \rightarrow \Map(V,U) \]
is an equivalence. An $\infty$-topos is said to be \textit{hypercomplete} if each of its objects is hypercomplete. Since $\pb{f}$ preserves $\infty$-connective morphisms, it follows that $\pf{f}$ preserves hypercomplete objects.

The inclusion of the full subcategory on the hypercomplete topoi has a right adjoint $(-)^{\hyp}$ called \textit{hypercompletion}, whose adjunction counit $j \colon \Y^{\hyp} \rightarrow \Y$ is such that $j_*$ is equivalent to the inclusion of the subcategory spanned by the hypercomplete objects.

A \textit{point} of an $\infty$-topos $\X$ is a geometric morphism $\An \xrightarrow{x} \X$. Since the $\infty$-topos $\An$ is hypercomplete, we get an equivalence in $\An$
\[ \Map_{\Top}(\An, \X) \simeq \Map_{\Top}(\An,\X^{\hyp}) \]
of points in $\X$ and points in the hypercompletion $\X^{\hyp}$.
An $\infty$-topos $\X$ is said to have \textit{enough points} if it is possible to detect equivalences by pulling back along points. That is, if $f \colon U \rightarrow V \in \X$ is an equivalence if and only if for each point $x \colon \An \rightarrow \X$ the morphism $x^*f \colon x^* U \rightarrow x^* V$. A topos with enough points is automatically hypercomplete by \cite[Rmk. 6.5.4.7.]{lurie2009higher}.



\begin{remark}\label{rmk:Xhypenoughpoints}
    Let $X$ be any topological space. It is well known that $\Shsp{X}$ is not necessarily hypercomplete  (e.g. take $X$ to be the Hilbert cube), and therefore does not have enough points. However, $\Shsp{X}\hyp$ has enough points for any topological space $X$. This follows from the fact that in $\Shsp{X}$, the class of $\infty$-connective morphisms agrees with that of stalkwise equivalences (see \cite[Lemma A.3.9]{lurie2017higher}).
\end{remark}



\subsection{Internal higher categories}

\label{subsec:internal_higher_cats}

Given a topos $\Y$, Martini-Wolf introduce the notion of a \textit{category internal to $\Y$} as a sheaf of categories on $\Y$, i.e. a limit-preserving functor $\Y\op \rightarrow \Cat$. We get a category
\[ \Cat_{\Y} \coloneqq \Fun^{R}(\Y\op, \Cat) \simeq \Y \otimes \Cat \in \mathrm{Pr}^L \]
whose objects we will shortly refer to a \textit{$\Y$-categories} and whose morphisms will be called \textit{$\Y$-functors}. 

Given a $\Y$-category $\C$ and $U \in \Y$, the value $\C(U)$ will be referred to as the \textit{category of sections of $\C$ over $U$}. In particular, $\C(1)$ will be called the category of \textit{global sections}. The global section functor admits a left adjoint $\mathrm{const} \colon \Cat \rightarrow \Cat_{\Y}$, assigning to a category $I$ its associated \textit{constant $\Y$}-category. 

The category of $\Y$-categories is cartesian closed, we write $\intFun(\C,\D)$ for the \textit{$\Y$-category of $\Y$-functors} from $\C$ to $\D$. We also write $\Fun_{\Y}(\C,\D)$ for its category of global sections, the category of $\Y$-functors from $\C$ to $\D$. Note that the bifunctor $\Fun_{\Y}(\C,\D)$ enhances $\Cat_{\Y}$ to an $(\infty,2)$-category.

Since $\Cat_{\Y}$ is an $(\infty,2)$-category, the notion of \textit{adjunctions} makes sense for $\Y$-categories, which naturally leads to the notions of $\Y$-limits and colimits using the diagonal functor $\C \rightarrow \intFun(I,\C)$ for $\Y$-categories $I$ and $\C$.

The $\Y$-category of \textit{$\Y$-groupoids} is defined as the sheaf $\Omega_\Y \colon U \mapsto \Y_{/U}$. In many ways, $\Omega_\Y$ plays a similar role as that of $\An$ in usual higher category theory. Another useful example of a $\Y$-category is that of locally constant objects. More precisely, this is given by the functor $\Y\op\rightarrow\infcat$ defined by $U\mapsto (\Y_{/U})^{lc}$. We denote this $\Y$-category by $\Omega_{\Y}^{lc}$. There is an obvious fully faithful $\Y$-functor $\Omega_{\Y}^{lc}\hookrightarrow\Omega_{\Y}$.

Let $\C$ be a $\Y$-category. There is a mapping $\Y$-functor $\Map_{\Omega_{\Y}}(-,-) \colon\C\op \times \C \rightarrow \Omega_{\Y}$. For $U \in \Y$ it evaluates to a functor $\Map_{\C}(-,-)_U \colon \C(U)\op \times \C(U) \rightarrow \Y_{/U}$.
In the special case $\C = \Omega_\Y$ there is an identification
\[ \Map_{\Omega_{\Y}}(-,-)_U \simeq \sHom{\Y_{/U}}{-}{-} \colon \Y_{/U}\op \times \Y_{/U} \rightarrow \Y_{/U} \]
with the internal hom induced by the cartesian monoidal structure on $\Y_{/U}$.
Furthermore, for $U \in \Y$ and arbitrary $\C$ there is an equivalence
\[ \Map_{\C}(\pi_U^*(-),\pi_U^*(-))_U \simeq \pi_U^* \Map_{\C}(-,-)_1. \]

\subsection{Internal topos theory}

In \cite{martini2025presentabilitytopoiinternalhigher}, Martini-Wolf introduce the notion of a \textit{topos internal to $\Y$}, which is a certain class of categories internal to $\Y$ satisfying analogs of ordinary definitions of topoi to internal higher category theory. A \textit{geometric morphism of $\Y$-topoi} $F \colon \Theta \rightarrow \Lambda$ is the datum of a right adjoint $\Y$-functor $F_* \colon \Theta \rightarrow \Lambda$ whose left adjoint $F^*$ additionally commutes with finite $\Y$-limits, see \cite[Section 3.2]{martini2025presentabilitytopoiinternalhigher}.
We write $\Top(\Y)$ for the category of $\Y$-topoi.
An example of a $\Y$-topos is the $\Y$-category of $\Y$-groupoids $\Omega_{\Y}$, and in fact this is the final $\Y$-topos. It turns out that, thanks to \cite[Theorem 3.2.5.1]{martini2025presentabilitytopoiinternalhigher}, $\Y$-topoi may be described using the overcategory of $\Y$ in the usual category $\Top$ of topoi and geometric morphisms.
\begin{theorem}
    \label{thm:characterisation_of_internal_topoi}
    Let $\Y$ be a topos. Then taking global sections defines an equivalence of categories
    \[ \Top(\Y) \rightarrow \Top_{/Y}. \]
\end{theorem}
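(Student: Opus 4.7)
The plan is to construct an explicit inverse to the global sections functor and verify that the two assignments are mutually inverse. Given a geometric morphism $f\colon\X\to\Y$, define a presheaf of categories $\Theta_f\colon\Y\op\to\Cat$ by $U\mapsto\X_{/\pb{f}U}$, with transitions induced by pullback. The first task is to check that $\Theta_f$ is limit-preserving, hence defines a $\Y$-category. This factors as $\Y\op\xrightarrow{(\pb{f})\op}\X\op\xrightarrow{\Omega_\X}\Cat$: since $\pb{f}$ preserves colimits (being a left adjoint) and $\Omega_\X$ is itself a $\X$-category (so sends colimits in $\X$ to limits in $\Cat$), the composite sends colimits in $\Y$ to limits. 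Concretely this encodes descent for slicing: if $V\simeq\colim_i V_i$ then $\X_{/\pb{f}V}\simeq\lim_i \X_{/\pb{f}V_i}$.

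Next, verify that $\Theta_f$ is an internal topos in the sense of Martini--Wolf. Using their internal Giraud axioms, this amounts to exhibiting $\Theta_f$ as an accessible left exact localization of an internal presheaf $\Y$-category. Such a presentation is obtained by transporting any presentation of $\X$ as a left exact localization of a presheaf $\infty$-topos along the family of pullback functors $\pb{f}\colon\Y_{/U}\to\X_{/\pb{f}U}$, assembled into a morphism of $\Y$-topoi. The assignment $f\mapsto\Theta_f$ extends tautologically to a functor $\Top_{/\Y}\to\Top(\Y)$, and one composition is immediate: $\Theta_f(1)=\X_{/\pb{f}1}\simeq\X$, with the structural $\Y$-functor to $\Omega_\Y$ globally sectioning to $\pb{f}\colon\Y\to\X$, hence recovering $f$ itself.

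The main obstacle is the other composition. Given a $\Y$-topos $\Theta$, global sections of the unique geometric morphism $\Theta\to\Omega_\Y$ produce a topos $\Theta(1)$ together with a geometric morphism $g\colon\Theta(1)\to\Y$, and one must exhibit natural equivalences $\Theta(U)\simeq\Theta(1)_{/\pb{g}U}$ compatibly with the transition maps of $\Theta$. The strategy is to interpret both sides as global sections of an internal slice: for each $U\in\Y$, viewed tautologically as an object of $\Omega_\Y(U)$, one forms the slice $\Y$-topos $\Theta_{/U}$ classifying objects of $\Theta$ lying over $U$. On the internal side, evaluating this slice at the terminal object of $\Y$ recovers $\Theta(U)$; externally, the same slice is classified by the étale morphism $\Theta(1)_{/\pb{g}U}\to\Theta(1)$. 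Reconciling these two descriptions is the heart of the argument and requires the internal Yoneda embedding for $\Y$-categories, together with the compatibility of internal slicing with global sections established in the Martini--Wolf framework.
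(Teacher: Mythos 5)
First, note that the paper does not prove \cref{thm:characterisation_of_internal_topoi} at all: it is imported verbatim from Martini--Wolf as their Theorem 3.2.5.1, with the inverse functor described in their Remark 3.2.5.3. Your candidate inverse $f \mapsto (U \mapsto \X_{/\pb{f}U})$ is exactly the functor $f \mapsto \pf{f}\Omega_{\X}$ that the paper quotes, and your verification that it is limit-preserving (descent for slices composed with cocontinuity of $\pb{f}$) and that one round-trip recovers $f$ on global sections is correct and agrees with the cited description.

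The gap is in the other round-trip, which you yourself flag as ``the heart of the argument'' but do not carry out. For an arbitrary $\Y$-topos $\Theta$ with structure morphism $\pi \colon \Theta \to \Omega_{\Y}$ and induced geometric morphism $g \colon \Theta(1) \to \Y$, the assertion that $\Theta(U) \simeq \Theta(1)_{/\pb{g}U}$ naturally in $U$ \emph{is} the mathematical content of the theorem; it is not a formal consequence of general $\Y$-category theory (a general $\Y$-category is far from determined by its global sections --- consider constant $\Y$-categories), but depends essentially on $\Theta$ satisfying the internal Giraud axioms. Your proposed reconciliation of the ``internal'' and ``external'' descriptions of the slice $\Theta_{/\pi^{*}U}$ restates rather than proves this: the identification of the global sections of an internal slice $\Theta_{/x}$ with $\Theta(1)_{/x}$ for $x \in \Theta(1)$ is indeed general, but the identification of $\Theta_{/\pi^{*}U}(1)$ with $\Theta(U)$ is equivalent to the claim being established, and deferring it to ``the compatibility of internal slicing with global sections established in the Martini--Wolf framework'' is circular, since the theorem under discussion is one of the main results of that framework. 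Similarly, your verification that $\pf{f}\Omega_{\X}$ satisfies the internal topos axioms (``transporting a presentation'') is only asserted. As a map of where the work lies the proposal is accurate, but as a proof it is incomplete at precisely the step that carries the content.
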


The inverse of the equivalence in \cref{thm:characterisation_of_internal_topoi} is described explicitly in \cite[Remark 3.2.5.3]{martini2025presentabilitytopoiinternalhigher}. Given a geometric morphism of topoi $f \colon \X \rightarrow \Y$, we get an induced functor
\[ f_* \colon \Cat_{\X} \rightarrow \Cat_{\Y}, \hspace{3mm} \C \mapsto [ U \in \Y \mapsto \C(f^* U) ]. \]
Applied to the $\X$-category $\Omega_{\X} \colon V \mapsto \X_{/V}$ this results in the $\Y$-category $f_*\Omega_{\X} \colon U \mapsto \X_{/f^*U}$. This happens to be a $\Y$-topos, it comes with a geometric morphism of $\Y$-topoi $f_* \Omega_X \rightarrow \Omega_Y$, assembling the geometric morphisms $\X_{/f^*U} \rightarrow \Y_{/U}$, which on global sections induces the original geometric morphism $f$.

\begin{notation}
    \label{not:geometric_morphism_notation}
    Given geometric morphisms $\X \xrightarrow{f} \Y \xrightarrow{g} \Z$ of topoi, we will usually write
    $F \colon \pf{(gf)} \Omega_\X \rightarrow f_*\Omega_{\Y}$ for the corresponding geometric morphism of $\Z$-topoi.
\end{notation}

Given a geometric morphism $\X \xrightarrow{f} \Y$, let us also describe the geometric morphism of $\Y$-topoi $F:f_* \Omega_{\X} \rightarrow \Omega_{\Y}$ explicitly at each level $U \in \Y$. The left exact left adjoint $F^* \colon \Omega_{\Y} \rightarrow f^*\Omega_{\X}$ is given at level $U$, by the functor $\Y_{/U} \rightarrow \X_{/f^* U}$ induced by $f^* \colon \Y \rightarrow \X$. By inspection, one sees that the right adjoint $F_* \colon f_*\Omega_{\X} \rightarrow \Omega_{\Y}$ is therefore given at level $U \in \Y$ by the functor $\X_{/f^*U} \rightarrow \Y_{/U}$ sending $V\rightarrow f^*U$ to $f_* V \times_{f_* f^* U} U$.

\subsection{Internally compact objects}

Martini-Wolf introduce a class of \textit{filtered $\Y$-categories} as such $\Y$-categories $J$ such that the colimit functor $\intFun(J,\Omega_{\Y}) \rightarrow \Omega_{\Y}$ preserves $I$-shaped limits, where $I$ is a locally constant sheaf  of finite categories on $\Y$. Colimits over filtered $\Y$-categories will be called \textit{internally filtered colimits}. An object $x$ in a $\Y$-category $\C$ is called \textit{internally compact} if the functor $\Map_{\C}(x,-) \colon \C \rightarrow \Omega_{\Y}$ preserves internally filtered colimits. We write $\C^{cpt}$ for the full subcategory of $\C$ spanned by internally compact objects. There is a useful characterization of internally compact objects in $\Omega_{\Y}$.

\begin{proposition}\label{prop:charinternallycpt=locconstcptvalue}
    Let $\Y$ be an $\infty$-topos and $U \in \Y$ an object. Then $U$ is internally compact if and only if it is locally constant with compact values.
\end{proposition}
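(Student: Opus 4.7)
The plan is to proceed by étale descent, reducing the proposition to the classical characterization of compact objects in $\An$.

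My first task would be to establish that internal compactness of an object of $\Omega_\Y$ is local for the étale topology on $\Y$: that is, for any effective epimorphism $\coprod_i V_i \to 1$, the object $U$ is internally compact in $\Omega_\Y$ if and only if each $\pi_{V_i}^*U$ is internally compact in $\Omega_{\Y_{/V_i}}$. The forward direction is a consequence of the base change identity
\[
    \Map_{\Omega_{\Y}}(\pi_V^*(-),\pi_V^*(-))_V \simeq \pi_V^*\,\Map_{\Omega_{\Y}}(-,-)_1
\]
recalled in \cref{subsec:internal_higher_cats}, together with the fact that $\pi_V^*$ preserves internally filtered colimits. The converse rests on the joint conservativity of the family $\{\pi_{V_i}^*\}_i$. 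Since being locally constant with compact values is étale-local by definition, this reduces the proposition to the case where $U\simeq q^*K$ is the pullback of some $K\in\An$ along the terminal geometric morphism $q\colon \Y\to\An$.

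For this reduced local statement, the ``if'' direction is direct: a compact $K\in\An$ is a finite $\infty$-groupoid, so the constant $\Y$-category $\mathrm{const}(K)$ is a locally constant sheaf of finite categories on $\Y$. By the cartesian closed structure, the internal hom $\sHom{\Y}{q^*K}{-}$ is identified with the $\mathrm{const}(K)$-shaped $\Y$-limit of the identity $\Y$-functor on $\Omega_\Y$, and such limits commute with internally filtered colimits by the very definition of the latter given in \cite{Martini2023a}. The ``only if'' direction follows by pulling internal compactness back along $q^*\colon \An\to\Omega_\Y$: the resulting functor $\Map_\An(K,-)\colon\An\to\An$ preserves filtered colimits, which is exactly compactness of $K$ in $\An$.

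The main obstacle is the étale-locality of internal compactness in the forward direction: given $U$ internally compact in $\Omega_\Y$, one must actually produce a cover over which it becomes constant, rather than merely checking a colimit-preservation property fibrewise. This reflects the genuinely internal nature of the condition, and I expect to invoke the internal presentability framework of Martini--Wolf from \cite{martini2025presentabilitytopoiinternalhigher, Martini2023a}, which organizes the internally compact objects of $\Omega_\Y$ into a sub-$\Y$-category whose sections are automatically controlled by étale descent. Granting this, the trivializing cover for local constancy can be produced directly from a trivializing cover for internal compactness, at which point the base case $\Y=\An$ discussed above concludes the argument.
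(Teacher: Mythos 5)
The paper's own proof of this proposition is a one-line citation of \cite[Corollary 2.2.3.8]{martini2025presentabilitytopoiinternalhigher}, so any self-contained argument is by necessity a different route; the question is whether yours closes the gap, and it does not. Your \'etale-descent reduction is sound only for the ``if'' direction: given that $U$ is locally constant with compact values, you are \emph{handed} a trivializing cover $\{V_i \rightarrow 1\}$, you restrict along it, and joint conservativity of the $\pi_{V_i}^*$ together with the base-change identity for internal mapping objects lets you reduce to $U \simeq q^*K$ with $K$ compact (where, modulo the fix below, your limit argument works). But for the ``only if'' direction there is no cover to restrict to: internal compactness is a colimit-preservation \emph{property} of the corepresented $\Y$-functor, not a structure that comes with a trivializing family, so the phrase ``the trivializing cover for local constancy can be produced directly from a trivializing cover for internal compactness'' has no content. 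The missing ingredient is the standard $\mathrm{Ind}$-completion argument: one needs that $\Omega_{\Y}$ is generated under internally filtered colimits by objects that are already locally constant with compact values (cf.\ \cref{prop:omegacptlygen}), writes $U$ as such a colimit, uses internal compactness of $U$ to factor $\mathrm{id}_U$ through a stage, and concludes that $U$ is a retract of a locally constant object with compact values (and that this class is closed under retracts). Deferring exactly this step to ``the internal presentability framework of Martini--Wolf'' is deferring to the very corollary the paper cites, so your proposal does not constitute an independent proof of the hard direction.

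Two further corrections. First, a compact object of $\An$ is a retract of a finite anima, not necessarily a finite $\infty$-groupoid (Wall's finiteness obstruction); your ``if'' argument survives because $\sHom{\Y}{q^*K}{-}$ is then a retract of a $\mathrm{const}(K')$-shaped limit functor for $K'$ finite, and retracts of functors preserving internally filtered colimits again preserve them, but the claim as written is false. Second, in the reduced ``only if'' step, passing from internal compactness of $q^*K$ to compactness of $K$ is not just ``pulling back along $q^*$'': one has $\sHom{\Y}{q^*K}{q^*X} \simeq \varprojlim_{K} q^*X$, and identifying this with $q^*\Map_{\An}(K,X)$ already uses finiteness (or compactness) of $K$, i.e.\ the thing you are trying to prove; you would instead need to extract the conclusion from global sections or from a suitable adjoint, and this requires an actual argument.
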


\begin{proof}
    This is a reformulation of \cite[Corollary 2.2.3.8]{martini2025presentabilitytopoiinternalhigher}.
\end{proof}

Explicitly, this means that $U$ is internally compact if and only if we can find an \'etale cover $\{V_i \rightarrow 1\in Y\}_{i \in I}$ such that for each $i \in I$ the object $U \times V_i \in \Y_{/V_i}$ is pulled back along the geometric morphism $\Y_{/V_i} \rightarrow \An$ from a compact object $W_i \in \An^\omega$. 

A $\Y$-category $\C$ is said to be \textit{compactly generated} if it is cocomplete and the canonical functor $\underline{\text{Ind}}_{\Y}(\C^{cpt})\rightarrow\C$ is an equivalence. See \cite[2.3.1]{martini2025presentabilitytopoiinternalhigher} and \cite[7.1]{martini2021colimits} for a definition of $\underline{\text{Ind}}_{\Y}$ and colimit completions of $\Y$-categories. The main example of a compactly generated $\Y$-category we care about in this paper is $\Omega_{\Y}$.

\begin{proposition}\label{prop:omegacptlygen}
    The $\Y$-category $\Omega_{\Y}$ is compactly generated.
\end{proposition}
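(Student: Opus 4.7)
To prove that $\Omega_\Y$ is compactly generated, the plan is to verify the two defining ingredients: that $\Omega_\Y$ is $\Y$-cocomplete, and that the canonical $\Y$-functor
\[ \iota \colon \underline{\text{Ind}}_\Y(\Omega_\Y^{cpt}) \to \Omega_\Y \]
is an equivalence. Cocompleteness is standard and can be taken from the general theory: $\Omega_\Y$ is the terminal $\Y$-topos, hence a presentable $\Y$-category in the sense of \cite{martini2025presentabilitytopoiinternalhigher}, and therefore admits all small $\Y$-colimits.

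By \cref{prop:charinternallycpt=locconstcptvalue} we may identify $\Omega_\Y^{cpt}$ with the full $\Y$-subcategory $\Omega_\Y^{lc,cpt}$ of locally constant objects with compact values. The canonical $\Y$-functor $\iota$ is then produced via the universal property of the internal Ind-completion, as the unique $\Y$-cocontinuous extension of the inclusion $\Omega_\Y^{cpt}\hookrightarrow \Omega_\Y$ that preserves internally filtered colimits. Fully faithfulness of $\iota$ is essentially formal: the objects in the domain are internally compact by construction, so mapping $\Y$-groupoids out of them commute with the internally filtered colimits defining each object of $\underline{\text{Ind}}_\Y(\Omega_\Y^{cpt})$, and the resulting expression matches the formula computed in $\Omega_\Y$.

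The crux, and the step I expect to be the main obstacle, is essential surjectivity of $\iota$. Unwinding the definitions, one needs to show that for every $U \in \Y$, every object of $\Y_{/U}$ is an internally filtered colimit of locally constant objects with compact values, in a manner compatible with pullback along arbitrary morphisms $U' \to U$. My plan is to bootstrap from the classical compact generation $\An \simeq \text{Ind}(\An^{\omega})$, using that $\Omega_\Y$ satisfies the universal property of the free $\Y$-cocompletion of the terminal $\Y$-category. Because the internal Ind-construction of \cite{martini2025presentabilitytopoiinternalhigher} is designed to be compatible with \'etale descent and to respect this universal property, the \'etale-local presentations of objects of $\Y_{/U}$ as filtered colimits of constant compact objects should assemble into a genuinely $\Y$-categorical internally filtered colimit presentation. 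The delicate point is checking that the patching data is compatible, i.e.\ that the assembled diagram is indexed by a filtered $\Y$-category in the sense of Martini--Wolf and not merely a sectionwise filtered one, and it is here that the explicit formalism of internal Ind-completion from \cite{martini2025presentabilitytopoiinternalhigher, martini2021colimits} must be invoked.
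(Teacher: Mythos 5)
Your proposal has the right overall architecture (cocompleteness, fully faithfulness of the comparison functor via internal compactness, essential surjectivity as the crux), and the fully faithfulness step is indeed the standard formal argument and internalizes without trouble. But the proof is not complete: essential surjectivity, which you yourself identify as ``the main obstacle,'' is only described as a plan. You say that the \'etale-local presentations of objects of $\Y_{/U}$ as filtered colimits of constant compact objects ``should assemble'' into an internally filtered colimit presentation, and you flag the delicate point---that a sectionwise filtered diagram need not be a filtered $\Y$-category in the sense of Martini--Wolf---without resolving it. That flagged point is precisely the mathematical content of the statement; an object of $\Y_{/U}$ is in general \emph{not} a filtered colimit of locally constant compact objects section by section (already for $\Y = \Shsp{X}$ an arbitrary sheaf is built from representables, which are typically not locally constant), so the internally filtered colimit presentation genuinely requires diagrams indexed by non-constant internal categories, and producing these is where all the work lies. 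As written, the argument therefore has a gap at its central step.

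For comparison, the paper does not attempt a from-scratch argument at all: its proof is a one-line citation to \cite[Proposition 2.3.3.6]{martini2025presentabilitytopoiinternalhigher}, which is exactly the general statement you are trying to reprove. Your intended strategy---bootstrapping from $\An \simeq \mathrm{Ind}(\An^{\omega})$ via the universal property of $\Omega_{\Y}$ as a free $\Y$-cocompletion---is in fact close in spirit to how Martini--Wolf establish their result, so the cleanest fix is either to carry out that bootstrap in full (which amounts to redoing a nontrivial portion of their Section 2.3) or simply to cite their proposition, as the paper does.
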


\begin{proof}
    This is a special case of \cite[Proposition 2.3.3.6]{martini2025presentabilitytopoiinternalhigher}
\end{proof}

In practice, \cref{prop:omegacptlygen} is used in conjunction with \cref{prop:charinternallycpt=locconstcptvalue} to verify that a given transformation of internally filtered colimits preserving $\Y$-functors is invertible, by checking it only on locally constant objects.

\subsection{Shape theory}

\begin{definition}
    Let $f:\X\rightarrow\Y$ be any geometric morphism. The \textit{$\Y$-shape} of $f$ is the $\Y$-functor $F_* F^* \colon \Omega_Y \rightarrow \Omega_Y$.
\end{definition}

Thanks to the discussion in the previous subsection, we can give an explicit description of the $\Y$-shape of a geometric morphism $f:\X\rightarrow\Y$. After taking sections at $U\in\Y$, $F_* F^*$ is given by the functor $\Y_{/U}\rightarrow\Y_{/U}$ sending $V\rightarrow U$ to the object $f_*f^* V \times_{f_*f^*U} U \rightarrow U$. The above pullback is taken over the unit transformation $U\rightarrow f_*f^*U$.

\begin{example}\label{example:corepresentableasintshape}
    Let $U$ be an object of $\Y$. The $\Y$-functor $\Map(U,-):\Omega_{\Y}\rightarrow\Omega_{\Y}$ is the $\Y$-shape of the \'etale geometric morphism $\pi_U:\Y_{/U}\rightarrow\Y$. 
\end{example}

When $\Y=\An$, we often make use of the equivalence $\Fun^{\lex,\mathrm{acc}}(\An,\An)\op \simeq \Pro(\An)$ to identify the shape of the geometric morphism $f:\X\rightarrow\An$ with a pro-object in $\An$. This pro-object is denoted by $\sh{\X}$. If $\X=\Sh{X}{\An}$ for some topological space $X$, we write $\sh{X}$ instead of $\sh{\Sh{X}{\An}}$. When $X$ is a compact Hausdorff topological space, $\sh{X}$ agrees with what is classically known as the \textit{strong shape of $X$} (see \cite{hoyois2018higher}). 
The Yoneda embedding induces a fully faithful, finite limit preserving functor
\[ \An \simeq \Fun^{R, \mathrm{acc}}(\An,\An)\op \subset \Pro(\An). \]
Moreover, this functor preserves finite limits. Indeed, from $\Pro(\An) \simeq \mathrm{Ind}(\An\op)\op$ it follows that $\An\op \rightarrow \Pro(\An)\op$ preserves finite colimits \cite[5.3.5.14.]{lurie2009higher}.
We say that an $\infty$-topos $\X$ is of \textit{constant shape} if $\sh{\X}$ is a constant pro-space. In this case $f_* f^* \simeq \Map(\sh{\X},-)$.

\begin{example}
    \label{ex:ANRs}
    When $X$ is paracompact and has the homotopy type of a CW-complex, then $\sh{\Sh{X}{\An}}$ is a constant pro-object which agrees with the weak homotopy type of $X$, see \cite[Rmk. A.1.4.]{lurie2017higher}. In particular, if $X$ is paracompact and every open subset $U \subset X$ has the homotopy type of a CW complex, then $X$ is locally of constant shape. This property is enjoyed, for example, by the class of \textit{absolute neighborhood retracts} (ANRs), \cite{Hanner, Milnor_ANRS}.
\end{example}

\subsection{Local contractibility}

\label{sec:local_contractibility}

\begin{definition}
    A geometric morphism $f \colon \X \rightarrow \Y$ of topoi is \textit{locally contractible} if the $\Y$-functor $F^* \colon \Omega_{\Y} \rightarrow f_* \Omega_{\X}$ admits a  left adjoint. We say an $\infty$-topos $\X$ is locally contractible if the morphism $\X\rightarrow\An$ is locally contractible. We write $\Top^{loc}$ for the full subcategory of $\Top$ spanned by the locally contractible $\infty$-topoi.
\end{definition}

If $f \colon  \X \rightarrow \Y$ is locally contractible, we generally write $F_\sharp \colon f_* \Omega_{\X} \rightarrow \Omega_{\Y}$ for the left adjoint of $F^*$. Note that on global sections we obtain a left adjoint $f_\sharp \colon \X \rightarrow \Y$ to $f^*$. If $\Y=\An$, we see that $\pfs{f}(1)$ can be identified with $\sh{\X}$.

\begin{remark}\label{rmk:projectionformulaloccontr}
    The condition for $f$ to be locally contractible has a purely external characterization. It is equivalent to $f^*$ admitting a left adjoint $f_\sharp$ that satisfies the projection formula
    \[ f_\sharp(U \times_{f^*W} f^*V) \xrightarrow{\sim} f_\sharp(U) \times_W V \]
    for all $V \rightarrow W$ in $\X$ and $U \rightarrow f^*V$ in $\Y$, see \cite[Remark 3.3.1.3]{martini2025presentabilitytopoiinternalhigher}.
\end{remark}

\begin{example}
    Every \'etale geometric morphism $\pi_U \colon \Y_{/U} \rightarrow \Y$ is locally contractible. In fact, \'etale geometric morphisms to $\Y$ are characterised as precisely those geometric morphisms $f \colon \X \rightarrow \Y$ for which $f_\sharp \colon \X \rightarrow \Y$ is conservative, see \cite[Example 3.3.1.4]{martini2025presentabilitytopoiinternalhigher}.
\end{example}

Suppose that $f:\X \rightarrow \An$ is locally contractible. Then the equivalence $\X\simeq\X_{/1}$ implies that $f_\sharp$ factors through a functor a functor $\pfs{\eta}:\X\rightarrow\An_{/\sh{\X}}$
As discussed in \cite[p. 1405]{lurie2017higher}, $\eta_\sharp$ admits a right adjoint $\eta^*$, which can be describe informally with the formula $\eta^*(y) \simeq f^*(y) \times_{f^*f_\sharp(1)} 1 $. From this description and universality of colimits, it is clear that $\eta^*$ is cocontinuous. Hence $\pb{\eta}$  has a further right adjoint $\eta_*$, and the pair $ (\eta^*,\eta_*)$ describes a geometric morphism
\[ \eta \colon \X \rightarrow \An_{/\sh{\X}}. \]
The projection formula for $\pfs{\eta}$ can be deduced from the one of $\pfs{f}$, and therefore $\eta$ is a locally contractible geometric morphism. The main result of \cite[Appendix A.1]{lurie2017higher} shows that $\pb{\eta}$ is fully faithful, with essential image given by the locally constant objects in $\X$.

\begin{remark}\label{rmk:etanatural}
    The morphism $\eta$ is in fact part of a natural transformation $$\eta:\text{id}\rightarrow\Fun(\sh{(-)},\An)$$ of functors $\Top^{loc}\rightarrow\Top^{loc}$. This follows for example from the discussion in \cite[Page 4]{hoyois2018higher}.
    If $f \colon \X \rightarrow \Y$ is a morphism of locally contractible topoi, we write $\sh{f} \colon \An_{/\sh{\X}} \rightarrow \An_{/\sh{\Y}}$ for the induced geometric morphism. If we want to consider $\sh{f}$ as a geometric morphism of $\An_{/\sh{Y}}$-topoi, we write it as $\sh{F} \colon \pf{\sh{f}} \Omega_{\An_{/\sh{X}}} \rightarrow \Omega_{\An_{/\sh{\Y}}}$, in accordance with \cref{not:geometric_morphism_notation}.
\end{remark}

\begin{lemma}\label{lemma:shomlocconstislocconst}
    Let $\X$ be a locally of constant shape $\infty$-topos, and let $F$ and $G$ be any two functors $\sh{\X}\rightarrow\An$. Then the canonical map
    $$\pb{\eta}\sHom{\An_{/\sh{\X}}}{F}{G}\rightarrow\sHom{\X}{\pb{\eta}F}{\pb{\eta}G}$$
    is an isomorphism. In particular, we get that $\sHom{\X}{\pb{\eta}F}{\pb{\eta}G}$ is locally constant.
\end{lemma}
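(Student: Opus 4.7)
The plan is to leverage the projection formula for the locally contractible geometric morphism $\eta \colon \X \to \An_{/\sh{\X}}$. Since $\X$ is locally contractible, \cref{rmk:projectionformulaloccontr} guarantees that $\pb{\eta}$ admits a left adjoint $\pfs{\eta}$ satisfying $\pfs{\eta}(V \times \pb{\eta}W) \simeq \pfs{\eta}(V) \times W$ for all $V \in \X$ and $W \in \An_{/\sh{\X}}$.

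With this in hand, I would check that the canonical comparison map is an equivalence by testing it on representables. For any $V \in \X$, the adjunction $\pfs{\eta} \dashv \pb{\eta}$ combined with the internal hom adjunction in $\An_{/\sh{\X}}$ identifies
\[ \Map_{\X}(V, \pb{\eta}\sHom{\An_{/\sh{\X}}}{F}{G}) \simeq \Map_{\An_{/\sh{\X}}}(\pfs{\eta}(V) \times F, G), \]
while the internal hom adjunction in $\X$ followed by $\pfs{\eta} \dashv \pb{\eta}$ identifies
\[ \Map_{\X}(V, \sHom{\X}{\pb{\eta}F}{\pb{\eta}G}) \simeq \Map_{\An_{/\sh{\X}}}(\pfs{\eta}(V \times \pb{\eta}F), G). \]
A direct diagram chase shows that under these identifications, the map induced by the canonical comparison is precomposition with the projection formula morphism $\pfs{\eta}(V \times \pb{\eta}F) \to \pfs{\eta}(V) \times F$, which is invertible by local contractibility. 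Applying the Yoneda lemma finishes the first part.

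The final assertion that $\sHom{\X}{\pb{\eta}F}{\pb{\eta}G}$ is locally constant follows immediately: the equivalence above exhibits it as an object in the essential image of $\pb{\eta}$, which (as recalled just before the statement) agrees with the full subcategory of locally constant objects of $\X$ by \cite[Appendix A.1]{lurie2017higher}.

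The only delicate step is verifying that the canonical comparison map is carried to precomposition with the projection formula morphism under the adjunction identifications above. Tracking the evaluation $\sHom{\An_{/\sh{\X}}}{F}{G} \times F \to G$ through the counit of $\pfs{\eta} \dashv \pb{\eta}$ is a standard exercise in unwinding adjunctions, and no further ingredient is required.
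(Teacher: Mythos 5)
Your argument is correct and is exactly the route the paper takes: its proof reads ``This follows immediately from the projection formula for $\pfs{\eta}$,'' and your Yoneda/adjunction computation is precisely the unwinding of that one-line claim, including the correct identification of the essential image of $\pb{\eta}$ with the locally constant objects for the final assertion.
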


\begin{proof}
    This follows immediately from the projection formula for $\pfs{\eta}$.
\end{proof}

\begin{lemma}\label{Xloccontr=>locconsthyp}
    Let $\X$ be a locally contractible $\infty$-topos. Then locally constant objects in $\X$ are hypercomplete. 
\end{lemma}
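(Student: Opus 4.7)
The plan is to use the locally contractible geometric morphism $\eta \colon \X \to \An_{/\sh{\X}}$ recalled in the preceding discussion. Recall that $\pb{\eta}$ is fully faithful with essential image the locally constant objects of $\X$, and that $\eta$ is itself locally contractible, providing a further left adjoint $\pfs{\eta} \dashv \pb{\eta}$. Every locally constant object therefore takes the form $\pb{\eta} G$ for some $G \in \An_{/\sh{\X}}$, and my strategy is to present $\pb{\eta} G$ as a limit of hypercomplete objects in $\X$.

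First, $\An_{/\sh{\X}}$ is hypercomplete, by the general fact that slices of hypercomplete topoi remain hypercomplete applied to the hypercompleteness of $\An$. Consequently Postnikov towers converge in $\An_{/\sh{\X}}$, so $G \simeq \lim_n \tau_{\leq n} G$. Since $\pb{\eta}$ is a right adjoint to $\pfs{\eta}$, it preserves all limits, giving $\pb{\eta} G \simeq \lim_n \pb{\eta}(\tau_{\leq n} G)$.

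Next, I would observe that each $\pb{\eta}(\tau_{\leq n} G)$ is $n$-truncated in $\X$. This is a direct consequence of the adjunction identity $\Map_{\X}(V, \pb{\eta} U) \simeq \Map_{\An_{/\sh{\X}}}(\pfs{\eta} V, U)$: if $U$ is $n$-truncated then the right-hand side is $n$-truncated for every $V$, forcing $\pb{\eta} U$ to be $n$-truncated. Any $n$-truncated object is automatically hypercomplete (it is local with respect to any morphism that becomes an equivalence after $n$-truncation, in particular with respect to $\infty$-connective morphisms). Since the class of hypercomplete objects is closed under limits in $\X$, we conclude that $\pb{\eta} G$ is hypercomplete, as desired.

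The argument is essentially formal once $\eta$ and its left adjoint are in hand. The only step that genuinely uses local contractibility of $\X$ is the preservation of truncated objects by $\pb{\eta}$, which relies on the existence of $\pfs{\eta}$; a general left exact left adjoint would not suffice. I do not expect any serious obstacle beyond these routine verifications.
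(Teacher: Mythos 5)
Your argument is sound in outline and is essentially a self-contained version of the result the paper invokes: the paper's proof consists of the single citation \cite[Corollary A.1.17]{lurie2017higher}, and your decomposition (write a locally constant object as $\pb{\eta}G$, resolve $G$ by its Postnikov tower, use that $\pb{\eta}$ preserves limits because it admits the left adjoint $\pfs{\eta}$, and conclude from hypercompleteness of truncated objects together with closure of hypercomplete objects under limits) is the standard argument behind that corollary. So the route is fine and every fact you need is true.

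One justification, however, is incorrect as stated: hypercompleteness of $\An_{/\sh{\X}}$ does \emph{not} imply that Postnikov towers converge there. These are genuinely different conditions---Postnikov completeness implies hypercompleteness, but not conversely, and there exist hypercomplete $\infty$-topoi in which Postnikov towers fail to converge. The convergence you need is nevertheless true, because $\An_{/\sh{\X}}$ is locally of homotopy dimension $\leq 0$ (it is generated under colimits by the points of $\sh{\X}$, whose slices are copies of $\An$), so \cite[Proposition 7.2.1.10]{lurie2009higher} applies; alternatively, one checks $G \simeq \varprojlim_n \tau_{\leq n}G$ fiberwise over $\sh{\X}$ using Postnikov convergence in $\An$. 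A second, harmless, slip: any left exact functor between $\infty$-categories with finite limits preserves $n$-truncated objects (\cite[Proposition 5.5.6.16]{lurie2009higher}), so the truncation step does not actually require the existence of $\pfs{\eta}$. Where local contractibility genuinely enters is in identifying the locally constant objects with the essential image of $\pb{\eta}$ and in making $\pb{\eta}$ limit-preserving; your closing remark inverts this.
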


\begin{proof}
    This follows from \cite[Corollary A.1.17]{lurie2017higher}.
\end{proof}

\begin{remark}\label{rmk:cptvalues&cptstalks}
    Let $\X$ be a locally contractible $\infty$-topos, and let $\sh{\X}\xrightarrow{F}\An$ be any functor. Then $\pb{\eta}(F)\in\X$ has compact values if and only if, for any point $[0]\xrightarrow{x}\sh{\X}$, $\pb{x}F\in\An$ is a compact object.
\end{remark}

\section{Fiber theorems}\label{sec:fiber_theorems}

In this section, we will study a relative generalization of that of a cell-like map in the situation of a commuting diagram of $\infty$-topoi
\begin{center}
    \begin{tikzcd}
        \X \ar[rr, "f"] \ar[dr, "p"'] && \X' \ar[dl, "q"]\\
        & \Y &
    \end{tikzcd}
\end{center}
such that when $q$ is an equivalence, we recover Lurie's notion of cell-like maps. Apart from merely generalizing, our methods also give a new proof of Lurie's characterization of maps between ANR's inducing cell-like maps on sheaf topoi, which is actually more general a well. To motivate, let us recall the notion of a shape equivalence of $\infty$-topoi.

Let $f \colon \X \rightarrow \X'$ be a geometric morphism. If $p \colon \X \rightarrow \An$ and $q \colon \X' \rightarrow \An$ denote the unique geometric morphisms, then since $qf \simeq p$ we get a map 
\begin{equation*}
    \pf{q}\pb{q}\xrightarrow{\text{unit}}\pf{q}\pf{f}\pb{f}\pb{q}\simeq \pf{p}\pb{p}
\end{equation*}
which corresponds under $\Fun^{\mathrm{acc,lex}}(\An,\An)\op \simeq \Pro(\An)$ to a map $\sh{\X} \rightarrow \sh{\X'} \in \Pro(\An)$. Following \cite[Rem. 7.1.6.5.]{lurie2009higher} we say that $f$ is a \textit{shape equivalence} if either the natural transformation written above or equivalently the map $\sh{\X} \rightarrow \sh{\X'}$ is an equivalence. It turns out to be more straightforward to generalize the first perspective to a general base.

In the following, we will often work in the category $\Top_{/Y}$ for some $\infty$-topos $\Y$. The main result of \cite{Martini2023a} identifies this category with the category of $\infty$-topoi \textit{internal} to $\Y$, a perspective that is often valuable. For a point $y \colon \An \rightarrow \Y$ and $\X \in \Top_{/\Y}$ we will use the notation $\X_y$ for the pullback $\X \times_{\Y} \An$ along the point $y$ and call it \textit{the fiber of $\X$ over $y$}. One of our goals is to study aforementioned relative generalization of the notion of a shape equivalence and how it interacts with passing to fibers over $\Y$.

\begin{definition}\label{def:relshapeeq}
    Let $\Y$ be an $\infty$-topos, and let $\X\xrightarrow{p}\Y$, $\X'\xrightarrow{q}\Y$ be geometric morphisms. Let $\X\xrightarrow{f}\X'$ be a geometric morphism over $\Y$. We say that $f$ is a \textit{relative shape equivalence over $\Y$} if the natural transformation
        \begin{equation}\label{relativeshapeeq}
            \pf{q}\pb{q}\xrightarrow{\text{unit}}\pf{q}\pf{f}\pb{f}\pb{q}\simeq \pf{p}\pb{p}
        \end{equation}
        is invertible.
\end{definition}

For $\Y = \An$, this notion specialises to the classical notion of a shape equivalence.

\begin{remark}\label{rmk:cell-likeisrelshapeeq}
    Suppose we are in the same situation as in \cref{def:relshapeeq}. When $\X'=\Y$ and $q=\text{id}_{\Y}$, we see that $f$ is a relative shape equivalence over $\Y$ if and only if $\pb{f}$ is fully faithful. Therefore, when $f$ is also assumed to be proper, we see that $f$ is a relative shape equivalence over $\Y$ if and only if $f$ is cell-like in the sense of Lurie \cite[Def. 7.3.6.1.]{lurie2009higher}.
\end{remark}

\begin{definition}\label{def:hershapeq/Y}
    Let $\Y$ be an $\infty$-topos, and let $\X\xrightarrow{p}\Y$, $\X'\xrightarrow{q}\Y$ be geometric morphisms. Let $\X\xrightarrow{f}\X'$ be a geometric morphism over $\Y$. We say that $f$ is a \textit{hereditary shape equivalence over $\Y$} if, for any $U\in\Y$, the geometric morphism
    $$\X_{/\pb{p}U}\xrightarrow{f_{/U}}\X'_{/\pb{q}U}$$
    is a shape equivalence. When $\X'=\Y$ and $q=\text{id}_{\Y}$, we simply say $f$ is a \textit{hereditary shape equivalence}.
\end{definition}

We emphasize that we really mean the map $\X_{/\pb{p}U}\xrightarrow{f_{/U}}\X'_{/\pb{q}U}$ to be a shape equivalence relative to $\An$ and \textit{not} relative to $\Y$, although it certainly lives in $\Top_{/Y}$.

\begin{lemma}\label{lem:hereditaryshapeeqongenerators}
	Let $\Y$ be an $\infty$-topos, and let $\X\xrightarrow{p}\Y$, $\X'\xrightarrow{q}\Y$ be geometric morphisms. Let $\X\xrightarrow{f}\X'$ be a geometric morphism over $\Y$. Let $\mathcal{B}\subseteq\Y$ be a family of objects generating $\Y$ under colimits. Then $f$ is a hereditary shape equivalence over $\Y$ if and only if, for each $V\in\mathcal{B}$, the geometric morphism $\X_{/\pb{p}V}\xrightarrow{f_{/V}}\X'_{/\pb{q}V}$ is a shape equivalence. 
\end{lemma}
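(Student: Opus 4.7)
The forward direction is immediate, since $\Y$ itself generates $\Y$ under colimits. For the converse, the strategy is to rewrite the condition that $f_{/U}$ is a shape equivalence as the statement that a certain natural transformation of functors $\An \to \An$ is an equivalence, in a way that is manifestly continuous in $U\op$.

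First, I would identify the shape of $\X_{/\pb{p}U}$ concretely. Using the triple adjunction $(\pi_{\pb{p}U})_{\sharp} \dashv \pi_{\pb{p}U}^{\ast} \dashv (\pi_{\pb{p}U})_{\ast}$ for the \'etale geometric morphism and the fact that $\pb{p}$ preserves products, one computes that for any $S \in \An$,
\[ (p \circ \pi_{\pb{p}U})_{\ast}(p \circ \pi_{\pb{p}U})^{\ast}(S) \;\simeq\; \Map_{\X}(\pb{p}U,\pb{p}S), \]
and similarly for $\X'_{/\pb{q}U}$. The natural transformation $\pf{(q_U)}\pb{(q_U)} \to \pf{(p_U)}\pb{(p_U)}$ defining the shape equivalence condition for $f_{/U}$ is then identified with the map
\[ \alpha_U(S) \colon \Map_{\X'}(\pb{q}U,\pb{q}S) \lto \Map_{\X}(\pb{p}U,\pb{p}S) \]
induced by $\pb{f}$ together with the equivalence $\pb{f}\pb{q} \simeq \pb{p}$ coming from $qf \simeq p$.

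Next, I would observe that for fixed $S \in \An$, both sides of $\alpha_{(-)}(S)$ are functors $\Y\op \to \An$ that send colimits in $\Y$ to limits in $\An$: indeed, $\pb{p}$ and $\pb{q}$ preserve colimits as left adjoints, and $\Map_{\X}(-,\pb{p}S)$, $\Map_{\X'}(-,\pb{q}S)$ turn colimits into limits. Hence if $U \simeq \colim_i V_i$ with $V_i \in \mathcal{B}$, the map $\alpha_U(S)$ is the limit over $i$ of the maps $\alpha_{V_i}(S)$, and is therefore an equivalence as soon as each $\alpha_{V_i}(S)$ is. Since every object of $\Y$ is such a colimit and this holds for every $S \in \An$, the natural transformation $\pf{(q_U)}\pb{(q_U)} \to \pf{(p_U)}\pb{(p_U)}$ is an equivalence for all $U \in \Y$, proving that $f$ is a hereditary shape equivalence over $\Y$.

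The main (mild) obstacle is the identification of the shape of a slice topos in the first paragraph; once this computation is in place, the colimit argument on the generating family is formal. An alternative would be to phrase everything in terms of the $\Y$-shape $F_{\ast}F^{\ast} \colon \Omega_{\Y} \to \Omega_{\Y}$ of the geometric morphism of $\Y$-topoi induced by $f$, but since \cref{def:hershapeq/Y} asks for shape equivalences over $\An$ (not over $\Y$), the external description above seems the most direct route.
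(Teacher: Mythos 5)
Your proof is correct. It rests on the same underlying phenomenon as the paper's argument --- that the shape-equivalence condition for $f_{/U}$ depends ``continuously'' on $U$ --- but implements it differently. The paper writes $f_{/U}$ as a colimit in $\Top$ of the morphisms $f_{/V_i}$ via descent and then invokes the fact that the shape functor $\mathrm{sh} \colon \Top \rightarrow \Pro(\An)$ preserves colimits. You instead unwind the defining unit transformation into the concrete statement that, for every $S \in \An$, the map $\Map_{\X'}(\pb{q}U,\pb{q}S) \rightarrow \Map_{\X}(\pb{p}U,\pb{p}S)$ induced by $\pb{f}$ (under $\pb{f}\pb{q} \simeq \pb{p}$) is an equivalence; your identifications of the global sections of the slice topoi and of the unit map with the action of $\pb{f}$ on mapping spaces are both correct, and the observation that both sides carry colimits in $U$ to limits then finishes the argument. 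What your route buys is self-containedness: it replaces the external input that $\mathrm{sh}$ preserves colimits of topoi (itself a descent statement) by an elementary corepresentability computation, at the cost of a somewhat longer write-up. Both versions use, as the paper implicitly does, that a family generating $\Y$ under colimits has the property that every object is the colimit of a diagram valued in that family.
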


\begin{proof}
    For any $U\in\Y$, using descent we may write the map $\X_{/\pb{p}U}\xrightarrow{f_{/U}}\X'_{/\pb{q}U}$ as a colimit of maps of the form $f_{/V}$, where $V$ lies in $\mathcal{B}$. The claim then follows from the fact that the functor $\text{sh}: \Top\rightarrow\Pro(\An)$ preserves colimits.
\end{proof}
    
\begin{corollary}
    Let $Y$ be any topological space, and let $\Y=\Shsp{Y}$ be the $\infty$-topos of sheaves of animae on $Y$. Let $\X\xrightarrow{p}\Y$, $\X'\xrightarrow{q}\Y$ be geometric morphisms, and let $\X\xrightarrow{f}\X'$ be a geometric morphism over $\Y$. Then $f$ is a hereditary shape equivalence over $\Y$ if and only if, for any $U\hookrightarrow Y$ open subset, the geometric morphism $f_{/U}: \X_{/\pb{p}U}\rightarrow\X'_{/\pb{q}U}$ is a shape equivalence.
\end{corollary}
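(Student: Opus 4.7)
My plan is to deduce this corollary as an immediate consequence of \cref{lem:hereditaryshapeeqongenerators}. I would apply that lemma with the family $\mathcal{B} \subseteq \Shsp{Y}$ consisting of the sheaves associated to open subsets $U \hookrightarrow Y$, that is, the sheafifications of the representable presheaves on the poset $\Op(Y)$ of open subsets of $Y$ equipped with its standard Grothendieck topology.

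The only thing to verify beyond what is already stated in \cref{lem:hereditaryshapeeqongenerators} is that $\mathcal{B}$ generates $\Shsp{Y}$ under colimits. Since $\Shsp{Y}$ is a left exact accessible Bousfield localization of $\PSh{\Op(Y)}$, the sheafification functor $L \colon \PSh{\Op(Y)} \rightarrow \Shsp{Y}$ is a left adjoint and in particular preserves colimits. Every presheaf on $\Op(Y)$ is canonically a colimit of representable presheaves (indexed by its category of elements), so applying $L$ exhibits every sheaf as a colimit of elements of $\mathcal{B}$. This is precisely the generation statement required by \cref{lem:hereditaryshapeeqongenerators}.

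There is no substantial obstacle here: the corollary is essentially a translation of \cref{lem:hereditaryshapeeqongenerators} into the familiar geometric language of open covers on topological spaces, with all the shape-theoretic content already absorbed into the lemma. The only work is the standard bookkeeping around generation of the sheaf topos by sheafified representables.
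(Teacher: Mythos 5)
Your proposal is correct and matches the paper's argument exactly: the corollary is deduced from \cref{lem:hereditaryshapeeqongenerators} by taking $\mathcal{B}$ to be the sheaves represented by open subsets of $Y$, which generate $\Shsp{Y}$ under colimits. The paper states this in one line without spelling out the generation argument, but your justification via sheafification of the canonical colimit of representables is the standard one and is what is implicitly intended.
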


\begin{proof}
    This follows from \cref{lem:hereditaryshapeeqongenerators} and the fact that $\Y$ is generated by the sheaves represented by the open subsets of $Y$. 
\end{proof}

\begin{lemma}\label{hershapeeqiffrelunitinvonconstants} Let $\Y$ be an $\infty$-topos, and let $\X\xrightarrow{p}\Y$, $\X'\xrightarrow{q}\Y$ be geometric morphisms. Let $\X\xrightarrow{f}\X'$ be a geometric morphism over $\Y$. Then the following are equivalent.

\begin{enumerate}[(1)]
    \item $f$ is a hereditary shape equivalence over $\Y$;
    \item For any constant object $F\in\Y$, the map 
    $$\pf{q}\pb{q}F\rightarrow\pf{p}\pb{p}F$$
    obtained by evaluating the natural transformation (\ref{relativeshapeeq}) at $F$ is invertible.
\end{enumerate}
\end{lemma}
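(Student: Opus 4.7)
The strategy is to unwind both conditions into the same family of mapping-space equivalences parametrized by $U \in \Y$ and $A \in \An$. Let $r \colon \Y \to \An$ be the unique geometric morphism; constant objects of $\Y$ are then precisely those of the form $\pb{r}A$ for some $A \in \An$.

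First, I would analyse condition (2). It asserts that $\pf{q}\pb{q}\pb{r}A \to \pf{p}\pb{p}\pb{r}A$ is an equivalence in $\Y$ for every $A \in \An$. By the Yoneda lemma, this holds if and only if, for every $U \in \Y$ and every $A \in \An$, the induced map $\Map_\Y(U, \pf{q}\pb{q}\pb{r}A) \to \Map_\Y(U, \pf{p}\pb{p}\pb{r}A)$ is an equivalence. Using the $(\pb{q},\pf{q})$ and $(\pb{p},\pf{p})$ adjunctions together with the identification $\pb{f}\pb{q} \simeq \pb{p}$, this is equivalent to requiring that $\pb{f}$ induces an equivalence
\[
(\ast)_{U,A}\colon\quad \Map_{\X'}(\pb{q}U, \pb{q}\pb{r}A) \lto \Map_\X(\pb{p}U, \pb{p}\pb{r}A).
\]

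Next I would treat condition (1). The unique geometric morphism $\X_{/\pb{p}U} \to \An$ factors as $\X_{/\pb{p}U} \xrightarrow{\pi_{\pb{p}U}} \X \xrightarrow{rp} \An$, so a direct computation of sections of the relevant projection yields
\[
\Map_{\X_{/\pb{p}U}}\bigl(1,\, \pi_{\pb{p}U}^{\ast}\pb{p}\pb{r}A\bigr) \simeq \Map_{\X}(\pb{p}U, \pb{p}\pb{r}A),
\]
and analogously for $\X'_{/\pb{q}U}$. Under the identification $\Pro(\An) \simeq \Fun^{\mathrm{acc},\lex}(\An, \An)\op$, the condition that $f_{/U}$ is a shape equivalence translates to the requirement that $(\ast)_{U,A}$ is an equivalence for this $U$ and every $A \in \An$. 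Consequently (1) holds if and only if $(\ast)_{U,A}$ is an equivalence for every $U \in \Y$ and every $A \in \An$, which matches the reformulation of (2).

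The remaining piece is to verify that the natural transformation of (\ref{relativeshapeeq}), evaluated at $F = \pb{r}A$ and taken to sections over $U$, agrees with the map coming from $\sh{f_{/U}}$ under the above identifications. This is a direct but slightly tedious check: in both cases the underlying operation is post-composition by $\pb{f}$ on mapping spaces, modulo the canonical identification $\pb{f}\pb{q} \simeq \pb{p}$. Indeed, $\pb{f_{/U}}$ acts on underlying objects in $\X'$ exactly as $\pb{f}$, and the unit of $(\pb{f},\pf{f})$ on a morphism is just $\pb{f}$ applied to that morphism. I expect this final bookkeeping to be the only place where care is required; no conceptual difficulty arises.
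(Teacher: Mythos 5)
Your proposal is correct and follows essentially the same route as the paper: both arguments reduce each condition to the single family of equivalences $\Map_{\X'}(\pb{q}U,\pb{q}F)\to\Map_{\X}(\pb{p}U,\pb{p}F)$ indexed by $U\in\Y$ and constant $F$, you via Yoneda and the $(\pb{q},\pf{q})$, $(\pb{p},\pf{p})$ adjunctions, the paper via the equivalent device of taking sections over $U$ and applying \'etale base change. The final compatibility check you defer is exactly the commuting diagram the paper records, and your identification of both maps as ``apply $\pb{f}$'' under the adjunctions is the correct justification.
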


\begin{proof}
    Indeed, let $U$ be any object in $\Y$, and consider the diagram 
    $$
    \begin{tikzcd}
\X_{/\pb{p}U} \arrow[d, "\rho_U"] \arrow[r, "p_{/U}"] & \Y_{/U} \arrow[d, "\pi_U"] & \X'_{/\pb{q}U} \arrow[d, "\sigma_U"] \arrow[l, "q_{/U}"'] \\
\X \arrow[rd, "b"'] \arrow[r, "p"]                    & \Y \arrow[d, "a"]          & \X' \arrow[ld, "c"] \arrow[l, "q"']                       \\
                                                      & \An                       &                                                          
\end{tikzcd}
    $$ 
    where $\pi_U$ is \'etale geometric morphism corresponding to $U$, the squares are pullbacks of $\infty$-topoi, and the lower arrows pointing towards $\An$ are the unique geometric morphisms. Suppose $A$ is any anima, and let $\pb{a}A\in\Y$ be the associated constant object in $\Y$. The map $\pf{q}\pb{q}\pb{a}A\xrightarrow{\text{unit}} \pf{p}\pb{p}\pb{a}A$ is invertible if and only if it is invertible after taking sections at $U$, for all $U\in\Y$. Note that taking sections at $U \in Y$ means to restrict to $\Y_{/U}$ and apply the unique geometric morphism $\Y_{/U} \rightarrow \An$, which is the composition $\pf{a} \pf{{\pi_U}} \pb{{\pi_U}}$. We have a commuting diagram
    $$
    \begin{tikzcd}
\pf{a}\pf{{\pi_U}}\pb{{\pi_U}}\pf{q}\pb{q}\pb{a}A \arrow[d, "\simeq"] \arrow[rr] &  & \pf{a}\pf{{\pi_U}}\pb{{\pi_U}}\pf{p}\pb{p}\pb{a}A \arrow[d, "\simeq"]      \\
\pf{a}\pf{{\pi_U}}\pf{{q_{/U}}}\pb{\sigma_U}\pb{q}\pb{a}A \arrow[d, "\simeq"]     &  & \pf{a}\pf{{\pi_U}}\pf{{p_{/U}}}\pb{\rho_U}\pb{p}\pb{a}A \arrow[d, "\simeq"] \\
\pf{c}\pf{{\sigma_U}}\pb{{\sigma_U}}\pb{c}A \arrow[rr]                                                         &  & \pf{b}\pf{{\rho_U}}\pb{{\rho_U}}\pb{b}A                                                            
\end{tikzcd}
    $$
    where the upper horizontal arrow is obtained by applying $\pf{a}\pf{{\pi_U}}\pb{{\pi_U}}$ to (\ref{relativeshapeeq}), the lower horizontal arrow is defined analogously to (\ref{relativeshapeeq}) via the unit of the adjunction $\pb{{f_{/U}}}\dashv\pf{{f_{/U}}}$, and the first vertical isomorphisms are given by \'etale basechange (see \cite[Lemma 3.2.1.9]{martini2025presentabilitytopoiinternalhigher}). Therefore, the upper horizontal map is invertible for all $U\in\Y$ and $A\in\An$ if and only if $f_{/U}$ is a shape equivalence for all $U\in\Y$.
\end{proof}

\begin{definition}\label{def:ptwisesheq}
    Let $\Y$ be an $\infty$-topos, and let $\X\xrightarrow{p}\Y$, $\X'\xrightarrow{q}\Y$ be geometric morphisms. Let $\X\xrightarrow{f}\X'$ be a geometric morphism over $\Y$. We say that $f$ is a \textit{pointwise shape equivalence over $\Y$} if, for any point $\An\xrightarrow{y}\Y$, the geometric morphism 
    $$\X_{y}\xrightarrow{f_y}\X'_y$$
    is a shape equivalence.
\end{definition}

\begin{remark}\label{rmk:relsheqimpliesptwisesheq}
    Let $\Y$ be an $\infty$-topos, and let $\X\xrightarrow{p}\Y$, $\X'\xrightarrow{q}\Y$ be proper geometric morphisms. Let $\X\xrightarrow{f}\X'$ be a relative shape equivalence over $\Y$. Then $f$ is a pointwise shape equivalence over $\Y$.  The proof of our claim is identical to that of the "only if" part of \cite[Proposition 7.3.6.4]{lurie2009higher}, so we omit it. This is essentially a consequence of proper basechange and some additional small observations.
\end{remark}

\cref{rmk:relsheqimpliesptwisesheq} has the following partial converse.

\begin{lemma}\label{lem:partconvrmk:relsheqimpliesptwisesheq}
    Let $\Y$ be an $\infty$-topos, and let $\X\xrightarrow{p}\Y$, $\X'\xrightarrow{q}\Y$ be proper geometric morphisms. Let $\X\xrightarrow{f}\X'$ be a pointwise shape equivalence over $\Y$. Assume that $\Y\hyp$ has enough points, and that $F\in\Y$ is an object such that $\pb{p}F$ and $\pb{q}F$ are hypercomplete. Then the morphism
    $$\pf{q}\pb{q}F\rightarrow\pf{p}\pb{p}F$$
    is invertible.
\end{lemma}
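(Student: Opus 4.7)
The plan is to reduce the invertibility of $\pf{q}\pb{q}F \to \pf{p}\pb{p}F$ to a pointwise statement using that $\Y\hyp$ has enough points, and then invoke proper basechange to translate each stalk into the shape-equivalence assertion for $f_y$.

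First I would observe that both $\pf{p}\pb{p}F$ and $\pf{q}\pb{q}F$ are hypercomplete: $\pb{p}F$ and $\pb{q}F$ are hypercomplete by hypothesis, and pushforward along any geometric morphism preserves hypercomplete objects, as recalled in the subsection on points and hypercompletions. Since the inclusion $\Y\hyp \hookrightarrow \Y$ is fully faithful, the given map is invertible in $\Y$ if and only if its image in $\Y\hyp$ is. As $\Y\hyp$ has enough points, and the points of $\Y\hyp$ correspond to the points of $\Y$ (since $\An$ is itself hypercomplete), it therefore suffices to show that for every point $y \colon \An \to \Y$, the induced morphism $\pb{y}\pf{q}\pb{q}F \to \pb{y}\pf{p}\pb{p}F$ is an equivalence in $\An$.

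Fix such a $y$ and write $p_y \colon \X_y \to \An$, $q_y \colon \X'_y \to \An$ and $f_y \colon \X_y \to \X'_y$ for the pullbacks of $p$, $q$ and $f$ along $y$. Properness of $p$ and $q$ gives basechange equivalences $\pb{y}\pf{p}\pb{p}F \simeq \pf{(p_y)}\pb{(p_y)}\pb{y}F$ and $\pb{y}\pf{q}\pb{q}F \simeq \pf{(q_y)}\pb{(q_y)}\pb{y}F$. By naturality of basechange, these identifications carry the map in question to the shape-equivalence morphism $\pf{(q_y)}\pb{(q_y)}\pb{y}F \to \pf{(p_y)}\pb{(p_y)}\pb{y}F$ associated with the factorization $p_y \simeq q_y \circ f_y$, evaluated on the anima $\pb{y}F$. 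Since $f_y$ is a shape equivalence by hypothesis, this map is an equivalence, and we are done.

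The main obstacle is the naturality check in the last step: one must verify that pulling back the unit $\mathrm{id} \to \pf{f}\pb{f}$ of $\pb{f} \dashv \pf{f}$ along $y$, under the proper basechange identifications, corresponds to the unit of $\pb{(f_y)} \dashv \pf{(f_y)}$. This is a routine exercise using that the concatenation of two pullback squares is again a pullback square, but it is the only piece with real content beyond the structural reduction outlined above.
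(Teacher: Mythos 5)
Your proposal is correct and follows essentially the same route as the paper's proof: establish hypercompleteness of $\pf{q}\pb{q}F$ and $\pf{p}\pb{p}F$ via preservation of hypercompleteness under pushforward, reduce to stalks using that $\Y\hyp$ has enough points, and conclude by proper basechange and the pointwise shape-equivalence hypothesis. The paper leaves the final naturality check implicit, exactly as you flag; your writeup simply spells out the same argument in more detail.
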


\begin{proof}
    Since $\pb{p}F$ and $\pb{q}F$ are hypercomplete, and lower stars preserve hypercompleteness, we have that $\pf{q}\pb{q}F$ and $\pf{p}\pb{p}F$ are hypercomplete. Therefore, since $\Y\hyp$ has enough points, to prove the lemma we only need to show that the map $\pf{q}\pb{q}F\rightarrow\pf{p}\pb{p}F$ is invertible after pulling back along any point of $\Y$. Using proper basechange, one sees that this last statement follows immediately from the assumption that $f$ is a pointwise shape equivalence over $\Y$.
\end{proof}

\begin{theorem}\label{thm:proper.relsheqiffhersheq}
    Let $\Y$ be an $\infty$-topos, and let $\X\xrightarrow{p}\Y$, $\X'\xrightarrow{q}\Y$ be proper geometric morphisms. Let $\X\xrightarrow{f}\X'$ be a geometric morphism over $\Y$. Consider the following conditions.
    \begin{enumerate}[(1)]
        \item $f$ is a relative shape equivalence over $\Y$;
        \item $f$ is a hereditary shape equivalence over $\Y$;
        \item $f$ is a pointwise shape equivalence over $\Y$.
    \end{enumerate}
    Then (1) is equivalent to (2), and (1) implies (3). Moreover, if constant objects in $\X$ and $\X'$ are hypercomplete and $\Y\hyp$ has enough points, then (1) and (2) are equivalent to (3). 
\end{theorem}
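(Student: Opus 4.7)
The plan is to prove $(1) \Leftrightarrow (2)$ and $(1) \Rightarrow (3)$ unconditionally, and then establish $(3) \Rightarrow (1)$ under the additional hypotheses. The implication $(1) \Rightarrow (2)$ is immediate from \cref{hershapeeqiffrelunitinvonconstants}: invertibility of the transformation (\ref{relativeshapeeq}) on all objects of $\Y$ forces invertibility on constant ones. The implication $(1) \Rightarrow (3)$ is handled in \cref{rmk:relsheqimpliesptwisesheq}, which uses proper base change to identify the pullback of (\ref{relativeshapeeq}) along a point $y \colon \An \to \Y$ with the shape transformation of the fiber $f_y$.

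The heart of the argument is $(2) \Rightarrow (1)$. I would work internally. Via \cref{thm:characterisation_of_internal_topoi}, the morphisms $p$ and $q$ encode $\Y$-topoi, and the natural transformation (\ref{relativeshapeeq}) is the global-section value of a $\Y$-natural transformation $\alpha \colon Q_{*} Q^{*} \Rightarrow P_{*} P^{*}$ between the $\Y$-shapes. Since $p$ and $q$ are proper, the Martini-Wolf internal characterization of proper maps shows that $P_{*}$ and $Q_{*}$ preserve internally filtered colimits; hence $\alpha$ is a transformation between internally filtered colimit preserving $\Y$-functors. By the compact generation of $\Omega_{\Y}$ (\cref{prop:omegacptlygen}), it suffices to verify invertibility of $\alpha$ on internally compact objects, which by \cref{prop:charinternallycpt=locconstcptvalue} are precisely the locally constant objects with compact values. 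For such a $V \in \Y$, choose an \'etale cover $\{V_i \to 1\}$ such that each $\pi_{V_i}^{*} V$ is constant in $\Y_{/V_i}$. By descent, it suffices to show that $\pi_{V_i}^{*} \alpha_V$ is invertible for each $i$; by \'etale base change, this map identifies with the shape transformation of $f_{/V_i}$ evaluated at the constant object $\pi_{V_i}^{*} V$, and the latter is invertible because $(2)$ asserts that $f_{/V_i}$ is a shape equivalence.

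For $(3) \Rightarrow (1)$ under the additional hypotheses, let $F \in \Y$ be a constant object. Since $p$ and $q$ factor through the unique geometric morphism to $\An$, the pullbacks $p^{*} F$ and $q^{*} F$ are constant in $\X$ and $\X'$, hence hypercomplete by hypothesis. \cref{lem:partconvrmk:relsheqimpliesptwisesheq} then yields that the map $q_{*} q^{*} F \to p_{*} p^{*} F$ is invertible. By \cref{hershapeeqiffrelunitinvonconstants} this verifies $(2)$, from which $(1)$ follows by the implication already established.

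The main obstacle is $(2) \Rightarrow (1)$. Condition $(2)$ supplies invertibility of the natural transformation only on constant objects of $\Y$, yet constants generally fail to generate $\Y$ under ordinary colimits, so no direct external argument suffices. What makes the argument go through is the internal compact generation of $\Omega_{\Y}$, activated by the properness of $p$ and $q$ via the Martini-Wolf characterization, which promotes invertibility on constants (at every \'etale level) to invertibility on all internally compact objects and hence on all of $\Omega_{\Y}$. Correctly packaging this interplay between external and internal viewpoints, and in particular the reduction of locally constant objects to constants via \'etale pullback, is the most delicate part.
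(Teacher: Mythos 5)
Your proof follows essentially the same route as the paper's: the same unconditional implications via \cref{hershapeeqiffrelunitinvonconstants} and \cref{rmk:relsheqimpliesptwisesheq}, the same reduction of (2)$\Rightarrow$(1) to internally compact objects using properness and compact generation of $\Omega_{\Y}$ and then to constants via \'etale covers, and the same use of \cref{lem:partconvrmk:relsheqimpliesptwisesheq} for (3)$\Rightarrow$(2). The only slight imprecision is the final step of (2)$\Rightarrow$(1): invertibility of the transformation at the constant object $\pi_{V_i}^{*}V$ of $\Y_{/V_i}$ requires $f_{/W}$ to be a shape equivalence for \emph{all} $W\to V_i$ (the full hereditary hypothesis, as packaged by \cref{hershapeeqiffrelunitinvonconstants} applied over $\Y_{/V_i}$), not merely for $W=V_i$.
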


\begin{proof}
    It follows respectively from \cref{hershapeeqiffrelunitinvonconstants} and \cref{rmk:relsheqimpliesptwisesheq} that (1) implies (2) and (3). 
    
    We now show that (2) implies (1). The morphism (\ref{relativeshapeeq}) induces a transformation of $\Y$-functors 
    \begin{equation}\label{relativeshapeeqinternal}
    	\pf{Q}\pb{Q}\rightarrow\pf{P}\pb{P}.
    \end{equation}
    Since both $p$ and $q$ are proper, by \cite{Martini2023a} the source and target of the transformation above are $\Y$-functors preserving internally filtered colimits. Therefore, it suffices to show that (\ref{relativeshapeeqinternal}) is invertible after restricting to the $\Y$-category of internally compact objects in $\Omega_{\Y}$. By \cref{prop:charinternallycpt=locconstcptvalue}, this condition amounts to proving that, for any $U\in\Y$ and $F\in\Y_{/U}$ locally constant with compact values, the map $$\pf{{q_{/U}}}\pb{{q_{/U}}}F\rightarrow\pf{{p_{/U}}}\pb{{p_{/U}}}F$$
    is invertible. Using \'etale basechange we may further reduce to the case when $F$ is constant. Appealing to \cref{hershapeeqiffrelunitinvonconstants}, one sees that in this last case the map $\pf{q}\pb{q}F\rightarrow\pf{p}\pb{p}F$ is guaranteed to be invertible by condition (2). Therefore we deduce that (1) holds.

    Now assume that constant objects in $\Y$, $\X$ and $\X'$ are hypercomplete. To conclude the proof of the theorem, it suffices to show that (3) implies (2). More specifically, by \cref{hershapeeqiffrelunitinvonconstants}, we need to prove that, for any $F\in\Y$ constant, the map 
    $$\pf{q}\pb{q}F\rightarrow\pf{p}\pb{p}F$$
    is invertible. Since $F$ is constant, both $\pb{p}F$ and $\pb{q}F$ are constant, and therefore hypercomplete by assumption. Hence our proof is concluded by appealing to \cref{lem:partconvrmk:relsheqimpliesptwisesheq}.
\end{proof}

\begin{corollary}
    \label{cor:cell_like_characterization}
    Let $\X\xrightarrow{f}\Y$ be a proper geometric morphism. Consider the following conditions.
    \begin{enumerate}[(1)]
        \item $f$ is cell-like;
        \item $f$ is a hereditary shape equivalence;
        \item for each point $\An\xrightarrow{y}\Y$, $\X_y$ has trivial shape.
    \end{enumerate}
    Then (1) is equivalent to (2), and (1) implies (3). Moreover, if constant objects in $\X$ and $\Y$ are hypercomplete and $\Y\hyp$ has enough points, then (1) and (2) are equivalent to (3). 
\end{corollary}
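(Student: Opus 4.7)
The plan is to derive this corollary as an immediate specialization of Theorem~\ref{thm:proper.relsheqiffhersheq} to the case $\X' = \Y$ and $q = \mathrm{id}_\Y$, noting that the identity $\mathrm{id}_\Y$ is trivially a proper geometric morphism, so the hypotheses of the theorem are satisfied as soon as $f$ is proper.

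The key task is to unwind each of the three conditions under this substitution. For condition~(1): this is handled by Remark~\ref{rmk:cell-likeisrelshapeeq}, which records that when $\X' = \Y$ and $q = \mathrm{id}_\Y$, the natural transformation~(\ref{relativeshapeeq}) reduces to the unit $\mathrm{id}_\Y \to \pf{f}\pb{f}$, whose invertibility is equivalent to $\pb{f}$ being fully faithful, i.e.\ to $f$ being cell-like. For condition~(2): substituting $q = \mathrm{id}_\Y$ in Definition~\ref{def:hershapeq/Y} turns the requirement into the statement that $f_{/U}: \X_{/\pb{f}U} \to \Y_{/U}$ is a shape equivalence for every $U \in \Y$, which is exactly the hereditary shape equivalence condition in the statement of the corollary. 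For condition~(3): in Definition~\ref{def:ptwisesheq}, the fiber of $\mathrm{id}_\Y$ over any point $y:\An\to\Y$ is simply $\An$, so asking that $f_y: \X_y \to \An$ be a shape equivalence amounts to asking that $\X_y$ have trivial shape.

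The hypotheses also match: the assumption that constant objects in $\X$ and $\X'=\Y$ be hypercomplete coincides with the stated hypothesis, and the condition that $\Y\hyp$ have enough points is identical. With these identifications in place, the two implications ``(1)$\Leftrightarrow$(2)'' and ``(1)$\Rightarrow$(3)'', together with the equivalence of all three under the hypercompleteness hypotheses, follow directly from the corresponding assertions of Theorem~\ref{thm:proper.relsheqiffhersheq}. There is no substantive obstacle here beyond bookkeeping; all the real work has already been done in the proof of that theorem, and in particular in the crucial reduction to locally constant objects of compact value via the internal compact generation of $\Omega_\Y$ (Propositions~\ref{prop:charinternallycpt=locconstcptvalue} and~\ref{prop:omegacptlygen}) together with the proper base change characterization of internally filtered colimit preservation used in the proof of ``(2)$\Rightarrow$(1)''.
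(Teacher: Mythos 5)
Your proposal is correct and follows exactly the paper's own route: the paper also obtains this corollary as an immediate specialization of Theorem~\ref{thm:proper.relsheqiffhersheq} to the case $\X'=\Y$, $q=\mathrm{id}_\Y$, using Remark~\ref{rmk:cell-likeisrelshapeeq} to identify condition~(1) with being a relative shape equivalence. Your additional unwinding of conditions~(2) and~(3) and the observation that $\mathrm{id}_\Y$ is proper are correct bookkeeping that the paper leaves implicit.
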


\begin{proof}
    This is an immediate consequence of \cref{thm:proper.relsheqiffhersheq} and \cref{rmk:cell-likeisrelshapeeq}.
\end{proof}

\begin{remark}\label{rmk:generalizelurie}
    We observe that our result is a generalization of Lurie's characterization of cell-like maps between locally compact ANRs in \cite[Proposition 7.3.6.6]{lurie2009higher}. 
    Indeed, by \cref{ex:ANRs}, the notion of being a hereditary shape equivalence for ANRs precisely translates to condition (2) therein, and the condition on fibers having trivial shape translates to condition (3). So we have to justify that if $f \colon \X \rightarrow \Y$ is induced by a proper maps of locally compact ANRs, all three conditions in \cref{cor:cell_like_characterization} are equivalent. By \cref{rmk:Xhypenoughpoints}, $\Y^{\hyp}$ has enough points in this case, and constant objects in $\X$ and $\Y$ are hypercomplete by \cref{Xloccontr=>locconsthyp}. 
\end{remark}

\begin{remark}
    If $\X$ and $\Y$ are locally contractible $\infty$-topoi, the results of \cref{cor:cell_like_characterization} can also be proved by the methods of the next section. See \cref{rmk:cell-likefromapproxfibloccontr} for a more detailed discussion.
\end{remark}

\section{Approximate fibrations}

In this section, we give the definition of an approximate fibration of $\infty$-topoi. The goal is to put it into a shape-theoretic context, so we start by investigating a more intuitive notion, namely that of a shape fibration. This is essential to compare our definition to the classical notion of approximate fibration by Coram and Duvall \cite{CoramDuvall}, which rounds off the section.

\subsection{Shape fibrations and quasi-fibrations}

\label{subsec:shape_fibrations}

The shape of an $\infty$-topos is one of its fundamental invariants, but as a tool for studying maps it is often too coarse. Given a continuous map $f \colon X \to Y$, the induced morphism of shapes $\sh{f} \colon \sh{X} \to \sh{Y}$ typically retains only very limited information about the behaviour of $f$.  

This limitation is apparent in a simple example. Let $C$ be the Cantor set, and consider the quotient $X = C \times [0,1] \big/ \!\sim$, where all points $(x,1)$ are identified with each other. Then $\sh{X} \simeq * \in \Pro(\An)$. For any $x \in C$, the map $* \xrightarrow{(x,0)} X$
induces the identity morphism on shapes. From the perspective of shape, this map is indistinguishable from the trivial one, even though the underlying inclusion visibly carries complicated local structure.  

To extract finer information from a map $f$, one should not only consider $\sh{f}$ itself, but also the induced maps $\sh{f^{-1}(U)} \longrightarrow \sh{U}$ for open subsets $U \subset Y$. This leads to the notion of a \emph{shape fibration}: a map $f$ is called a shape fibration if, for every open $U \subset Y$, the map $\sh{f^{-1}(U)} \to \sh{U}$ arises as a base change of $\sh{f}$ along $\sh{U} \to \sh{Y}$.  

In what follows, we introduce the concept of shape fibration for geometric morphisms of arbitrary $\infty$-topoi, study its behaviour under local contractibility assumptions, and compare it with the weaker notion of \emph{shape quasi-fibration}, which depends only on the fibres over points. We show that every proper shape fibration is a shape quasi-fibration, and we identify conditions under which the converse holds.

\begin{definition}\label{def:shapefib}
    Let $\X\xrightarrow{f}\Y$ be a geometric morphism. We say that $f$ is a \textit{shape fibration} if, for any $U\in\Y$, the square
    $$
    \begin{tikzcd}
        \sh{\X_{/\pb{f}U}}\arrow[r]\arrow[d] & \sh{\X}\arrow[d, "{\sh{f}}"] \\
        \sh{\Y_{/U}}\arrow[r] & \sh{\Y}
    \end{tikzcd}
    $$
    is a pullback in $\Pro(\An)$.
\end{definition}

\begin{remark}\label{rmk:shape(quasi)fibstableunderetalebasechange}
    Let $f:\X\rightarrow\Y$ be a shape fibration, let $U\in\Y$ be any object, and let $f_{/U}:\X_{/\pb{f}U}\rightarrow\Y_{/U}$ be the morphism obtained by pulling back $f$ along the \'etale morphism $\Y_{/U}\rightarrow\Y$. Then $f_{/U}$ is a shape fibration. Indeed, since $f$ is a shape fibration, the composite square and the right square in the diagram
    $$
    \begin{tikzcd}
\sh{\X_{/\pb{f}V}} \arrow[r] \arrow[d] & \sh{\X_{/\pb{f}U}} \arrow[d, "\sh{f_{/U}}"] \arrow[r] & \sh{\X} \arrow[d, "\sh{f}"] \\
\sh{\Y_{/V}} \arrow[r]                 & \sh{\Y_{/U}} \arrow[r]                                & \sh{\Y}                   
\end{tikzcd}
    $$
    are cartesian. Therefore the left square is also cartesian. Our claim is proven by observing that, for any map $V\rightarrow U$, there is an equivalence of $\infty$-categories $(\Y_{/U})_{/V}\simeq \Y_{/V}$.  
\end{remark}

\begin{lemma}\label{lem:shapefibsufficescheckongenerators} 
Let $\X$ and $\Y$ be locally contractible $\infty$-topoi, and let $f:\X\rightarrow\Y$ be any geometric morphism. Suppose that $\mathcal{B}\subseteq\Y$ is a family of objects generating $\Y$ under colimits. Then $f$ is a shape fibration if and only if, for any $U\in\mathcal{B}$ the square 
	$$
	\begin{tikzcd}
		\sh{\X_{/\pb{f}U}}\arrow[r]\arrow[d] & \sh{\X}\arrow[d, "{\sh{f}}"] \\
		\sh{\Y_{/U}}\arrow[r] & \sh{\Y}
	\end{tikzcd}
	$$
	is a pullback in $\Pro(\An)$.
\end{lemma}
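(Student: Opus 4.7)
The \enquote{only if} direction is trivial by taking $\mathcal{B}=\Y$, so the plan focuses on the converse.

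First I would set up a colimit closure argument: let $\mathcal{S}\subseteq\Y$ be the full subcategory of those $U\in\Y$ for which the square in the statement is cartesian in $\Pro(\An)$. The hypothesis gives $\mathcal{B}\subseteq\mathcal{S}$, so, since every object of $\Y$ is a colimit of objects of $\mathcal{B}$, it suffices to prove that $\mathcal{S}$ is closed under colimits.

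Next I would express both shapes in the square concretely. Denote by $a\colon\Y\rightarrow\An$ and $b\colon\X\rightarrow\An$ the terminal geometric morphisms; both are locally contractible by hypothesis, so both come with left adjoints $a_\sharp$ and $b_\sharp$. Using that the shape of $\Y_{/U}$ is $(a\circ\pi_U)_\sharp(1)$ and that $(\pi_U)_\sharp(1_{\Y_{/U}})\simeq U$, one identifies $\sh{\Y_{/U}}\simeq a_\sharp U$ and, analogously, $\sh{\X_{/\pb{f}U}}\simeq b_\sharp \pb{f}U$. Since $a_\sharp$, $b_\sharp$, and $\pb{f}$ are left adjoints, they are all cocontinuous, so for any $U\simeq\colim_i U_i$ one obtains
\[
\sh{\Y_{/U}}\simeq\colim_i\sh{\Y_{/U_i}}\qquad\text{and}\qquad\sh{\X_{/\pb{f}U}}\simeq\colim_i\sh{\X_{/\pb{f}U_i}},
\]
where these colimits are taken in $\An$ (and in particular land there).

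Finally I would invoke that $\An$ is an $\infty$-topos, hence has universal colimits: pullback along $\sh{\X}\rightarrow\sh{\Y}$ commutes with the colimit over $i$, giving
\[
\sh{\X_{/\pb{f}U}}\simeq\colim_i\sh{\X_{/\pb{f}U_i}}\simeq\colim_i\bigl(\sh{\Y_{/U_i}}\times_{\sh{\Y}}\sh{\X}\bigr)\simeq\sh{\Y_{/U}}\times_{\sh{\Y}}\sh{\X},
\]
where the middle equivalence uses $U_i\in\mathcal{S}$ pointwise. Since the embedding $\An\hookrightarrow\Pro(\An)$ preserves finite limits, this equivalence still describes a pullback square in $\Pro(\An)$, so $U\in\mathcal{S}$, completing the closure argument.

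The main obstacle is not conceptual but bookkeeping: the square is formally a diagram in $\Pro(\An)$, yet the decisive step (commuting pullback with colimits) is only available in $\An$. Local contractibility of $\X$, $\Y$, and all their slices is exactly what keeps every shape in the square inside the essential image of $\An\hookrightarrow\Pro(\An)$, which makes the transfer between the two categories legitimate.
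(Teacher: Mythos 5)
Your proposal is correct and follows essentially the same route as the paper: write $U$ as a colimit of objects of $\mathcal{B}$, use local contractibility to see that all shapes in sight lie in $\An\subseteq\Pro(\An)$ (where you make this explicit via $\sh{\Y_{/U}}\simeq a_\sharp U$ and cocontinuity of $a_\sharp$, $b_\sharp$, $\pb{f}$), and then commute the pullback past the colimit using universality of colimits in $\An$. The only cosmetic difference is your framing via a colimit-closed subcategory $\mathcal{S}$, which the paper handles by directly decomposing an arbitrary $V$.
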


\begin{proof}
    Let $V\in\Y$ be any object in $\Y$. We need to show that the map $\sh{\X_{/\pb{f}V}}\rightarrow\sh{\Y_{/V}}\times_{\sh{\Y}}\sh{\X}$ in $\Pro(\An)$ is invertible.
    Notice that, since $\X$ and $\Y$ are assumed to be locally contractible, and \'etale morphisms are locally contractible, the map above lies in the full subcategory $\An\hookrightarrow\Pro(\An)$. By assumption, we can write $V\simeq\varinjlim\limits_{i\in I}U_i$, where $I\rightarrow\Y$ is a functor with $U_i\in\mathcal{B}$ for all $i\in I$. Therefore, we obtain an isomorphism $\sh{Y_{/V}}\simeq\varinjlim\limits_{i\in I}\sh{Y_{/U_i}}$ in $\An$. Using respectively universality of colimits in $\An$, our hypothesis for $f$ and the fact that $\pb{f}$ preserves colimits, we deduce
    \begin{align*}
        \sh{\Y_{/V}}\times_{\sh{\Y}}\sh{\X}&\simeq (\varinjlim\limits_{i\in I}\sh{Y_{/U_i}})\times_{\sh{\Y}}\sh{\X} \\
        &\simeq \varinjlim\limits_{i\in I}(\sh{Y_{/U_i}}\times_{\sh{\Y}}\sh{\X}) \\
        &\simeq \varinjlim\limits_{i\in I}\sh{\X_{/\pb{f}U_i}} \\
        &\simeq \sh{\X_{/\pb{f}V}}.\qedhere
    \end{align*}
\end{proof}

\begin{corollary}\label{cor:shapefibontopspace}
    Let $\X$ be any $\infty$-topos, let $Y$ be any topological space, and assume that $\X$ and $\Y=\Shsp{Y}$ are locally contractible. Let $f:\X\rightarrow\Y$ be any geometric morphism. Then $f$ is a shape fibration if and only if, for any open subset $U\hookrightarrow Y$, the square 
    $$
    \begin{tikzcd}
        \sh{\X_{/\pb{f}U}}\arrow[r]\arrow[d] & \sh{\X} \arrow[d, "\sh{f}"] \\
        \sh{U} \arrow[r] & \sh{Y}
    \end{tikzcd}
    $$
    is a pullback in $\An$.
\end{corollary}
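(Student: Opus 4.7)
The plan is to reduce this statement to \cref{lem:shapefibsufficescheckongenerators} by taking as generating family $\mathcal{B}$ the collection of sheaves represented by the open subsets of $Y$. It is a standard fact that for a topological space $Y$, every object of $\Shsp{Y}$ can be obtained as a colimit of representables of opens, so $\mathcal{B}$ does generate $\Y$ under colimits. In view of this, once we know that the square in the corollary coincides (up to natural equivalence) with the square appearing in \cref{lem:shapefibsufficescheckongenerators}, the equivalence will follow immediately.

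The first step is to identify $\sh{\Y_{/U}}$ with $\sh{U}$ when $U$ is the sheaf represented by an open subset of $Y$; this is a consequence of the standard equivalence $\Shsp{Y}_{/U} \simeq \Shsp{U}$ for such $U$, together with the evident compatibility of the shape functor under this equivalence. The second step is to observe that the square in question really does live in $\An$: since $\X$ and $\Y$ are locally contractible, the same holds for each slice $\X_{/\pb{f}U}$ and $\Y_{/U}$ (\'etale morphisms being locally contractible), and locally contractible $\infty$-topoi have constant shape, so all four corners lie in the essential image of $\An \hookrightarrow \Pro(\An)$. Since this inclusion preserves finite limits, the square is cartesian in $\Pro(\An)$ if and only if it is cartesian in $\An$.

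Putting these pieces together, for each open $U \hookrightarrow Y$ the cartesianness in $\An$ of the square appearing in the statement of the corollary is equivalent to the cartesianness in $\Pro(\An)$ of the square in \cref{lem:shapefibsufficescheckongenerators} associated to the representable sheaf $U \in \mathcal{B}$. Applying \cref{lem:shapefibsufficescheckongenerators} with this $\mathcal{B}$ then yields the equivalence: $f$ is a shape fibration if and only if the displayed square is a pullback in $\An$ for each open $U \hookrightarrow Y$.

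I expect the only subtlety to be in the first step, namely being clean about why $\sh{\Y_{/U}} \simeq \sh{U}$: one must check that the shape of the \'etale geometric morphism $\pi_U \colon \Y_{/U} \rightarrow \Y$ agrees with the shape of $U$ viewed as an object of $\Y$ (via \cref{example:corepresentableasintshape} and the identification $\Shsp{Y}_{/U}\simeq \Shsp{U}$). Everything else is a direct unpacking of definitions and an application of the already-established lemma.
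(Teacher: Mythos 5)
Your proposal is correct and follows the same route as the paper: apply \cref{lem:shapefibsufficescheckongenerators} with $\mathcal{B}$ the sheaves represented by open subsets of $Y$, which generate $\Shsp{Y}$ under colimits. The additional details you supply (the identification $\sh{\Y_{/U}}\simeq\sh{U}$ via $\Shsp{Y}_{/U}\simeq\Shsp{U}$, and the fact that the square lands in $\An\subset\Pro(\An)$) are accurate and merely make explicit what the paper leaves implicit.
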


\begin{proof}
    This follows immediately from \cref{lem:shapefibsufficescheckongenerators} and the fact that $\Shsp{Y}$ is generated by the sheaves represented by the open subsets of $Y$.
\end{proof}

\begin{theorem}\label{thm:shapefibsandpreslocconst}
    Let $\X\xrightarrow{f}\Y$ be any geometric morphism and assume that $\X$ and $\Y$ are locally contractible $\infty$-topoi. Then the following are equivalent.
    \begin{enumerate}[(1)]
        \item $f$ is a shape fibration;
        \item the functor $\pf{f}\pb{f}$ preserves locally constant objects.
    \end{enumerate}
\end{theorem}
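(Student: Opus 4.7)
The plan is to identify both conditions with the invertibility of the Beck--Chevalley transformation attached to the naturality square of \cref{rmk:etanatural}. Since $\X$ and $\Y$ are locally contractible, denote by $\eta_\X\colon \X \to \An_{/\sh{\X}}$ and $\eta_\Y\colon \Y \to \An_{/\sh{\Y}}$ the geometric morphisms from that remark; they fit in the commuting square
\[\begin{tikzcd}
\X \ar[r, "\eta_\X"] \ar[d, "f"] & \An_{/\sh{\X}} \ar[d, "\sh{f}"] \\
\Y \ar[r, "\eta_\Y"] & \An_{/\sh{\Y}}
\end{tikzcd}\]
yielding the equivalence $\pb{f}\pb{\eta_\Y}\simeq\pb{\eta_\X}\pb{\sh{f}}$. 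Taking mates twice produces natural transformations
\[
\alpha\colon \pb{\eta_\Y}\pf{\sh{f}}\to \pf{f}\pb{\eta_\X} \quad\text{and}\quad \beta\colon (\eta_\X)_\sharp\pb{f}\to \pb{\sh{f}}(\eta_\Y)_\sharp
\]
corresponding to one another via adjunction. Using local contractibility of $\X$ together with the local contractibility of the \'etale morphism $\pi_{\pb{f}U}$, I would identify $(\eta_\X)_\sharp\pb{f}U \simeq \sh{\X_{/\pb{f}U}}$ and $\pb{\sh{f}}(\eta_\Y)_\sharp U \simeq \sh{\Y_{/U}}\times_{\sh{\Y}}\sh{\X}$; under these identifications $\beta_U$ coincides with the comparison map of \cref{def:shapefib}, so (1) is equivalent to $\beta$ (equivalently $\alpha$) being a natural equivalence.

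The implication $(1)\Rightarrow(2)$ is then immediate: if $\alpha$ is an equivalence and $V\simeq \pb{\eta_\Y}W$ is locally constant, then
\[
\pf{f}\pb{f}V\simeq \pf{f}\pb{\eta_\X}\pb{\sh{f}}W\overset{\alpha}{\simeq} \pb{\eta_\Y}\pf{\sh{f}}\pb{\sh{f}}W,
\]
which lies in the essential image of $\pb{\eta_\Y}$ and is therefore locally constant.

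For $(2)\Rightarrow(1)$ the key observation is that $\beta_V$ is automatically an equivalence whenever $V$ is locally constant: for $V \simeq \pb{\eta_\Y}W$, both $(\eta_\X)_\sharp\pb{f}\pb{\eta_\Y}W$ and $\pb{\sh{f}}(\eta_\Y)_\sharp\pb{\eta_\Y}W$ reduce to $\pb{\sh{f}}W$. The task is to extend this invertibility to arbitrary $V\in\Y$. My plan is as follows. First, I propagate (2) across \'etale base change: for every $U\in\Y$ the functor $\pf{{f_{/U}}}\pb{{f_{/U}}}$ preserves locally constant objects of $\Y_{/U}$, using that such an object is locally of the form $\pb{a_i}A$ on an \'etale cover $\{W_i\to U\}$ and invoking \'etale base change for $\pf{f}$ to reduce to the global case (2). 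This parametric version upgrades $\beta$ to a natural transformation of cocontinuous $\Y$-functors out of $\Omega_\Y$. By the compact generation $\Omega_\Y\simeq\underline{\text{Ind}}_\Y(\Omega_\Y^{cpt})$ of \cref{prop:omegacptlygen} together with the characterisation of internally compact objects as locally constants with compact values (\cref{prop:charinternallycpt=locconstcptvalue}), invertibility reduces to the internally compact case, where it follows from the observation above. The main obstacle will be making the internal enhancement precise within the framework of \cite{martini2025presentabilitytopoiinternalhigher}, since $\pf{f}$ is not assumed cocontinuous and the target $\An_{/\sh{\X}}$ must be suitably viewed as a $\Y$-category so that $\beta$ can be interpreted as a $\Y$-functor transformation.
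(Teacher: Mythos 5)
Your reduction of both conditions to the mates of the square $\pb{f}\pb{\eta_{\Y}}\simeq\pb{\eta_{\X}}\pb{\sh{f}}$ is the right starting point, and your direction $(1)\Rightarrow(2)$ is correct and essentially the paper's argument: once $(1)$ is identified with invertibility of $\beta$, hence of its total mate $\alpha$, the computation $\pf{f}\pb{f}\pb{\eta_{\Y}}W\simeq\pb{\eta_{\Y}}\pf{\sh{f}}\pb{\sh{f}}W$ gives $(2)$.

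The direction $(2)\Rightarrow(1)$ has a genuine gap. Your ``key observation'' --- that $\beta_V$ is an equivalence for every locally constant $V$ --- is true \emph{unconditionally}: it uses only full faithfulness of $\pb{\eta_{\Y}}$ and $\pb{\eta_{\X}}$ and never invokes $(2)$. So if your extension step from locally constant to arbitrary objects went through, you would have shown that \emph{every} geometric morphism of locally contractible topoi is a shape fibration, which is false: for $X=\{0\}\sqcup[0,1]\to[0,1]=Y$ and $U=(1/2,1]$ one has $\sh{f^{-1}(U)}\simeq \ast$ while $\sh{U}\times_{\sh{Y}}\sh{X}\simeq \ast\sqcup\ast$ (and correspondingly $\pf{f}\pb{f}$ produces skyscrapers, so $(2)$ fails as well). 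The extension step is therefore exactly where $(2)$ must enter, and it is also where your argument is not justified: locally constant objects do not generate $\Y$ under \emph{external} colimits, and to invoke the internal compact generation of $\Omega_{\Y}$ you would need $\beta$ to arise as global sections of a transformation of cocontinuous $\Y$-functors into a cocomplete $\Y$-category. The natural candidate target $U\mapsto\An_{/\sh{\X_{/\pb{f}U}}}$ is a sheaf of categories, but verifying the base-change conditions its restriction left adjoints must satisfy for $\Y$-cocompleteness amounts to asking that squares of the form $\sh{\X_{/\pb{f}(V\times_U W)}}\simeq\sh{\X_{/\pb{f}V}}\times_{\sh{\X_{/\pb{f}U}}}\sh{\X_{/\pb{f}W}}$ be cartesian --- essentially the shape-fibration condition you are trying to prove. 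The paper avoids all of this with a direct Yoneda argument: hypothesis $(2)$, applied to the constant objects $\pb{\eta}(Z\times\sh{\Y})$ for $Z\in\An$, says that the counit map $\pb{\eta}\pf{\sh{f}}\pb{\sh{f}}\to\pf{f}\pb{f}\pb{\eta}$ is invertible on them; taking sections over each $U\in\Y$ and using \'etale base change identifies the resulting map with $c^*\colon\Map(P_U,Z)\to\Map(\sh{\X_{/\pb{f}U}},Z)$, where $c$ is the comparison map of \cref{def:shapefib}, and letting $Z$ range over $\An$ forces $c$ to be an equivalence. No internal category theory, properness, or compact generation is needed; I would redo $(2)\Rightarrow(1)$ along these lines.
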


\begin{proof}
    Since $\X$ and $\Y$ are locally contractible, as explained in \cref{sec:local_contractibility} we may consider the commuting diagram
    \begin{equation*}
        \begin{tikzcd}
            \X \ar[r, "\widetilde{\eta}"] \ar[d, "f"] & \An_{/\sh{\X}} \ar[d, "\sh{f}"]\\
            \Y \ar[r, "\eta"] & \An_{/\sh{Y}}
        \end{tikzcd}
    \end{equation*}
    The counit of the adjunction $\pb{\eta}\dashv \pf{\eta}$ produces a natural transformation
    \begin{equation}\label{comppreslocconst}
        \pb{\eta} \pf{\eta}\pf{f}\pb{f}\pb{\eta}\rightarrow\pf{f}\pb{f}\pb{\eta}.
    \end{equation}
    Clearly, (2) holds if and only (\ref{comppreslocconst}) is invertible. Notice that, by \cref{rmk:etanatural}, the domain of the transformation (\ref{comppreslocconst}) is isomorphic to $\pb{\eta}\pf{\sh{f}}\pb{\sh{f}}$.
    The morphism (\ref{comppreslocconst}) is invertible if and only if it is invertible after taking sections at each $U\in\Y$. Then, the invertibility of (\ref{comppreslocconst}) is seen to be equivalent to the Beck-Chevalley map of the following diagram
    \begin{equation*}
        \begin{tikzcd}
            \An_{/\sh{\X_{/\pb{f}U}}} \ar[d, "\pf{\sh{f_{/U}}}"] & \An_{/\sh{\X}} \ar[d, "\pf{\sh{f}}"] \ar[l, "\pb{\sh{\pi_U}}"'] & \An \ar[l, "\pb{\pi_{\sh{X}}}"'] \\
             \An_{/\sh{\Y_{/U}}}  & \An_{/\sh{\Y}} \ar[l, "\pb{\sh{\pi_U}}"'] &
        \end{tikzcd}
    \end{equation*}  
    being invertible after applying sections $\pf{(\pi_{\sh{\Y_{/U}}})} \colon \An_{\sh{\Y_{/U}}} \rightarrow \An$ for every $U \in \Y$.
    Denote by $P_U$ the pullback $\sh{\Y_{/U}}\times_{\sh{\Y}}\sh{\X}$, and let $c \colon \sh{\X_{/\pb{f}U}} \rightarrow P_U$ be the comparison map. By \'etale base change, the map
    \[  \pf{(\pi_{\sh{\Y_{/U}}})} \pb{\sh{\pi_U}} \pf{\sh{f}} \pb{\pi_{\sh{X}}} \rightarrow \pf{(\pi_{\sh{\Y_{/U}}})}  \pf{\sh{f_{/U}}} \pb{\sh{\pi_U}} \pb{\pi_{\sh{X}}} \]
    identifies with the map $c^* \colon \Map(P_U,Z) \rightarrow \Map(\sh{\X_{/U}},Z)$. This map is an equivalence for all $Z$ if and only if $c$ is an equivalence. 
\end{proof}

\begin{definition}\label{def:shapequasifib}
    Let $\X\xrightarrow{f}\Y$ be a geometric morphism. We say that $f$ is a \textit{shape quasi-fibration} if, for any point $\An\xrightarrow{y}\Y$, the square
    $$
    \begin{tikzcd}
        \sh{\X_y}\arrow[r]\arrow[d] & \sh{\X}\arrow[d, "{\sh{f}}"] \\
        \sh{\An}\arrow[r, "y"] & \sh{\Y}
    \end{tikzcd}
    $$
    is a pullback in $\Pro(\An)$.
\end{definition}

\begin{lemma}\label{lem:quasi-fibbtweenconstshap=>constshapefib}
    Let $\X\xrightarrow{f}\Y$ be any shape quasi-fibration, and assume that both $\sh{\X}$ and $\sh{\Y}$ are constant pro-objects. Then, for any point $\An\xrightarrow{y}\Y$, $\sh{\X_y}$ is a constant pro-object. Moreover, if one additionally assumes that $f$ is proper, then $\sh{\X_y}$ is a compact object in $\An$.
\end{lemma}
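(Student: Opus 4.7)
The plan is to extract the first assertion essentially for free from the shape quasi-fibration assumption. By \cref{def:shapequasifib}, we have $\sh{\X_y} \simeq \sh{\An} \times_{\sh{\Y}} \sh{\X}$ in $\Pro(\An)$, and of course $\sh{\An} \simeq *$. Since the inclusion $\An \hookrightarrow \Pro(\An)$ recalled in \cref{sec:recall_topoi} is fully faithful and preserves finite limits, and since by hypothesis both $\sh{\X}$ and $\sh{\Y}$ lie in its essential image, the pullback is computed as a pullback in $\An$. This exhibits $\sh{\X_y}$ as the constant pro-object associated to $\mathrm{fib}_y(\sh{\X} \to \sh{\Y}) \in \An$, yielding the first assertion.

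For the second assertion, the strategy is to promote this constant description of $\sh{\X_y}$ to compactness using properness of $f$. First, I would invoke stability of properness under base change to conclude that the structure map $f_y \colon \X_y \to \An$ is itself proper. Then, using the Martini--Wolf characterization of proper geometric morphisms (already invoked in the proof of \cref{thm:proper.relsheqiffhersheq} via \cite{Martini2023a}), the composite $(f_y)_*(f_y)^* \colon \An \to \An$ preserves internally filtered colimits over $\An$, which here are simply filtered colimits of anima.

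To finish, I would translate across the equivalence $\Pro(\An) \simeq \Fun^{\mathrm{lex},\mathrm{acc}}(\An,\An)\op$: the shape $\sh{\X_y}$ viewed as a lex accessible functor $\An \to \An$ is precisely $A \mapsto (f_y)_*(f_y)^* A$, while the first part shows this functor is corepresentable, of the form $\Map_{\An}(Z,-)$ for some $Z \in \An$. Combining the two descriptions, $\Map_{\An}(Z,-)$ preserves filtered colimits, which is by definition the compactness of $Z$ in $\An$.

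The only potentially delicate point is correctly invoking both stability of properness under base change and the Martini--Wolf internally-filtered-colimit preservation statement in the internal framework; once these are in hand, the rest of the argument is a clean matching of the two descriptions of $\sh{\X_y}$ and requires no further input.
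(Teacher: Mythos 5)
Your proposal is correct and follows essentially the same route as the paper: the first claim via finite-limit-preservation of the fully faithful inclusion $\An \subset \Pro(\An)$, and the second via base change of properness to $\X_y \to \An$, the resulting filtered-colimit-preservation of $(f_y)_*(f_y)^*$, and the identification of this functor with $\Map(\sh{\X_y},-)$ (the paper outsources this last step to a citation, but the underlying argument is the one you spell out).
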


\begin{proof}
    Since the inclusion $\An \subset \Pro(\An)$ induced by the Yoneda embedding preserves finite limits, we say that if $\sh{\X}$ and $\sh{Y}$ are constant pro-objects, so is $\sh{\X_y}$.

    If one also assumes that $f$ is proper, we get that $\X_y$ is a proper $\infty$-topos with constant shape. Then arguing exactly as in \cite[Proposition A.6]{volpe2025verdier}, one sees that $\sh{\X_y}$ must be compact, as it corepresents a filtered colimit preserving functor.
\end{proof}

\begin{lemma}\label{lemma:approxfib=>shapequasifib}
    Let $\X\xrightarrow{f}\Y$ be a proper geometric morphism, and suppose that $f$ is a shape fibration. Then $f$ is a shape quasi-fibration.
\end{lemma}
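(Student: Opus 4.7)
The plan is to combine proper base change at the point $y$ with the shape fibration condition applied to étale neighborhoods of $y$, using that finite limits commute with cofiltered limits in $\Pro(\An)$.

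First, I would use proper base change. Forming the pullback square in $\Top$ with maps $f_y \colon \X_y \to \An$ and $y' \colon \X_y \to \X$, properness of $f$ yields $y^* f_* \simeq (f_y)_* (y')^*$, and consequently
\[ (f_y)_* (f_y)^* A \simeq y^* (f_* f^* a^* A) \]
for every $A \in \An$, where $a \colon \Y \to \An$ denotes the unique geometric morphism. This identifies the shape of the fiber $\X_y$ (as a functor $\An\to\An$) with the stalk at $y$ of the object $f_* f^* a^* A \in \Y$.

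Next, I would approximate $y$ by étale neighborhoods. The pullback functor $y^*$ can be written as a filtered colimit of the functors $\pi_U^*$, indexed by the category $I_y$ of pairs $(U, s)$ with $U \in \Y$ and $s \colon 1 \to y^* U$. Dually, $\An$ is presented as a cofiltered limit of étale slices $\Y_{/U}$ in $\Top_{/\Y}$, and by properness of $f$ combined with stability of pullbacks under limits we obtain $\X_y \simeq \lim_{(U, s)} \X_{/f^* U}$ in $\Top$. Since $\sh \colon \Top \to \Pro(\An)$ commutes with cofiltered limits of proper $\infty$-topoi, this lifts to a presentation $\sh{\X_y} \simeq \lim_{(U, s)} \sh{\X_{/f^* U}}$ in $\Pro(\An)$. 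Applying the shape fibration hypothesis level-wise and commuting finite limits with cofiltered ones in $\Pro(\An)$ yields
\[ \sh{\X_y} \simeq \lim_{(U, s)} \bigl( \sh{\Y_{/U}} \times_{\sh{\Y}} \sh{\X} \bigr) \simeq \Bigl( \lim_{(U, s)} \sh{\Y_{/U}} \Bigr) \times_{\sh{\Y}} \sh{\X}. \]
The same cofiltered-limit formula applied to the identity of $\Y$ (trivially a proper shape fibration) identifies the remaining limit $\lim_{(U,s)} \sh{\Y_{/U}}$ with $\sh{\An} = *$, completing the argument.

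The main obstacle is carefully justifying the cofiltered limit presentations $\An \simeq \lim \Y_{/U}$ and $\X_y \simeq \lim \X_{/f^*U}$ in $\Top$ and the commutation of the shape functor with such limits. A cleaner route avoiding this external manipulation would work entirely inside $\Omega_\Y$: by properness $F_* F^*$ preserves internally filtered colimits by \cite{Martini2023a}, and by \cref{prop:omegacptlygen} together with \cref{prop:charinternallycpt=locconstcptvalue} it is determined by its restriction to locally constant objects with compact values, on which the shape fibration condition gives direct control in the spirit of \cref{thm:shapefibsandpreslocconst}; one could then translate the pointwise claim into an identification between the pointwise stalk $y^* \circ F_* F^* \circ a^*$ and the functor computing $\sh{\An} \times_{\sh{\Y}} \sh{\X}$, matching the formula produced by proper base change.
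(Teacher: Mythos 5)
Your overall strategy is the right one and is essentially the paper's: use proper base change at $y$, write the stalk as a filtered colimit over neighborhoods of $y$, and conclude that the quasi-fibration square is a cofiltered limit of pullback squares. However, the central step as you justify it has a genuine gap. You assert that $\An \simeq \lim_{(U,s)} \Y_{/U}$ and $\X_y \simeq \lim_{(U,s)} \X_{/f^*U}$ in $\Top$, and that $\sh$ commutes with cofiltered limits of proper topoi. Neither claim is justified, and the first is false in general: the cofiltered limit of the \'etale neighborhoods of a point is the ``localization of $\Y$ at $y$'', which receives a geometric morphism from $\An$ but need not be $\An$ (already for sheaves on $\mathbb{R}$ at the origin the limit is sheaves on the germ). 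Moreover $\sh$ preserves colimits of topoi, not limits, so commuting it past a cofiltered limit is precisely the nontrivial content you would need to prove; asserting it for proper topoi begs the question. The paper never forms a limit of topoi: it proves $\sh{\X_y}\simeq\varprojlim_{(U,x_U)\in N_C(y)\op}\sh{\X_{/\pb{f}L(y(U))}}$ (\cref{cor:shapeoffibpropermorph}) directly at the level of corepresented functors, by combining proper base change ($\pf{a}\pb{a}\simeq\pb{y}\pf{f}\pb{b}$) with the stalk formula $\pb{y}G\simeq\varinjlim_{(U,x_U)\in N_C(y)}\Hom{\Y}{L(U)}{G}$ of \cref{prop:formulastalks}. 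Your own first paragraph already contains the proper base change input, and your ``$y^*$ as a filtered colimit of corepresentables'' observation is the stalk formula; combining these two directly, together with \'etale base change to identify $\Map_{\Y}(U,\pf{f}\pb{f}\pb{a}A)$ with $\pf{(a\pi_U)}\pf{{f_{/U}}}\pb{{f_{/U}}}$ applied to a constant, gives the needed cofiltered limit presentation without any detour through limits of topoi.

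Two smaller points. First, your indexing category $I_y$ of pairs $(U,s)$ with $U$ ranging over all of $\Y$ is not small; the paper fixes a small presentation $\PSh{C}\to\Y$ and uses the category $N_C(y)$ of $C$-neighbourhoods, which is small and filtered by \cref{rmk:N(x)isfilt} (filteredness is what makes the limit of pullback squares argument, and the identification of the limit of the $\sh{\Y_{/U}}$ with $\sh{\An}=\ast$, go through). Second, once the presentation of the square as a cofiltered limit of the shape-fibration squares is in place, your concluding step (a limit of pullback squares is a pullback) is exactly the paper's; no hypercompleteness or local contractibility is needed, so the internal ``cleaner route'' you sketch at the end is unnecessary for this lemma.
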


\begin{proof}
    Let $\An\xrightarrow{y}\Y$ be any point in $\Y$ and let $\PSh{C}\xrightarrow{L}\Y$ be any left exact localization where $C$ is small. Denote by $N_{C}(y)$ the $\infty$-category of $C$-neighbourhoods of $y$ (see \cref{def:C-neighbourhoods}). Combining \cref{cor:shapeoffibpropermorph} and \cref{rmk:N(x)isfilt}, one sees that the square appearing in \cref{def:shapequasifib} is a limit of squares as follows
    $$
    \begin{tikzcd}
        \varprojlim\limits_{U\in N_{C}(y)\op}\sh{\X_{/\pb{f}U}}\arrow[r]\arrow[d] & \varprojlim\limits_{U\in N_{C}(y)\op}\sh{\X}\arrow[d, "{\sh{f}}"] \\
        \varprojlim\limits_{U\in N_{C}(y)\op}\sh{\Y_{/U}}\arrow[r] & \varprojlim\limits_{U\in N_{C}(y)\op}\sh{\Y}.
    \end{tikzcd}
    $$
    Since $f$ is a shape fibration, for each $U\in N_C(y)$, the corresponding square is a pullback in $\Pro(\An)$. Therefore the proof is concluded by observing that a limit of pullback squares is again a pullback. 
\end{proof}

\begin{theorem}\label{thm:quasifibsandpreslocconst}
    Let $\X\xrightarrow{f}\Y$ be any proper geometric morphism, and assume that $\X$ and $\Y$ are locally contractible $\infty$-topoi. Suppose additionally that $\Y\hyp$ has enough points. Then the following are equivalent.
    \begin{enumerate}[(1)]
        \item $f$ is a shape fibration;
        \item $f$ is a shape quasi-fibration;
        \item the functor $\pf{f}\pb{f}$ preserves locally constant objects.
    \end{enumerate}
\end{theorem}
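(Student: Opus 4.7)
The plan is to leverage the earlier results in the excerpt: the equivalence (1) $\Leftrightarrow$ (3) was established in Theorem \ref{thm:shapefibsandpreslocconst} using only local contractibility, while (1) $\Rightarrow$ (2) is the content of Lemma \ref{lemma:approxfib=>shapequasifib} using only properness. The remaining content of the statement, for which the enough points hypothesis is needed, is the implication (2) $\Rightarrow$ (3).

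To prove this, I would revisit the natural transformation
$$\alpha \colon \pb{\eta}\pf{\sh{f}}\pb{\sh{f}} \longrightarrow \pf{f}\pb{f}\pb{\eta}$$
constructed in the proof of Theorem \ref{thm:shapefibsandpreslocconst}, whose invertibility on every $G \in \An_{/\sh{\Y}}$ is equivalent to (3). For any such $G$, the source $\pb{\eta}\pf{\sh{f}}\pb{\sh{f}}G$ is locally constant in $\Y$, hence hypercomplete by Lemma \ref{Xloccontr=>locconsthyp}. Similarly, $\pb{f}\pb{\eta}G$ is locally constant in $\X$, hence hypercomplete, and since $\pf{f}$ preserves hypercompleteness, so is $\pf{f}\pb{f}\pb{\eta}G$. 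As $\Y\hyp$ has enough points, it therefore suffices to check that $\pb{y}\alpha_G$ is an equivalence for every point $y\colon \An \to \Y$.

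For the target, proper base change applied to the fiber square of $f$ over $y$ gives $\pb{y}\pf{f}\pb{f}\pb{\eta}G \simeq \pf{f_y}\pb{f_y}(G_y)$, where $G_y \coloneqq \pb{y}\pb{\eta}G \in \An$ is the stalk of $G$ at the induced point of $\sh{\Y}$. Since $\X_y$ is proper of shape $\sh{\X_y}$, this rewrites as $\Map_{\An}(\sh{\X_y}, G_y)$. For the source, $\sh{f}$ identifies with the geometric morphism of slice topoi attached to the map of spaces $\sh{\X}\to\sh{\Y}$, and the fact that $\pf{\sh{f}}$ is right Kan extension of spaces yields $\pb{y}\pb{\eta}\pf{\sh{f}}\pb{\sh{f}}G \simeq \Map_{\An}(\sh{\X}\times_{\sh{\Y}}\An, G_y)$, with the fiber product formed along the point of $\sh{\Y}$ determined by $y$. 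The shape quasi-fibration hypothesis (2) is precisely the statement that the canonical comparison $\sh{\X_y}\to\sh{\X}\times_{\sh{\Y}}\An$ is an equivalence in $\Pro(\An)$; by local contractibility of $\X$ and $\Y$ both sides are already spaces. Tracing through the construction of $\alpha$, the map $\pb{y}\alpha_G$ is precisely $\Map(-,G_y)$ applied to this equivalence, hence invertible.

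The main bookkeeping effort lies in this last identification of $\pb{y}\alpha_G$ with the map induced by $\sh{\X_y}\to\sh{\X}\times_{\sh{\Y}}\An$. It unfolds from combining the description of $\alpha$ as the Beck--Chevalley transformation of the commutative square $\eta\circ f \simeq \sh{f}\circ\widetilde{\eta}$ (appearing in the proof of Theorem \ref{thm:shapefibsandpreslocconst}) with proper base change for $f$ and the naturality of $\eta$ recalled in Remark \ref{rmk:etanatural}; no new categorical difficulties arise.
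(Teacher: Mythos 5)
Your proposal is correct and follows essentially the same route as the paper: reduce to (2) $\Rightarrow$ (3), use hypercompleteness of locally constant objects together with the enough-points hypothesis on $\Y\hyp$ to check the transformation (\ref{comppreslocconst}) stalkwise, and then identify its stalk at $y$ via proper base change with the map induced by the comparison $\sh{\X_y}\rightarrow\sh{\An}\times_{\sh{\Y}}\sh{\X}$, which is invertible by assumption (2). The extra bookkeeping you supply (proper base change for the target, Kan extension description for the source) is exactly what the paper leaves implicit in its appeal to the Beck--Chevalley square.
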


\begin{proof}
    By \cref{thm:shapefibsandpreslocconst} we already know that (1) is equivalent to (3), and by \cref{lemma:approxfib=>shapequasifib} we also know that (1) implies (2). Therefore, we are only left to show that (2) implies (3). As observed earlier in the proof of \cref{thm:shapefibsandpreslocconst}, (3) holds if and only if the natural transformation (\ref{comppreslocconst}) is invertible. Since $\Y\hyp$ has enough points, and locally constant objects in $\X$ and $\Y$ are hypercomplete by \cref{Xloccontr=>locconsthyp}, it suffices to show that (\ref{comppreslocconst}) holds after pulling back along each point $\An\xrightarrow{y}\Y$. Denote by $P_y$ the pullback $\sh{\An}\times_{\sh{\Y}}\sh{\X}$, and let $\sh{\X_y}\xrightarrow{c}P_y$ be the comparison map. 
    As in the proof of \cref{thm:shapefibsandpreslocconst} we have to compute that the Beck-Chevalley map of the square
    \begin{equation*}
        \begin{tikzcd}
            \An_{/\X_y} \ar[d, "\pf{\sh{f_y}}"] & \An_{/\X} \ar[l] \ar[d, "\pf{\sh{f}}"] \\
            \An & \An_{/\sh{Y}} \ar[l]
        \end{tikzcd}
    \end{equation*}
    is an equivalence, which again is equivalent to $c \colon \sh{\X_y} \rightarrow P_y$ being an equivalence, concluding the proof.
\end{proof}

\subsection{Approximate fibrations in higher topos theory}

\label{subsec:approximate_fibrations}

Now we come to the definition of an approximate fibrations between $\infty$-topoi.
In this section, internal higher category, as recalled in \cref{subsec:internal_higher_cats} and following, are essential.

\begin{definition}\label{def:approximatefibtopoi}
    Let $\X\xrightarrow{f}\Y$ be any geometric morphism. We say that $f$ is an \textit{approximate fibration} if the $\Y$-functor $\pf{F}\pb{F}$ is internally  corepresentable. When $f$ is an approximate fibration, we write $\pfs{f}(1_{\X})$ for the object of $\Y$ corepresenting the $\Y$-functor $\pf{F}\pb{F}$.
\end{definition}

Explicitly, we ask that $F_*F^* \simeq \Map_{\Omega_{\Y}}(\pfs{f}(1_{\X}),-)$ as $\Y$-functors. By the Yoneda lemma for internal higher categories, prescribing a natural transformation $\Map_{\Omega_{\Y}}(\pfs{f}(1_{\X}),-) \rightarrow F_* F^*$ is equivalent to prescribing a global section  $ 1_{\Y} \xrightarrow{s} \pf{f} \pb{f} \pfs{f}(1_{\X})$. We say that $s$ \textit{exhibits $\pfs{f}(1_{\X})$ as corepresenting object for the approximate fibration $f$} if the induced map $\Map_{\Omega_{\Y}}(\pfs{f}(1_{\X}),-) \rightarrow F_* F^*$ is an equivalence.

\begin{example}
    \label{ex:cell_like_are_approx_fib}
    Let $\X\xrightarrow{p}\Y$ be a cell-like geometric morphism. Then the equivalence $1_{\Y} \simeq \pf{p}\pb{p} 1_{\Y}$ exhibits $1_{\Y}$ as a corepresenting object for the approximate fibration $p$. More generally, if $\X \xrightarrow{p} \Y$ is cell-like and $\Y \xrightarrow{f} \Z$ an approximate fibration, then $fp$ is an approximate fibration. Indeed, in this case $\pf{F} \pf{P} \pb{P} \pb{F} \simeq \pf{F} \pb{F}$.
\end{example}

\begin{example}
    \label{ex:locally_contractible_approx}
    Every locally contractible geometric morphism $f \colon \X \rightarrow \Y$ is an approximate fibration.
    Indeed, the $\Y$-functor $\pf{F} \pb{F}$ has a left adjoint $\pfs{F} \pb{F}$, and we compute
    \[ \pf{F} \pb{F} \simeq \Map_{\Omega_{\Y}}(1_{\Y}, \pf{F} \pb{F}(-)) \simeq \Map_{\Omega_{\Y}}(\pfs{F} \pb{F}(1_{\Y}), -) \simeq \Map_{\Omega_{\Y}}(\pfs{F}(1_{\X}),-).  \]
    So, if $\pfs{f}$ denotes the left adjoint to $\pb{f}$, the corepresenting object is $\pfs{f}(1_{\X})$, and the section exhibiting it corepreseniting $\pf{F} \pb{F}$ is
    \[  1_{\Y} \simeq \pf{f} 1_{\X} \xrightarrow{\mathrm{unit}} \pf{f} \pb{f} \pfs{f} 1_{\X}. \]
    This in particular applies to \'etale geometric morphisms.
\end{example}

\begin{remark}\label{rmk:propapproxfibtopoilocconstrelshape}
    Suppose that $\X\xrightarrow{f}\Y$ is an approximate fibration, and moreover assume that $f$ is a proper geometric morphism. Then the corepresenting object $\pfs{f}(1_{\X})$ for $\pf{f}\pb{f}$ is locally constant with compact values. Indeed, since $f$ is proper, the $\Y$-functor $\pf{F}\pb{F}$ preserves internally filtered colimits (\cite{Martini2023a}). Therefore, $\pfs{f}(1_{\X})$ has to be internally compact, and by \cref{prop:charinternallycpt=locconstcptvalue}, also locally constant with compact values.
\end{remark}

For proper geometric morphisms $f \colon \X \rightarrow \Y$ between locally contractible topoi, we now show that the notion of an approximate fibration agrees with the notion of a shape fibration from \cref{subsec:shape_fibrations}. The basic idea is to use the commuting diagram of topoi
\begin{equation*}
    \begin{tikzcd}
        \X \ar[r, "\eta"] \ar[d, "f"] & \An_{/\sh{\X}} \ar[d, "\sh{f}"]\\
        \Y \ar[r, "\eta"] & \An_{/\sh{\Y}}
    \end{tikzcd}
\end{equation*}
and note that the right-down composite is an approximate fibration as a locally contractible geometric morphism.
In other words, $\eta f$ is an approximate fibration, and our strategy is to use this to deduce that $f$ is an approximate fibration.
This needs some preparation, starting with the construction of a candidate for a corepresenting object $f_\sharp(1)$.

\begin{construction}
Let $f \colon \X \rightarrow \Y$ be a geometric morphism between locally contractible topoi.
We construct a morphism in $\sh{\Y}$ as the composite
\begin{align}\label{globseclocsys}
 1_{\sh{\Y}}\simeq\pf{\sh{f}}1_{\sh{\X}} & \xrightarrow{\text{unit}}\pf{\sh{f}}\pb{\sh{f}}\pfp{\sh{f}}1_{\sh{\X}} \\
    & \xrightarrow[\simeq]{\text{unit}} \pf{\sh{f}}\pf{\eta}\pb{\eta}\pb{\sh{f}}\pfp{\sh{f}}1_{\sh{\X}} \\
    & \xrightarrow[\simeq]{}\pf{\eta}\pf{f}\pb{f}\pb{\eta}\pfp{\sh{f}}1_{\sh{\X}}.
    \end{align}
By adjunction, this gives
\begin{equation}\label{globsectoshapef}
1_{\Y}\simeq\pb{\eta}1_{\sh{\Y}}\rightarrow\pf{f}\pb{f}\pb{\eta}\pfp{\sh{f}}1_{\sh{\X}}.
\end{equation}
Using the Yoneda lemma for $\Y$-categories (see \cite{martini2021yoneda}), the section (\ref{globsectoshapef}) gives rise to a transformation of $\Y$-functors 
\begin{equation}\label{shapecoreptransf}
    \Map_{\Omega_{\Y}}(\pb{\eta}\pfp{\sh{f}}1_{\sh{\X}},-)\rightarrow\pf{F}\pb{F}
\end{equation}
in $\Fun_{\Y}(\Omega_{\Y},\Omega_{\Y})$. 
\end{construction}

\begin{remark}\label{rmk:induced_transformation_on_shape_topoi}
    We apply the $2$-functor $\pf{\eta} \colon \Cat_{\Y} \rightarrow \Cat_{\An_{\sh{\Y}}}$ to \ref{shapecoreptransf} to get a natural transformation
    \begin{equation}\label{pfshapecoreptransf}
    \pf{\eta}(\Map_{\Omega_{\Y}}(\pb{\eta}\pfp{\sh{f}}1_{\sh{\X}},-))\rightarrow \pf{\eta}(\pf{F}\pb{F})
    \end{equation}
    in $\Fun_{\An_{/\sh{\Y}}}(\pf{\eta} \Omega_\Y, \pf{\eta}\Omega_\Y)$.
    Viewing $\eta$ as a geometric morphism of $\An_{\sh{\Y}}$-topoi $\boldsymbol{\eta} \colon \pf{\eta} \Omega_{\Y} \rightarrow \Omega_{\An_{\sh{\Y}}}$, we can precompose \ref{pfshapecoreptransf} with $\pb{\boldsymbol{\eta}}$ and postcompose it with $\pf{\boldsymbol{\eta}}$. Fully faithfulness of $\pb{\boldsymbol{\eta}}$ and \cref{lemma:shomlocconstislocconst} imply that $\pf{\boldsymbol{\eta}} \circ \pf{\eta}(\Map_{\Omega_{\Y}}(\pb{\eta}\pfp{\sh{f}}1_{\sh{\X}},-)) \circ \pb{\boldsymbol{\eta}}$ agrees with the $\An_{\sh{\Y}}$-functor $\Map_{\Omega_{\An_{/\sh{\Y}}}}(\pfp{\sh{f}}1_{\sh{\X}},-) \in \Fun_{\An_{\sh{\Y}}}(\Omega_{\An_{\sh{\Y}}}, \Omega_{\An_{\sh{\Y}}})$. The composite $\pf{\boldsymbol{\eta}} \circ \pf{\eta}(\pf{F} \pb{F}) \circ \pb{\boldsymbol{\eta}}$ agrees with $\pf{\sh{F}}\pb{\sh{F}}$, by naturality and fully faithfulness of $\eta$. So we have constructed a natural transformation of $\An_{\sh{\Y}}$-functors 
    \begin{equation}
        \label{eq:transformation_on_shape_topoi}
        \Map_{\Omega_{\An_{/\sh{\Y}}}}(\pfp{\sh{f}}1_{\sh{\X}},-) \rightarrow \pf{\sh{F}}\pb{\sh{F}}
    \end{equation}
    which we want to identify using Yoneda's lemma for $\An_{/\sh{\Y}}$-categories. Evaluating it on the identity in $\Map_{\Omega_{\An_{/\sh{\Y}}}}(\pfp{\sh{f}}1_{\sh{\X}},\pfp{\sh{f}}1_{\sh{\X}})$ we get a section of $\pf{\sh{f}} \pb{\sh{f}} \pfp{\sh{f}}1_{\sh{\X}}$. Unraveling, this section is obtained by applying \ref{pfshapecoreptransf} to the identity of $\pb{\eta} \pfp{\sh{f}}1_{\sh{\X}}$, and using the identification $\pf{\eta} \pf{f} \pb{f} \pb{\eta} \simeq \pf{\sh{f}} \pf{\eta} \pb{\eta} \pb{\sh{f}}\simeq \pf{\sh{f}} \pb{\sh{f}}$. But by construction, this is the section \ref{globseclocsys}.
\end{remark}

\begin{remark}\label{restrglobsectoshapef}
    The Yoneda lemma for $\An_{/\sh{Y}}$-categories implies that the map
    $$1_{\sh{\Y}}\simeq\pf{\sh{f}}1_{\sh{\X}} \xrightarrow{\text{unit}}\pf{\sh{f}}\pb{\sh{f}}\pfp{\sh{f}}1_{\sh{\X}}$$
    induces a natural transformation of $\An_{/\sh{Y}}$-functors
    \begin{equation}
        \label{eq:natural_transformation_on_shapes_which_is_evidently_equiv}
        \Map_{\Omega_{\An_{/\sh{\Y}}}}(\pfp{\sh{f}}1_{\sh{\X}},-)  \simeq \Map_{\Omega_{\An_{/\sh{\Y}}}}(\pfp{\sh{f}} \pb{\sh{f}}1_{\sh{\Y}},-)\rightarrow\pf{\sh{F}}\pb{\sh{F}},
    \end{equation}
    which is invertible, see \cref{ex:locally_contractible_approx}. 
    In \cref{rmk:induced_transformation_on_shape_topoi} we have seen how this natural transformation arises from \ref{shapecoreptransf}, essentially by restricting it to locally constant sheaves and postcomposing with $\pf{\eta}$. In particular, this identifies the map 
    \begin{equation*}
        \pf{\eta}\sHom{\Y}{\pb{\eta} \pfp{\sh{f}}1_{\sh{\X}}}{\pb{\eta}(-)} \rightarrow \pf{\eta}\pf{f} \pb{f} \pb{\eta}
    \end{equation*}
    arising from \ref{shapecoreptransf} by taking global sections with the map
    \begin{equation}
        \sHom{\An_{/\sh{\Y}}}{\pfp{\sh{f}}1_{\sh{\X}}}{-} \rightarrow \pf{\sh{f}} \pb{\sh{f}}
    \end{equation}
    arising from \ref{eq:natural_transformation_on_shapes_which_is_evidently_equiv}.
\end{remark}

\begin{theorem}\label{thm:approxfibandshapefib}
    Let $\X\xrightarrow{f}\Y$ be a proper geometric morphism, and suppose that $\X$ and $\Y$ are locally contractible. Then the following are equivalent.
    \begin{enumerate}[(1)]
        \item $f$ is an approximate fibration;
        \item $f$ is a shape fibration.
    \end{enumerate}
    Moreover, in this case, for each $U\in\Y$, the representing object for $f_{/U}$ is the locally constant object $\pb{\eta} \pfp{\sh{f_{/U}}}1_{\sh{\X_{/\pb{f}U}}}$.
\end{theorem}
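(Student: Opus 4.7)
The plan is to analyze the natural transformation
\[ \alpha \colon \Map_{\Omega_{\Y}}(\pb{\eta}\pfp{\sh{f}}1_{\sh{\X}},-) \to \pf{F}\pb{F} \]
built in \ref{shapecoreptransf} and show that it is invertible precisely when $f$ is a shape fibration, in which case $\pb{\eta}\pfp{\sh{f}}1_{\sh{\X}}$ corepresents $\pf{F}\pb{F}$. The \emph{moreover} clause for $f_{/U}$ will follow by applying the main equivalence to $f_{/U}$, which remains a proper geometric morphism between locally contractible topoi since both properties are stable under \'etale base change.

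For the direction $(1)\Rightarrow(2)$, let $W \coloneqq \pfs{f}(1_{\X})$ be the given corepresenting object. Since $f$ is proper, the $\Y$-functor $\pf{F}\pb{F} \simeq \Map_{\Omega_{\Y}}(W,-)$ preserves internally filtered colimits, which forces $W$ to be internally compact, hence locally constant with compact values by \cref{prop:charinternallycpt=locconstcptvalue}. For any locally constant $G\in\Y$, \cref{lemma:shomlocconstislocconst} then shows that $\pf{f}\pb{f}G \simeq \Map_{\Omega_{\Y}}(W,G)$ is locally constant, so $\pf{f}\pb{f}$ preserves locally constant objects, and $f$ is a shape fibration by \cref{thm:shapefibsandpreslocconst}.

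For the direction $(2)\Rightarrow(1)$, I would first verify that $\alpha_G$ is an equivalence whenever $G$ is locally constant. Writing $G = \pb{\eta}G'$, \cref{lemma:shomlocconstislocconst} identifies the left-hand side of $\alpha_G$ with $\pb{\eta}\sHom{\An_{/\sh{\Y}}}{\pfp{\sh{f}}1_{\sh{\X}}}{G'}$. Under the shape fibration hypothesis, \cref{thm:shapefibsandpreslocconst} ensures the right-hand side $\pf{f}\pb{f}\pb{\eta}G'$ is also locally constant. The identifications in \cref{rmk:induced_transformation_on_shape_topoi} and \cref{restrglobsectoshapef} then show that $\pf{\eta}\alpha_G$ agrees with the natural transformation \ref{eq:natural_transformation_on_shapes_which_is_evidently_equiv}, which is invertible since $\sh{f}$ is locally contractible (\cref{ex:locally_contractible_approx}). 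Fully faithfulness of $\pb{\eta}$ restricted to locally constant objects then upgrades this to an equivalence $\alpha_G$ in $\Y$.

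To extend from locally constant inputs to all of $\Omega_{\Y}$, I would invoke the internal compact generation of $\Omega_{\Y}$ (\cref{prop:omegacptlygen}) together with the fact that both sides of $\alpha$ preserve internally filtered colimits: the right-hand side by properness of $f$, the left-hand side provided $\pb{\eta}\pfp{\sh{f}}1_{\sh{\X}}$ is itself internally compact. Establishing this latter internal compactness is the main technical obstacle I expect in the proof. By \cref{rmk:cptvalues&cptstalks} it reduces to showing every homotopy fiber of $\sh{f}\colon\sh{\X}\to\sh{\Y}$ is a compact anima, and my plan is to deduce this by combining \cref{lemma:approxfib=>shapequasifib} with \cref{lem:quasi-fibbtweenconstshap=>constshapefib} to obtain compactness of $\sh{\X_y}$ for every genuine point $y\colon\An\to\Y$ (the hypotheses being met thanks to local contractibility of $\X$ and $\Y$, which forces $\sh{\X}$ and $\sh{\Y}$ to be constant pro-objects), and then to extend compactness to all points of $\sh{\Y}$ via a local-triviality argument on $\Y$.
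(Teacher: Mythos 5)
Your proposal follows essentially the same route as the paper's proof: the same corepresenting candidate $\pb{\eta}\pfp{\sh{f}}1_{\sh{\X}}$ and transformation \ref{shapecoreptransf}, the same reduction to internally compact (locally constant) objects via \cref{prop:omegacptlygen} and properness, and the same verification on locally constant objects using \cref{lemma:shomlocconstislocconst}, \cref{restrglobsectoshapef} and full faithfulness of $\pb{\eta}$, with only the \'etale reduction from level $U$ to global sections of $f_{/U}$ left implicit. The ``main technical obstacle'' you isolate --- internal compactness of the candidate via compactness of the fibers of $\sh{f}$ --- is handled in the paper exactly as you propose, by combining \cref{lemma:approxfib=>shapequasifib}, \cref{lem:quasi-fibbtweenconstshap=>constshapefib} and \cref{rmk:cptvalues&cptstalks}.
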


\begin{proof}
     We start by showing that (1) implies (2). By \cref{thm:shapefibsandpreslocconst}, it suffices to show that $\pf{f}\pb{f}$ preserves locally constant objects. Let $\pfs{f}1_{\X}$ be the internal corepresenting object for $\pf{f}\pb{f}$. Since $f$ is proper, $\pfs{f}1_{\X}$ is locally constant by \cref{rmk:propapproxfibtopoilocconstrelshape}. We can then conclude by observing that $\sHom{\Y}{\pfs{f}1_{\X}}{G}\simeq \pf{f}\pb{f}G$ is locally constant whenever $G$ is, by \cref{lemma:shomlocconstislocconst}.

     We show that (2) implies (1). 
     We want to show that the transformation (\ref{shapecoreptransf}) is a natural isomorphism of $\Y$-functors. Notice that it suffices to show that (\ref{shapecoreptransf}) is invertible after restricting to the $\Y$-category of locally constant objects. Indeed, since (2) holds, by \cref{thm:quasifibsandpreslocconst} and \cref{lemma:approxfib=>shapequasifib}, $f$ is a shape quasi-fibration. Moreover, for any point $y$ in $\Y$, the shape of $\X_y$ is compact by \cref{lem:quasi-fibbtweenconstshap=>constshapefib}. Therefore,  $\pb{\eta}\pfp{\sh{f}}1_\X$ is locally constant with compact values by \cref{rmk:cptvalues&cptstalks}, and therefore internally compact by \cref{prop:charinternallycpt=locconstcptvalue}. Thus, we deduce that the $\Y$-functor $\Map_{\Omega_{\Y}}(\pb{\eta}\pfp{\sh{f}}1_{\sh{\X}},-)$ preserves internally filtered colimits. As a consequence, we find that (\ref{shapecoreptransf}) is invertible if and only if its restriction to internally compact objects is invertible. Our claim then follows from \cref{prop:charinternallycpt=locconstcptvalue}.

     We now observe that one may further reduce to proving that the restriction of (\ref{shapecoreptransf}) to locally constant objects is invertible after passing to global sections. In other words, we now show that it suffices to prove that the natural transformation 
     \begin{equation}\label{globsecshapecoreptransf}
     	\sHom{\Y}{\pb{\eta}\pfp{\sh{f}}1_{\sh{\X}}}{\pb{\eta}(-)}\rightarrow\pf{f}\pb{f}\pb{\eta}
     \end{equation}
     induced by (\ref{shapecoreptransf}) is invertible. Let $U\in\Y$ be any object, and denote by $\pi_U:\Y_{/U}\rightarrow\Y$ the associated \'etale geometric morphism. Proving invertibility of (\ref{shapecoreptransf}) amounts to checking that, for each $U\in\Y$, the map
     \begin{equation}\label{shapecoreptransfU}
     	\sHom{\Y_{/U}}{\pb{\pi_U}\pb{\eta}\pfp{\sh{f}}1_{\sh{\X}}}{\pb{\eta}(-)}\rightarrow\pf{{f_{/U}}}\pb{{f_{/U}}}\pb{\eta}
     \end{equation}
     is invertible. Observe that by assumption (2) and naturality of the functor $\sh{(-)}$ (\cref{rmk:etanatural}), base change for left Kan extension provides an isomorphism \[\pb{\pi_U}\pb{\eta}\pfp{\sh{f}}1_{\sh{\X}} \simeq \pb{\eta} \pb{\pi_\sh{\Y_{/U}}} \sh{f_{/U}}_! 1_{\sh{\X}_{/\pb{f}U}} \simeq\pb{\eta}\pfp{\sh{{f_{/U}}}}1_{\sh{\X_{/\pb{f}U}}}.\] Therefore we see that (\ref{shapecoreptransfU}) is invertible if and only if 
     $$\sHom{\Y_{/U}}{\pb{\eta}\pfp{\sh{{f_{/U}}}}1_{\sh{\X_{/\pb{f}U}}}}{\pb{\eta}(-)}\rightarrow\pf{{f_{/U}}}\pb{{f_{/U}}}\pb{\eta}$$
     is invertible. By \cref{rmk:shape(quasi)fibstableunderetalebasechange}, the geometric morphism $f_{/U}$ is a shape fibration whenever $f$ is, and so our claim is proven.

     We now prove that (\ref{globsecshapecoreptransf}) is invertible if $f$ is a shape fibration. By naturality of counits, we have a commuting square 
    $$
    \begin{tikzcd}
\pb{\eta}\pf{\eta}\sHom{\Y}{\pb{\eta}\pfp{\sh{f}}1_{\sh{\X}}}{\pb{\eta}(-)} \arrow[d] \arrow[rr] &  & \pb{\eta}\pf{\eta}\pf{f}\pb{f}\pb{\eta} \arrow[d] \\
\sHom{\Y}{\pb{\eta}\pfp{\sh{f}}1_{\sh{\X}}}{\pb{\eta}(-)} \arrow[rr]                             &  & \pf{f}\pb{f}\pb{\eta}                   
\end{tikzcd}
    $$
    where the lower horizontal arrow is the transformation (\ref{shapecoreptransf}). The left vertical arrow is invertible by \cref{lemma:shomlocconstislocconst}, the upper horizontal arrow is invertible by \cref{restrglobsectoshapef}, and the right vertical arrow is invertible \cref{thm:shapefibsandpreslocconst}, using assumption (2). Therefore, we obtain that the lower horizontal arrow is invertible, and so our proof is concluded.
\end{proof}

\begin{corollary} 
    \label{cor:approximate_fibrations_characterization}
    Let $\X\xrightarrow{f}\Y$ be a proper geometric morphism, and assume that $\X$ and $\Y$ are locally contractible $\infty$-topoi. Assume moreover that $\Y\hyp$ has enough points. Then the following conditions are equivalent.
    \begin{enumerate}[(1)]
        \item $f$ is an approximate fibration;
        \item $\pf{f}\pb{f}$ preserves locally constant objects;
        \item $f$ is a shape fibration;
        \item $f$ is a shape quasi-fibration.
    \end{enumerate}
\end{corollary}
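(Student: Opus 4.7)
The plan is to observe that this corollary is a direct assembly of the main theorems established earlier in the section, so no new argument is required beyond citing them in the right order. All four conditions have already been pairwise related under subsets of the present hypotheses, and the hypotheses of the corollary are precisely strong enough to allow all three comparison theorems to be invoked simultaneously.

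First, I would apply \cref{thm:quasifibsandpreslocconst}: it requires exactly that $f$ be proper, that $\X$ and $\Y$ be locally contractible, and that $\Y^{\hyp}$ have enough points — which are the standing hypotheses of this corollary. That theorem yields the equivalence of (2), (3), and (4) at once.

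Second, I would invoke \cref{thm:approxfibandshapefib}, whose hypotheses (proper $f$, locally contractible $\X$ and $\Y$) are a subset of ours, to obtain the equivalence of (1) and (3). Combining with the previous step then gives the four-way equivalence. There is no main obstacle here since all the work has been done: \cref{thm:shapefibsandpreslocconst} is already absorbed into \cref{thm:quasifibsandpreslocconst}, and \cref{thm:approxfibandshapefib} is precisely the bridge between the internal corepresentability condition defining approximate fibrations and the external shape-theoretic condition. Thus the proof reduces to a one-line citation of these two theorems.
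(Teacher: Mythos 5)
Your proposal is correct and matches the paper's own proof exactly: the corollary is deduced by citing \cref{thm:quasifibsandpreslocconst} for the equivalence of (2), (3), (4) and \cref{thm:approxfibandshapefib} for the equivalence of (1) and (3). Nothing further is needed.
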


\begin{proof}
    This is an immediate consequence of \cref{thm:approxfibandshapefib} and \cref{thm:quasifibsandpreslocconst}.
\end{proof}

\begin{remark}\label{rmk:cell-likefromapproxfibloccontr}
    Note that \cref{cor:cell_like_characterization} is a consequence of \cref{thm:approxfibandshapefib} and \cref{cor:approximate_fibrations_characterization}
    in the case where $\X \xrightarrow{f} \Y$ is a map of locally contractible $\infty$-topoi. Indeed, if $f$ is cell-like then it is an approximate fibration with corepresenting object $1_\Y$ by \cref{ex:cell_like_are_approx_fib}. Hence, it is a shape fibration, and for each $U  \in \Y$ we have $\pfp{\sh{f}} 1_{\sh{\X_{/\pb{f}U}}} \simeq 1_{\sh{\Y_{/U}}} $. This implies that $\sh{f_{/U}} \colon \sh{\X_{/f^* U}} \rightarrow \sh{Y_{/U}}$ is an equivalence. Conversely, hereditary shape equivalences are clearly shape fibrations, and the same computation forces the internal corepresenting object to be $1_{\Y}$ in that case, so that $f_* f^*$ is the identity on $\Y$. If $\Y^{\hyp}$ has enough points, then under the equivalence of (3) and (4) in \cref{cor:approximate_fibrations_characterization}, hereditary shape equivalences correspond to pointwise shape equivalences.
    However, \cref{cor:cell_like_characterization} is strictly more general as it also applies to topoi which are not locally contractible, e.g. sheaf topoi of profinite topological spaces.
\end{remark}

\subsection{Approximate fibrations in topology}

\label{sec:approximate_fibrations}

We now recall the notion of approximate fibrations in topology, introduced by Coram and Duvall in \cite{CoramDuvall}, and subsequently put it in the context of higher topos theory.

For terminology, suppose $Y$ is a topological space and $\epsilon = \{ U_\alpha \}$ is an open cover of $Y$. We say that two maps $f, g \colon Z \rightarrow Y$ are \textit{$\epsilon$-close} if for each $z \in Z$ there is an $\alpha$ with both $f(z)$ and $g(z)$ lying in $U_\alpha$. The suggestive example to keep in mind is if $Y$ is a metric space and $\epsilon$ is a positive real number, we can associate to it the open cover of balls of radius $\epsilon$.

\begin{definition}[\cite{CoramDuvall}]
    Let $p \colon X \rightarrow Y$ be a proper map of ANRs. Then $p$ is called an approximate fibration if for every square
    \begin{equation*}
        \begin{tikzcd}
            Z \times \{0\} \ar[r, "l"] \ar[d] & X \ar[d, "p"] \\
            Z \times [0,1] \ar[r, "H"] & Y
        \end{tikzcd}
    \end{equation*}
    and every open cover $\epsilon = \{ U_\alpha \}$ of $Y$, there exists a map $H' \colon Z \times [0,1] \rightarrow Y$ which is $\epsilon$-close to $H$, agrees with $H$ on $Z \times \{0\}$, and admits a diagonal lift $L \colon Z \times [0,1] \rightarrow X$ with $pL = H$ which agrees with $l$ on $Z \times \{0\}$.
\end{definition}

We now give a reformulation of the notion of an approximate fibration, which is due to \cite{HughesTaylorWilliams}. In the following, we write $\Pi_{\infty}$ to denote the weak homotopy type of a topological space.

\begin{theorem}\label{thm:hughestaylorwilliams}
    Let $f \colon X \rightarrow Y$ be a proper map of locally compact ANR's. Then the following are equivalent.
    \begin{enumerate}[(1)]
        \item The map $f$ is an approximate fibration.
        \item For each open subset $U \subset Y$, the square
        \begin{equation*}
            \begin{tikzcd}
                \Pi_{\infty}(f^{-1}(U)) \ar[r] \ar[d] & \Pi_{\infty}(X) \ar[d,"f"] \\
                \Pi_{\infty}(U) \ar[r] & \Pi_{\infty}(Y)
            \end{tikzcd}
        \end{equation*}
        is a pullback in $\An$.
    \end{enumerate}
\end{theorem}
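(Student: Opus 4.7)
The plan is to establish the equivalence by translating between the metric/lifting language of the classical approximate fibration definition and the homotopical pullback condition via explicit path-space constructions, exploiting the ANR hypothesis to pass between local and global data.

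For the direction $(1) \Rightarrow (2)$, fix an open $U \subset Y$ and consider the canonical comparison map
\[ c \colon \Pi_\infty(f^{-1}(U)) \longrightarrow \Pi_\infty(U) \times^h_{\Pi_\infty(Y)} \Pi_\infty(X). \]
I would show $c$ is a weak equivalence by analysing it on homotopy groups. A class in the target is represented by a triple $(x, \alpha, \gamma)$ with $\alpha \colon S^n \to U$, $x \colon S^n \to X$, and $\gamma \colon S^n \times I \to Y$ a homotopy from $f \circ x$ to $\iota \circ \alpha$. Applying the approximate lifting property to $\gamma$ with initial lift $x$, together with a cover of $Y$ concentrated around the compact set $\alpha(S^n) \subset U$, produces a map $\tilde\gamma \colon S^n \times I \to X$ with $\tilde\gamma_1$ landing in $f^{-1}(U)$, which furnishes the desired preimage class. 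Injectivity is treated symmetrically by applying the approximate lifting property to a homotopy of homotopies, with covers chosen fine enough to stay inside $U$.

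For the direction $(2) \Rightarrow (1)$, consider a lifting square $(l, H) \colon (Z \times \{0\}, Z \times I) \to (X, Y)$ and cover $\epsilon = \{U_\alpha\}$. Refine to a locally finite open cover $\{V_\beta\}$ of $Z \times I$ so that each $H(V_\beta)$ is contained in some $U_{\alpha(\beta)}$. Hypothesis (2) applied to each $U_\alpha$ yields a weak equivalence $\Pi_\infty(f^{-1}(U_\alpha)) \xrightarrow{\sim} \Pi_\infty(U_\alpha) \times^h_{\Pi_\infty(Y)} \Pi_\infty(X)$. Using the initial lift $l$ as boundary data, this supplies partial lifts over each $V_\beta$ of an $U_{\alpha(\beta)}$-close modification of $H|_{V_\beta}$. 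The ANR property of $X$ (specifically Milnor's theorem on neighbourhood domination by a CW complex, and the homotopy extension property) then lets one patch these local lifts inductively over a triangulation of $Z \times I$, producing the global lift $L$ and the $\epsilon$-perturbed homotopy $H'$ simultaneously.

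The main obstacle will be the gluing step in $(2) \Rightarrow (1)$: transitioning from local lifts guaranteed abstractly by the homotopy pullback condition to a globally defined continuous lift while controlling the perturbation by $\epsilon$. This requires careful bookkeeping so that errors from overlapping patches of $\{V_\beta\}$ do not accumulate beyond $\epsilon$, which is where the ANR hypothesis on both $X$ and $Y$ is crucial, providing local contractibility and allowing averaging of lifts along short paths. A secondary technical point is uniformity of the chosen covers over the compact fibres of the proper map $f$, which is where properness and local compactness enter; one may wish to invoke Mardesic--Segal-style ANR inverse-system presentations of shape to make the passage from homotopy-pullback data to genuine maps sufficiently rigid.
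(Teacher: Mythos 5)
Your proposal takes a fundamentally different route from the paper: the paper does not reprove this equivalence at all, but observes that a square in $\An$ is a pullback precisely when it induces equivalences on all fibers, and then cites Theorems 12.13 and 12.15 of Hughes--Taylor--Williams, which together contain exactly the equivalence between the approximate homotopy lifting property and the fiberwise (hence pullback) condition. You are therefore attempting to reprove the cited results from scratch, and the attempt has a genuine gap precisely where those theorems do their real work.

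Concretely, the direction $(2)\Rightarrow(1)$ is not established. First, the approximate homotopy lifting property quantifies over \emph{arbitrary} topological spaces $Z$, so ``a triangulation of $Z\times I$'' does not exist in general; one must first reduce to polyhedral (or cellular) $Z$, which is itself a nontrivial theorem of Coram--Duvall for maps of ANRs, and you do not address this reduction. Second, and more seriously, the step ``Hypothesis (2) applied to each $U_\alpha$ \ldots\ supplies partial lifts over each $V_\beta$'' is unjustified: a weak equivalence $\Pi_\infty(f^{-1}(U_\alpha))\simeq \Pi_\infty(U_\alpha)\times^h_{\Pi_\infty(Y)}\Pi_\infty(X)$ only controls homotopy classes of maps out of CW complexes, and converting this into actual continuous partial lifts with prescribed values on the boundary of a handle, coherently across overlaps and with error controlled by $\epsilon$, is the entire content of the Hughes--Taylor--Williams argument (which proceeds through a careful controlled-topology induction, not through ``averaging of lifts along short paths''). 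You correctly identify this gluing step as ``the main obstacle,'' but identifying the obstacle is not the same as overcoming it; as written the proof of $(2)\Rightarrow(1)$ is a restatement of the difficulty rather than a solution. The direction $(1)\Rightarrow(2)$ is plausible in outline (surjectivity on homotopy groups via an approximate lift of $\gamma$ with $\epsilon$ subordinate to a neighborhood of the compact set $\alpha(S^n)$ inside $U$, injectivity via lifting a two-parameter homotopy), though even there you should say explicitly that $\epsilon$-close maps into an ANR are homotopic through small homotopies, which is what lets you replace $H'$ by $H$ and keep the endpoint homotopies inside $U$. If you want a complete argument, either carry out the Hughes--Taylor--Williams induction in detail or, as the paper does, cite it.
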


\begin{proof}
    Recall that a square
    \begin{equation*}
            \begin{tikzcd}
                E' \ar[r] \ar[d, "q"] & E \ar[d,"p"] \\
                B' \ar[r, "s"] & B
            \end{tikzcd}
        \end{equation*}
    in $\An$ is a pullback if and only if, for each $x\in B'$, the natural map from the fiber of $q$ at $x$ to the fiber of $p$ at $s(x)$ is invertible. Therefore, we see that the condition appearing in \cite[Theorem 12.15]{HughesTaylorWilliams} is equivalent to condition (2) in our statement. Hence, the proof is concluded by combining \cite[Theorem 12.13]{HughesTaylorWilliams} and \cite[Theorem 12.15]{HughesTaylorWilliams}.
\end{proof}

\begin{theorem}
    \label{thm:characterization_of_approximate_fibrations}
    Let $f \colon X \rightarrow Y$ be a proper map of locally compact ANRs. Then the following are equivalent.
    \begin{enumerate}[(1)]
        \item The map $f$ is an approximate fibration.
        \item The geometric morphism $f:\Shsp{X}\rightarrow\Shsp{Y}$ is an approximate fibration.
    \end{enumerate}
\end{theorem}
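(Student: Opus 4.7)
The plan is to reduce both conditions to the same concrete pullback statement on open subsets, by routing through the characterization of approximate fibrations as shape fibrations established in the previous subsection. The key ingredients are \cref{cor:approximate_fibrations_characterization}, \cref{cor:shapefibontopspace}, the shape-theoretic description of ANRs in \cref{ex:ANRs}, and the Hughes--Taylor--Williams reformulation recalled in \cref{thm:hughestaylorwilliams}.

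First, I would verify that the hypotheses of \cref{cor:approximate_fibrations_characterization} are satisfied. Because $X$ and $Y$ are locally compact ANRs, \cref{ex:ANRs} implies that $\Shsp{X}$ and $\Shsp{Y}$ are locally contractible. The hypercompletion $\Shsp{Y}\hyp$ has enough points by \cref{rmk:Xhypenoughpoints}, and properness of $f$ as a continuous map of locally compact Hausdorff spaces gives properness of the induced geometric morphism $f \colon \Shsp{X} \rightarrow \Shsp{Y}$. Hence \cref{cor:approximate_fibrations_characterization} translates condition (2) of the theorem into: the geometric morphism $f$ is a shape fibration.

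Next, I would apply \cref{cor:shapefibontopspace} to reduce the shape-fibration condition to checking on open subsets $U \subseteq Y$, where it asserts that the square
\[ \begin{tikzcd} \sh{\Shsp{X}_{/\pb{f}U}} \ar[r] \ar[d] & \sh{\Shsp{X}} \ar[d] \\ \sh{U} \ar[r] & \sh{Y} \end{tikzcd} \]
is a pullback in $\An$. Identifying $\Shsp{X}_{/\pb{f}U} \simeq \Shsp{f^{-1}(U)}$, this becomes a square of shapes of open subsets of ANRs. Since open subsets of locally compact ANRs are themselves locally compact ANRs, \cref{ex:ANRs} applies at each corner and identifies the shapes with weak homotopy types $\Pi_\infty$. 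The resulting pullback condition is exactly condition (2) of \cref{thm:hughestaylorwilliams}, which characterizes approximate fibrations of topological spaces, so the two conditions in the theorem become equivalent.

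The argument is essentially an assembly of prior results, and no new internal higher category theory is needed at this stage. The point deserving the most care is ensuring that the identification $\Shsp{X}_{/\pb{f}U} \simeq \Shsp{f^{-1}(U)}$ is natural in the open inclusion $U \hookrightarrow Y$ and that the comparison between shape and weak homotopy type is natural at all four corners of the square; both are functorial consequences of \cref{ex:ANRs} and the standard description of étale maps over $\Shsp{Y}$ in terms of open subsets of $Y$.
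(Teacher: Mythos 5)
Your proposal is correct and follows essentially the same route as the paper: reduce condition (2) to being a shape fibration, then use \cref{cor:shapefibontopspace}, the identification $\sh{Z}\simeq\Pi_\infty(Z)$ for locally compact ANRs from \cref{ex:ANRs}, and \cref{thm:hughestaylorwilliams}. The only cosmetic difference is that you invoke \cref{cor:approximate_fibrations_characterization} (and hence verify the enough-points hypothesis via \cref{rmk:Xhypenoughpoints}), whereas the paper cites \cref{thm:approxfibandshapefib} directly, which does not need that hypothesis.
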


\begin{proof}
    By \cref{thm:approxfibandshapefib}, it suffices to show that (1) holds if and only if $f:\Shsp{X}\rightarrow\Shsp{Y}$ is a shape fibration. The latter condition is equivalent to (2) in \cref{thm:hughestaylorwilliams}. Indeed, this is seen by combining \cref{cor:shapefibontopspace} with the fact that, for a locally compact ANR $Z$, there is an equivalence $\sh{Z}\simeq\Pi_{\infty}(Z)$ in $\An$. The proof is thus finished by appealing to \cref{thm:hughestaylorwilliams}.
\end{proof}

\appendix

\section{A formula for stalks}

Given a proper map of paracompact topological spaces $f \colon X \rightarrow Y$, it is well-known that for $y \in Y$, the shape of the fibre $X_y$ can be computed as the limit of the shapes of the subspaces $f^{-1}(U) \subset X$, where $U$ runs through all open neighborhoods $x \in U \subset X$. The purpose of this subsection is to generalise this to an arbitrary proper geometric morphism of topoi $f \colon \X \rightarrow \Y$ with a point $\An \rightarrow \Y$.

Let $C$ be any small $\infty$-category. Let $f \colon C\rightarrow\An$ be any functor, and write $\pi_f  \colon E_f \rightarrow C$ for the left fibration classified by $f$. Denote by $\pfp{f} \colon \PSh{C}\rightarrow\An$ the extension by colimits of $f$. The following lemma provides a convenient formula to describe $\pfp{f}$ in terms of $\pi_f$.

\begin{lemma}\label{lemma:extbycolimformula}
    The functor $\pfp{f}$ is naturally equivalent to the composition
    $$\PSh{C}\xrightarrow{\pb{{\pi_f}}}\PSh{E_f}\xrightarrow{\varinjlim}\An.$$
    More explicitly, for any $F\in\PSh{C}$, we have an isomorphism
    $$\pfp{f}F\simeq\varinjlim\limits_{e\in E_f}F(\pi_f(e))$$
    which is natural in $F$.
\end{lemma}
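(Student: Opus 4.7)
The plan is to verify that both functors $\pfp{f}$ and $\varinjlim \circ \pb{\pi_f}$, viewed as functors $\PSh{C} \to \An$, are cocontinuous and agree on representable presheaves, and then to invoke the universal property of $\PSh{C}$ as the free cocompletion of $C$ (which identifies $\Fun^L(\PSh{C}, \An)$ with $\Fun(C,\An)$ via restriction along Yoneda). Cocontinuity of $\pfp{f}$ is its defining feature. For the composite, $\pb{\pi_f}$ preserves colimits because colimits of presheaves are computed pointwise, and $\varinjlim \colon \PSh{E_f} \to \An$ is cocontinuous as a left adjoint to the constant presheaf functor.

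On a representable $y(c_0)$, the left-hand side evaluates to $f(c_0)$ by the universal property of $\pfp{f}$. For the right-hand side, the plan is to use the equivalence $\PSh{E_f} \simeq \mathrm{RFib}(E_f)$ between presheaves on $E_f$ and right fibrations over it: under this, $\pb{\pi_f}(y(c_0))$ corresponds to the right fibration $E_f \times_C C_{/c_0} \to E_f$, obtained as the base change of the slice $C_{/c_0} \to C$ along $\pi_f$. Since the colimit of a presheaf on $E_f$ coincides with the classifying space of the associated right fibration, the task reduces to computing $|E_f \times_C C_{/c_0}|$. Projecting instead to the second factor, $E_f \times_C C_{/c_0} \to C_{/c_0}$ is a left fibration (base change of $\pi_f$ along the forgetful functor $C_{/c_0} \to C$), classified by $f$ precomposed with this forgetful functor. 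Since $C_{/c_0}$ has a terminal object $\mathrm{id}_{c_0}$, the classifying space of this left fibration is the value of its classifying functor at this terminal object, namely $f(c_0)$.

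Naturality of the identification in $c_0$ is automatic from the functoriality of each construction involved, and the resulting natural equivalence of functors $C \to \An$ extends uniquely to a natural equivalence of their cocontinuous extensions to $\PSh{C}$. I do not foresee serious obstacles in this argument; the main conceptual input is the identification of the colimit of a presheaf with the classifying space of its unstraightening, and the only real care needed is in keeping the variance conventions consistent when translating between presheaves and left or right fibrations.
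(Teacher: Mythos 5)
Your argument is correct, but it is genuinely different from what the paper does: the paper's entire proof is a citation to \cite[Lemma 2.3.4.3]{martini2025presentabilitytopoiinternalhigher}, where the statement is established in the internal/parametrized setting of Martini--Wolf, whereas you give a self-contained classical proof. Your route --- reduce to representables via the free-cocompletion universal property of $\PSh{C}$ (both $\pfp{f}$ and $\varinjlim\circ\pb{\pi_f}$ being cocontinuous), then compute $\varinjlim_{E_f}\pb{\pi_f}y(c_0)$ as the classifying space of $E_f\times_C C_{/c_0}$, exploiting that this object is simultaneously a right fibration over $E_f$ and a left fibration over $C_{/c_0}$ with the latter base having a terminal object --- is a standard and illuminating argument, and it makes the lemma independent of the internal-category machinery, which is arguably a gain for a reader of the appendix. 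The only point I would tighten is the last step: in the $\infty$-categorical setting one should not produce objectwise equivalences and then assert naturality, but rather first construct the comparison transformation (e.g.\ as the canonical map out of the colimit, or by left Kan extending the evident identification $\varinjlim\circ\pb{\pi_f}\circ y\simeq f$, the latter obtained from the finality of the terminal-object inclusion $\{\mathrm{id}_{c_0}\}\hookrightarrow C_{/c_0}$) and only then verify it is an equivalence on representables; as written this is a presentational gap rather than a mathematical one.
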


\begin{proof}
This is a special case of \cite[Lemma 2.3.4.3]{martini2025presentabilitytopoiinternalhigher}. \qedhere%
\end{proof}

Let $\X$ be an $\infty$-topos, and let $\An\xrightarrow{x}\X$ be any point. Suppose that $C$ is a small $\infty$-category and $\PSh{C}\xrightarrow{L}\X$ is a left exact localization with fully faithful right adjoint $\X\xrightarrow{i}\PSh{C}$. Let $C\xrightarrow{f_x}\An$ be the composite
$C\xrightarrow{y}\PSh{C}\xrightarrow{L}\X\xrightarrow{\pb{x}}\An$. Since $x^*L$ commutes with colimits, it is the unique colimit preserving extension of $f_x$.

\begin{definition}\label{def:C-neighbourhoods}
    We define the $\infty$-category of \textit{$C$-neighbourhoods of the point $x$} to be the domain of the left fibration classifying the functor $C\xrightarrow{f_x}\An$. We denote the latter by $N_C(x)\xrightarrow{\pi_x}C$.
\end{definition}

\begin{remark}\label{rmk:N(x)isfilt}
    Notice that the $\infty$-category $N_C(x)$ is filtered. Indeed, the extension by colimits $(f_x)_! \colon \PSh{C} \rightarrow \An$ of the functor $f_x$ coincides with the composition $\pb{x}L$. Since the latter is left exact, our claim follows from \cite[Proposition 3.2.8.1]{martini2025presentabilitytopoiinternalhigher}.
\end{remark}

\begin{remark}
    Notice that there is an isomorphism
    $$\pb{x}(-)\simeq\Hom{\Top}{\An}{\An_{/\pb{x}(-)}}$$
    in $\Fun(\X,\An)$ (this is a very special case of \cite[Corollary 6.3.5.6]{lurie2009higher}). Moreover, by \cite[Remark 6.3.5.8]{lurie2009higher}, there is a pullback square in $\Fun(\X,\Top)$ of the form
    $$
    \begin{tikzcd}
\An_{/\pb{x}(-)} \arrow[d] \arrow[r] & \X_{/(-)} \arrow[d] \\
\An \arrow[r, "x"]                    & \X.                  
\end{tikzcd}
    $$
    As a consequence, we deduce that objects in $N_C(x)$ can be informally described as pairs $(U, x_U)$, where $U$ is an object in $C$ and $x_U$ is the datum of a point $\An\xrightarrow{x_U}\X_{/L(y(U))}$ and a commuting triangle 
    $$
    \begin{tikzcd}
                                     & \X_{/L(y(U))} \arrow[d] \\
\An \arrow[ru, "x_U"] \arrow[r, "x"] & \X.                 
\end{tikzcd}
    $$
    This justifies our choice for the name given to $N_C(x)$.
\end{remark}

We are now ready to prove our desired explicit formula for $\pb{x}$.

\begin{proposition}\label{prop:formulastalks}
    Let $F$ be any object in $\X$. Then we have an isomorphism
    $$\pb{x}F\simeq\varinjlim\limits_{(U,x_U)\in N_C(x)}\Hom{\X}{L(U)}{F}$$
    which is natural in $F$.
\end{proposition}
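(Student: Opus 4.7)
The plan is to reduce the computation of $\pb{x}F$ to the computation of the colimit extension $\pfp{f_x}$ applied to the presheaf $iF$, which is handled by \cref{lemma:extbycolimformula}.

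First, I would argue that $\pb{x}L : \PSh{C} \rightarrow \An$ is colimit-preserving, as $L$ is a left adjoint and $\pb{x}$ is left adjoint as the inverse image of a geometric morphism. Since $\pb{x}L$ restricts along the Yoneda embedding $y : C \rightarrow \PSh{C}$ to exactly $f_x$, the uniqueness of colimit-preserving extensions gives a natural equivalence $\pb{x}L \simeq \pfp{(f_x)}$.

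Next, given $F\in\X$, full faithfulness of the right adjoint $i$ yields $F \simeq L iF$. Combining the above identification with \cref{lemma:extbycolimformula}, we obtain
\begin{equation*}
    \pb{x}F \simeq \pb{x}L(iF) \simeq \pfp{(f_x)}(iF) \simeq \varinjlim_{(U, x_U) \in N_C(x)} (iF)(\pi_x(U, x_U)) = \varinjlim_{(U, x_U) \in N_C(x)}(iF)(U),
\end{equation*}
all natural in $F$. Finally, the Yoneda lemma identifies $(iF)(U) \simeq \Hom{\PSh{C}}{y(U)}{iF}$, and the adjunction $L \dashv i$ further identifies this with $\Hom{\X}{L(y(U))}{F}$. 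Under the convention $L(U) := L(y(U))$, this produces the desired natural equivalence.

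This argument is almost formal; the only real content is the identification $\pb{x}L \simeq \pfp{(f_x)}$, which is immediate from universality of the Yoneda embedding in $\PSh{C}$ as the free cocompletion of $C$. No genuine obstacle is expected.
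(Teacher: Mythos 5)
Your proposal is correct and follows essentially the same route as the paper: identify $\pb{x}L$ with the colimit extension $\pfp{(f_x)}$ (a fact the paper records just before \cref{def:C-neighbourhoods}), write $F\simeq Li(F)$ using full faithfulness of $i$, apply \cref{lemma:extbycolimformula}, and finish with Yoneda and the adjunction $L\dashv i$. No differences worth noting.
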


\begin{proof}
    We have
    \begin{align*}
        \pb{x}F&\simeq\pb{x}Li(F) \\
        &\simeq\varinjlim\limits_{(U,x_U)\in N_C(x)}\Hom{\PSh{C}}{y(U)}{i(F)} \\
        &\simeq \varinjlim\limits_{(U,x_U)\in N_C(x)}\Hom{\X}{L(y(U))}{F}
    \end{align*}
    where the first isomorphism follows since $i$ is fully faithful, the second by \cref{lemma:extbycolimformula} and the third by adjunction.
\end{proof}

\begin{corollary}\label{cor:shapeoffibpropermorph}
    Let $\Y$ be any $\infty$-topos, and $C$ a small $\infty$-category with a left exact localization $\PSh{C}\xrightarrow{L}\Y$. Let $\X\xrightarrow{f}\Y$ be a proper geometric morphism, and let $\An\xrightarrow{y}\Y$ be any point. Then there is an isomorphism
    $$\sh{\X_y}\simeq\varprojlim\limits_{(U,x_U)\in N_C(x)\op}\sh{\X_{/\pb{f}L(y(U))}}$$
    in $\Pro(\An)$.
\end{corollary}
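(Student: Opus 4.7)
The plan is to identify both sides as objects of $\Pro(\An) \simeq \Fun^{\lex,\mathrm{acc}}(\An,\An)\op$ by computing the corresponding functors $\An \to \An$. For an $\infty$-topos $\Z$, $\sh{\Z}$ corresponds to $(\pi_{\Z})_\ast (\pi_{\Z})^\ast$, so it suffices to check that for all $B \in \An$ the two functors agree naturally.

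First I would compute the functor associated to $\sh{\X_y}$ via proper basechange. Writing the pullback square
$$
\begin{tikzcd}
\X_y \ar[r,"y'"] \ar[d,"f'"] & \X \ar[d,"f"] \\
\An \ar[r,"y"] & \Y
\end{tikzcd}
$$
with $f' = \pi_{\X_y}$, factoring $\pi_{\X_y} = \pi_{\X} \circ y'$, and using $\pi_{\X} = \pi_{\Y} \circ f$ gives $(f')^\ast B \simeq (y')^\ast f^\ast \pi_{\Y}^\ast B$. Since $f$ is proper, proper basechange gives $f'_\ast (y')^\ast \simeq y^\ast f_\ast$, whence
$$
f'_\ast (f')^\ast B \;\simeq\; y^\ast f_\ast f^\ast \pi_{\Y}^\ast B.
$$

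Next I would apply \cref{prop:formulastalks} to the object $F = f_\ast f^\ast \pi_{\Y}^\ast B \in \Y$ and the point $y$, obtaining
$$
y^\ast f_\ast f^\ast \pi_{\Y}^\ast B \;\simeq\; \varinjlim_{(U,x_U)\in N_C(y)} \Hom{\Y}{L(y(U))}{f_\ast f^\ast \pi_{\Y}^\ast B}.
$$
The adjunction $f^\ast \dashv f_\ast$ together with $f^\ast \pi_{\Y}^\ast B \simeq \pi_{\X}^\ast B$ turns the mapping spaces into $\Hom{\X}{f^\ast L(y(U))}{\pi_{\X}^\ast B}$, which is precisely the value at $B$ of the functor corresponding to $\sh{\X_{/\pb{f}L(y(U))}}$, via the standard identification $\sh{\X_{/V}}(B) \simeq \Hom{\X}{V}{\pi_{\X}^\ast B}$.

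Finally I would conclude by turning the filtered colimit of functors back into a cofiltered limit of pro-spaces. Under the equivalence $\Pro(\An) \simeq \Fun^{\lex,\mathrm{acc}}(\An,\An)\op$, a limit in $\Pro(\An)$ over $N_C(y)\op$ corresponds to a colimit over $N_C(y)$ in $\Fun^{\lex,\mathrm{acc}}$; the latter does lie in $\Fun^{\lex,\mathrm{acc}}$ since $N_C(y)$ is filtered (\cref{rmk:N(x)isfilt}) and filtered colimits in $\An$ commute with finite limits and preserve accessibility. The main obstacle is bookkeeping: making sure the opposite category conventions for $\Pro(\An)$ align with the filteredness of $N_C(y)$, and that proper basechange and the formula for $\pb{y}$ are composed in the right order. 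Once these are tracked carefully, the identification of functors is essentially a direct computation.
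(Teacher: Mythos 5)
Your proposal is correct and follows essentially the same route as the paper: identify $\sh{\X_y}$ with the functor $\pf{y}\pf{f}\pb{f}\pb{\pi_{\Y}}$ via proper basechange, expand $\pb{y}$ using \cref{prop:formulastalks}, recognize each term as the corepresenting functor of $\sh{\X_{/\pb{f}L(y(U))}}$, and use filteredness of $N_C(y)$ (\cref{rmk:N(x)isfilt}) to reassemble the colimit of functors as a cofiltered limit in $\Pro(\An)$. The paper's proof is just a terser version of the same computation.
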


\begin{proof}
    Let $\X_y\xrightarrow{a}\An$ and $\X\xrightarrow{b}\An$ be the unique geometric morphisms. Since $f$ is proper, we have a natural isomorphism
    $$\pf{a}\pb{a}\simeq\pb{y}\pf{f}\pb{b}.$$
    The proof is then concluded by observing that the left-hand side is corepresented in $\Pro(\An)$ by $\sh{\X_y}$, while the right-hand side is coprepresented by $\varprojlim\limits_{(U,x_U)\in N_C(x)\op}\sh{\X_{/\pb{f}L(y(U))}}$. The last claim is direct consequence of \cref{prop:formulastalks} and \cref{rmk:N(x)isfilt}.
\end{proof}

\begin{example}
    Let $f \colon X \rightarrow Y$ be a map of topological spaces, and let $y \in Y$ be a point. We can write $\Shsp{Y}$ as a left-exact localization of $\PSh{\mathcal{O}(Y)}$, the category of presheaves on the category of open subsets of $Y$. The category $N_{\mathcal{O}(Y)}(y)$ can be described by considering the composition
    \[ \mathcal{O}(Y) \rightarrow \Shsp{Y} \xrightarrow{x^*} \An \]
    which sends $U \subset Y$ to $1 \in \An$ if $x \in U$ and to $\emptyset$ otherwise. The associated left fibration is the inclusion of the full subcategory $\mathcal{O}(Y)_y \subset \mathcal{O}(Y)$ on such open subsets containing the point $x$.
    Then, provided $\Shsp{X} \rightarrow \Shsp{Y}$ is proper, \cref{cor:shapeoffibpropermorph} predicts that
    \[ \sh{X_y} \simeq \lim_{x \in U \subset Y}\sh{f^{-1}(U)}. \]
\end{example}

\newpage
\nocite{*}
\bibliographystyle{alpha}
\bibliography{approx}
\end{document}